\newcommand{\euscr}[1]{\EuScript{#1}} % Euler script
\newcommand{\ccat}{\euscr{C}} % category C in Euler script 
\newcommand{\dcat}{\euscr{D}} % category D in Euler script
\newcommand{\Fun}{\textnormal{Fun}} % functor category
\newcommand{\Hom}{\textnormal{Hom}} % homomorphims 
\newcommand{\Ext}{\textnormal{Ext}} % Ext 
\newcommand{\map}{\textnormal{map}} % mapping space
\newcommand{\spaces}{\euscr{S}} % the category of spaces
\newcommand{\spectra}{\euscr{S}p} % the category of spectra
\newcommand{\synspectra}{\euscr{S}yn} % synthetic spectra
\newcommand{\ComodE}{\euscr{C}omod_{E_{*}E}} % E_*E-comodules
\newcommand{\Mod}{\euscr{M}od} % module infinity-category
\newcommand{\monunit}{\mathbbm{1}} % the monoidal unit
\newcommand{\monunitt}[1]{\monunit_{\leq #1}} %the truncated monoital unit
\theoremstyle{plain}
\newtheorem{thm}{Theorem}[section]
\newtheorem{lemma}[thm]{Lemma}
\newtheorem{prop}[thm]{Proposition}
\newtheorem{cor}[thm]{Corollary}
\newtheorem*{thm*}{Theorem}
\theoremstyle{definition}
\newtheorem{example}[thm]{Example}
\newtheorem{warning}[thm]{Warning}
\newtheorem{defin}[thm]{Definition}
\newtheorem{rem}[thm]{Remark}
\newtheorem{notation}[thm]{Notation}
\newtheorem{assumption}[thm]{Assumption}
\newtheorem*{rem*}{Remark}
\newtheorem*{interpretation*}{Interpretation}
\newtheorem*{defin*}{Definition}
\newtheorem*{conjecture*}{Conjecture}
\newtheorem*{notation*}{Notation}
\newtheorem*{convention*}{Convention}
\newtheorem*{thm_italics*}{Theorem}
\theoremstyle{remark}
  \def\subsection{\@startsection{subsection}{1}%
  \z@{.7\linespacing\@plus\linespacing}{.5\linespacing}%
  {\normalfont\bfseries\centering}}% NEW
\let\oldtocsection=\tocsection
\let\oldtocsubsection=\tocsubsection
\let\oldtocsubsubsection=\tocsubsubsection
\renewcommand{\tocsection}[2]{\hspace{0em}\oldtocsection{#1}{#2}}
\renewcommand{\tocsubsection}[2]{\hspace{1em}\oldtocsubsection{#1}{#2}}
\renewcommand{\tocsubsubsection}[2]{\hspace{2em}\oldtocsubsubsection{#1}{#2}}
\begin{document}
\title[Chromatic homotopy theory is algebraic when $p > n^{2}+n+1$]{Chromatic homotopy theory is algebraic when $p > n^{2}+n+1$}
\author[Piotr Pstr\k{a}gowski]{Piotr Pstr\k{a}gowski}
\address{Northwestern University}
\email{pstragowski.piotr@gmail.com}

\begin{abstract}
We show that if $E$ is a $p$-local Landweber exact homology theory of height $n$ and $p  > n^2+n+1$, then there exists an equivalence $h\spectra_{E} \simeq h\dcat(E_{*}E)$ between homotopy categories of $E$-local spectra and differential $E_{*}E$-comodules, generalizing Bousfield's and Franke's results to heights $n > 1$. 
\end{abstract}

\maketitle 

\tableofcontents

% the beginning sections do not go into the table of contents 
% \addtocontents{toc}{\protect\setcounter{tocdepth}{-1}}

\section{Introduction}

In chromatic homotopy theory one studies the connection between stable homotopy and the theory of formal groups. Locally at a prime $p$, the moduli stack of formal groups admits a filtration by open substacks classifying formal groups of height at most $n$; in homotopy theory this is reflected by a sequence of localization functors $L_{n}: \spectra \rightarrow \spectra$, where $L_{n}$ is the Bousfield localization with respect to a $p$-local Landweber exact homology theory of height $n$. 

As a consequence of the chromatic convergence theorem of Ravenel and Hopkins, if $X$ is a finite spectrum, then $\varprojlim L_{n} X$ is equivalent to the $p$-localization $X_{(p)}$ \cite{ravenel2016nilpotence}. In principle, this implies that $\pi_{*}X$ can be computed from the knowledge of $\pi_{*} L_{n}X$, which is something one can approach inductively using the chromatic fracture squares. 

From a different perspective, chromatic convergence also implies that as height grows, the homotopy groups $\pi_{*} L_{n} X$ must become at least as complicated as $\pi_{*} X$, where by complicated we mean \emph{fundamentally non-algebraic}. This is a consequence of the Mahowald uncertainty principle, which informally states that any algebraic approximation to the homotopy groups of spheres must be infinitely far away from the actual answer \cite{goerss2008adams}. 

On the other hand, if $E$ is a Landweber exact homology theory of small height, then the $E$-local homotopy theory is known to simplify considerably. For example, when $2p - 2 > n^{2}+n$, then the $E$-local Adams-Novikov spectral sequence collapses for degree reasons and induces an isomorphism between $\pi_{*} S^{0}_{E}$ and $\Ext_{E_{*}E}(E_{*}, E_{*})$. 

Similarly, it is a result of Hovey and Sadofsky that under the same assumption, the only invertible $E$-local spectra are the spheres \cite{hovey1999invertible}, in stark contrast to what happens when the prime is small \cite{goerss2014hopkins}, \cite{hovey1999invertible}. 

In this note, we prove a global statement on the algebraicity of chromatic homotopy at large primes. Namely, we show that if $p > n^{2}+n+1$, then the homotopy category of $E$-local spectra is equivalent to the homotopy category of differential $E_{*}E$-comodules. 

\subsection{Statement of results}

Let $E$ be $p$-local multiplicative Landweber exact homology theory, so that $E_{*}$ is a $BP_{*}$-algebra. Following Strickland and Hovey, we say that $E$ is of \emph{height $n$} if $E_{*} / I_{n} \neq 0$ and $E_{*} / I_{n+1} = 0$, where  $I_{n} = (v_{0}, \ldots, v_{n-1})$. For example, $E$ could be the Lubin-Tate spectrum $E(k, \Gamma)$ associated to a formal group $\Gamma / k$ of height $n$, or the Johnson-Wilson theory $E(n)$. 

A \emph{differential $E_{*}E$-comodule} is pair $(M, d)$, where $M$ is an $E_{*}E$-comodule and $d: M \rightarrow M$ is a map of comodules of degree $1$ satisfying $d^{2} = 0$. The collection of differential comodules has a natural notion of weak equivalence given by quasi-isomorphisms, by inverting those we obtain the \emph{homotopy category}, which we will denote by $h\dcat(E_{*}E)$. 

\begin{thm}[\ref{thm:at_large_primes_homotopy_category_of_e_local_spectra_same_as_periodic_chain_complexes}]
\label{thm:intro_homotopy_kcategories_of_elocal_spectra_and_periodic_chain_complexes_equivalent_at_large_primes}
Let $E$ be a $p$-local Landweber exact homology theory of height $n$ and assume that $p > n^{2} + n + 1$. Then, there exists an equivalence $h\spectra_{E} \simeq h\mathcal{D}(E_{*}E)$ between the homotopy categories of $E$-local spectra and differential $E_{*}E$-comodules. 
\end{thm}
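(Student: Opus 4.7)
The plan is to use the \inftycat of $E$-based synthetic spectra $\synspe$ as a deformation interpolating between the two sides. Recall that $\synspe$ is a symmetric monoidal stable \inftycat carrying a distinguished map $\tau$ whose inversion recovers (a compactly generated version of) $\spectra_{E}$, and whose cofiber exhibits the abelian category $\ComodE$ as the heart of a t-structure. Thus $h\synspe$ already sees both of the homotopy categories in question, and the theorem can be reformulated as the assertion that, after inverting $\tau$, the resulting homotopy category agrees with the one obtained from a parallel $\tau$-inversion performed on the algebraic side.

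The critical numerical ingredient is that the hypothesis $p > n^{2}+n+1$ forces $n^{2}+n < 2p-2$, placing the global dimension of $\ComodE$ strictly below half the length of the natural $(2p-2)$-periodicity. In this sparsity range, all potential higher differentials and Toda brackets in $\synspe$ vanish for purely degree-theoretic reasons, yielding a formality statement: Postnikov towers of synthetic spectra split, and a suitable truncated subcategory of $\synspe$ is equivalent to its associated graded. I would establish this by directly analysing the relevant Ext groups $\Ext_{E_{*}E}^{s,t}(E_{*}, E_{*})$ and showing that the obstruction classes live in empty strips carved out by the sparsity bound.

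Given formality, I would then construct a Franke-style comparison functor $\Phi: h\dcat(E_{*}E) \rightarrow h\spectra_{E}$: decompose a differential comodule into $(2p-2)$-periodic blocks, realise each block as the synthetic Postnikov truncation of the underlying comodule, and splice the blocks together using the action of $\tau$. The main obstacle is verifying that $\Phi$ is well-defined on morphisms and essentially surjective --- composability at the homotopy-category level requires a cascade of obstruction classes in shifted Ext groups to vanish, and this is exactly what the hypothesis on $p$ guarantees. The cleanest route is to avoid building $\Phi$ by hand and instead transfer the comparison through $\synspe$, where the formality result converts the problem into a tractable statement about derived categories of periodic comodule complexes; the equivalence of homotopy categories then falls out after $\tau$-inversion on both sides.
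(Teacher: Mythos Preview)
Your proposal lands on the right ingredients---synthetic spectra, the inequality $n^{2}+n < 2p-2$, and a periodic splitting of comodules---and its final sentence is essentially the paper's strategy. But as written it is a sketch, and the gaps sit exactly where the work is. The ``formality'' claim is imprecise: the paper does \emph{not} show that Postnikov towers in $\synspectra$ split. Instead it builds two Goerss--Hopkins towers, $\spectra_{E} \simeq \varprojlim \mathcal{M}^{top}_{l}$ and $\dcat(E_{*}E) \simeq \varprojlim \mathcal{M}^{alg}_{l}$, where $\mathcal{M}^{top}_{l} \subset \Mod_{\monunitt{l}}(\synspectra)$ and $\mathcal{M}^{alg}_{l} \subset \Mod_{P_{\leq l}}(\dcat(\ComodE)_{\geq 0})$; finite cohomological dimension then forces $h_{k}\spectra_{E} \simeq h_{k}\mathcal{M}^{top}_{l}$ and $h_{k}\dcat(E_{*}E) \simeq h_{k}\mathcal{M}^{alg}_{l}$ once $l \geq k + n^{2}+n -1$. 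This reduces everything to producing an equivalence $\mathcal{M}^{top}_{l} \simeq \mathcal{M}^{alg}_{l}$ of $\infty$-categories for such $l$.

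The step you gloss as ``realise each block and splice'' is precisely where Franke's original argument failed (the gap found by Patchkoria), so it cannot be waved through. The paper's fix is to make the realisation only \emph{weakly} functorial: the Bousfield splitting lands in $h_{t}\spectra_{E}$ with $t = 2p-2-n^{2}-n$, not in $\spectra_{E}$. Because $\mathcal{M}^{top}_{l}$ is an $(l+1)$-category, this is enough, for $l < t$, to extend to an honest cocontinuous left adjoint $\beta^{*}: \dcat(\ComodE)_{\geq 0} \rightarrow \Mod_{\monunitt{l}}(\synspectra)$. The crucial technical content you are missing is that this adjunction is \emph{monadic}, and that the associated algebra $\beta_{*}\monunitt{l}$ is identified with $P_{\leq l}$ (both are the $l$-truncation of the free associative algebra on $\Sigma E_{*}[-1]$). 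That algebra identification is what yields $\Mod_{\monunitt{l}}(\synspectra) \simeq \Mod_{P_{\leq l}}(\dcat(\ComodE)_{\geq 0})$ and hence $\mathcal{M}^{top}_{l} \simeq \mathcal{M}^{alg}_{l}$; without it, ``transfer through synthetic spectra'' and ``$\tau$-inversion on both sides'' do not by themselves produce a functor, let alone an equivalence.
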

In particular, \textbf{Theorem \ref{thm:intro_homotopy_kcategories_of_elocal_spectra_and_periodic_chain_complexes_equivalent_at_large_primes}} yields a complete classification of homotopy types of $E$-local spectra, and of homotopy classes of maps between them. It also implies that any $E_{*}E$-comodule has a canonical realization as a homology of an $E$-local spectrum; as we show in a companion paper, this alone already forces algebraicity of the Picard group of the corresponding $K(n)$-local category \cite{chromatic_picard_groups_at_large_primes}. 

At $n = 1$, the existence of an equivalence $h \spectra_{E} \simeq h \dcat(E_{*}E)$ is a classical result of Bousfield \cite{bousfield1985homotopy}. A generalization to all heights was attempted by Franke \cite{franke1996uniqueness}, but a subtle error in the latter work was found by Patchkoria \cite{patchkoria2017exotic}. 

The equivalence of \textbf{Theorem \ref{thm:intro_homotopy_kcategories_of_elocal_spectra_and_periodic_chain_complexes_equivalent_at_large_primes}} gets stronger the larger the prime in a way which we now make precise. Observe that the collection of $E$-local spectra can be naturally assembled into the stable $\infty$-category $\spectra_{E}$. The same can be said of differential $E_{*}E$-comodules, whose category admits a model structure from which one can extract the underlying $\infty$-category, which we denote by $\mathcal{D}(E_{*}E$) and call the \emph{derived $\infty$-category of $E_{*}E$}.

\begin{thm}[\ref{thm:at_large_primes_homotopy_category_of_e_local_spectra_same_as_periodic_chain_complexes}]
\label{thm:intro_homotopy_kcategories_of_elocal_spectra_and_periodic_chain_complexes_equivalent_at_large_primes_using_k_categories}
Let $E$ be a $p$-local Landweber exact homology theory of height $n$ and assume that $p > n^{2} + n + 1 + \frac{k}{2}$. Then, there exists an equivalence $h_{k}\spectra_{E} \simeq h_{k}\dcat(E_{*}E)$ between the homotopy $k$-categories of the $\infty$-category of $E$-local spectra and the derived $\infty$-category of $E_{*}E$.
\end{thm}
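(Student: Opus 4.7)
The plan is to deduce the theorem from the theory of $E$-based synthetic spectra. Let $\synspectra_{E}$ denote the symmetric monoidal stable presentable $\infty$-category developed in the author's earlier work. It carries a map $\tau$ from a bigraded shift of the unit such that
\[
\synspectra_{E}[\tau^{-1}] \simeq \spectra_{E} \quad \text{and} \quad \synspectra_{E}/\tau \simeq \dcat(E_{*}E),
\]
and in addition a bounded $t$-structure whose heart is the abelian category $\ComodE$. Thus $\synspectra_{E}$ is a one-parameter deformation interpolating between $\spectra_{E}$ and $\dcat(E_{*}E)$, and the theorem will follow from showing that this deformation is $h_{k}$-trivial under the stated inequality on $p$.

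The first step is to record the two quantitative inputs controlling the deformation. \emph{Sparsity}: because $E$ is $p$-local Landweber exact, every element of $E_{*}$ lies in an internal degree divisible by $2(p-1)$, since $|v_{i}| = 2(p^{i}-1)$ and $p-1$ divides $p^{i}-1$. Consequently $\synspectra_{E}$ is \emph{$2(p-1)$-sparse}: relevant bigraded homotopy groups vanish unless the internal degree is a multiple of $2(p-1)$. \emph{Cohomological dimension}: under the height $n$ hypothesis, the abelian category $\ComodE$ has global dimension bounded by $n^{2}+n$, reflecting the dimension of the moduli stack of formal groups of height $\leq n$ together with its natural stratification.

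Next I would analyze the Adams-Novikov spectral sequence internal to $\synspectra_{E}$, whose $E_{2}$-page is $\Ext^{s,t}_{E_{*}E}(E_{*}, E_{*})$, supported in the band $0 \leq s \leq n^{2}+n$ by the cohomological dimension bound and in internal degrees divisible by $2(p-1)$ by sparsity. A differential $d_{r}$ shifts the internal degree by $r-1$, so sparsity forces $r \equiv 1 \pmod{2(p-1)}$, and combined with the horizontal vanishing line the inequality $p > n^{2}+n+1+k/2$ is arranged precisely so that no differential can affect the $h_{k}$ information. A parallel obstruction-theoretic calculation treats the higher coherences classifying the deformation $\synspectra_{E}$: the classifying obstructions live in bidegrees forbidden by sparsity, and hence the deformation becomes trivial after applying $h_{k}(-)$.

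With these vanishing statements in hand, the equivalence is assembled from the two exact symmetric monoidal functors $\synspectra_{E} \to \spectra_{E}$ and $\synspectra_{E} \to \dcat(E_{*}E)$ given respectively by inverting $\tau$ and by taking the cofibre of $\tau$. Both become equivalences after passage to homotopy $k$-categories under the stated inequality, and composing one with the inverse of the other yields the claimed equivalence $h_{k}\spectra_{E} \simeq h_{k}\dcat(E_{*}E)$. I expect the main obstacle to be the obstruction-theoretic step: verifying that an $N$-sparse deformation whose heart has cohomological dimension $d$ is $h_{k}$-trivial precisely when $N$ is sufficiently large relative to $d$ and $k$. This amounts to identifying the classes that classify $\synspectra_{E}$ as a deformation of $\dcat(E_{*}E)$ inside a suitable moduli of deformations, and showing that sparsity places them in degrees where they must vanish.
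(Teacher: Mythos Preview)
Your proposal has the right ingredients---sparsity of $E_{*}$ and the cohomological dimension bound $n^{2}+n$ on $\ComodE$---but the way you assemble them contains two genuine gaps.

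First, the identification $\synspectra_{E}/\tau \simeq \dcat(E_{*}E)$ is not correct. Killing $\tau$ in synthetic spectra yields the (connective) derived $\infty$-category $\dcat(\ComodE)$ of the abelian category of comodules, \emph{not} the periodic derived $\infty$-category $\dcat(E_{*}E)$ of differential comodules. The paper is explicit that these are distinct: $\dcat(E_{*}E)$ is equivalent to modules over the periodicity algebra $P(\monunit)$ with $\pi_{*}P(\monunit)\simeq E_{*}[\tau^{\pm 1}]$ inside $\dcat(\ComodE)$, and it is this extra $P$-module structure that makes $\dcat(E_{*}E)$ the algebraic mirror of $\spectra_{E}$. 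So your zig-zag, as written, does not land in the right target.

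Second, and more seriously, the claim that the functors $\synspectra_{E}\to\spectra_{E}$ (invert $\tau$) and $\synspectra_{E}\to\synspectra_{E}/\tau$ both become equivalences on $h_{k}$ cannot hold. These are localizations with large kernels: for instance the object $\monunit/\tau$ is nonzero in $\synspectra_{E}$ but dies after inverting $\tau$, and this is visible already in $h_{1}$. The paper does not argue this way. Instead it introduces, for each finite $l$, a pair of intermediate $\infty$-categories $\mathcal{M}^{top}_{l}\subset\Mod_{\monunit_{\leq l}}(\synspectra)$ and $\mathcal{M}^{alg}_{l}\subset\Mod_{P_{\leq l}}(\dcat(\ComodE)_{\geq 0})$ of ``potential $l$-stages'', and proves two separate things: (i) the functors $\spectra_{E}\to\mathcal{M}^{top}_{l}$ and $\dcat(E_{*}E)\to\mathcal{M}^{alg}_{l}$ are $h_{k}$-equivalences once $l\geq n^{2}+n+k-1$, using the cohomological dimension bound; and (ii) there is an equivalence of $\infty$-categories $\mathcal{M}^{top}_{l}\simeq\mathcal{M}^{alg}_{l}$ once $l\leq 2p-3-n^{2}-n$. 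The inequality $p>n^{2}+n+1+k/2$ is exactly what makes these two ranges overlap. Step (ii) is where the real work is: it requires constructing the \emph{Bousfield splitting} $\beta:\ComodE\to h_{t}\spectra_{E}$, a weak section of $E_{*}$ built from the sparsity of $E_{*}E$, and then showing via a monadicity argument that $\beta$ induces an identification $\Mod_{\monunit_{\leq l}}(\synspectra)\simeq\Mod_{P_{\leq l}}(\dcat(\ComodE)_{\geq 0})$ compatible with the potential-$l$-stage conditions. Your outline has no analogue of this comparison; the vague ``obstruction-theoretic step'' you flag is in fact the entire content of the theorem.

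A minor point: your sparsity claim that $E_{*}$ is always concentrated in degrees divisible by $2(p-1)$ is false for general Landweber exact $E$ of height $n$ (e.g.\ Morava $E$-theory has $|u|=2$). The paper handles this by invoking Hovey--Strickland to reduce to a choice of $E$, such as Johnson--Wilson $E(n)$, for which it does hold.
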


Here, by a homotopy $k$-category we mean the $\infty$-category with the same objects and where the mapping spaces have been $(k-1)$-truncated \cite{lurie_higher_topos_theory}[2.3.4.12]. In particular, if $k = 1$, then the above result reduces to \textbf{Theorem \ref{thm:intro_homotopy_kcategories_of_elocal_spectra_and_periodic_chain_complexes_equivalent_at_large_primes}}. 

The higher homotopy categories carry strictly more information; for example, we show in the appendix that the triangulated structure on the homotopy category of a stable $\infty$-category is already determined by the homotopy $2$-category, see \textbf{Theorem \ref{thm:triangulated_structure_on_the_homotopy_cat_of_stable_cat_only_depends_on_homotopy_2_cat}}. Thus, if $p > n^{2}+n+2$, then the equivalence $h\spectra_{E} \simeq h\dcat(E_{*}E)$ is triangulated. 

Note that it is necessary to pass to homotopy categories, because the stable $\infty$-categories $\spectra_{E}$ and $\dcat(E_{*}E)$ are \emph{not} equivalent at any prime \cite{barthel2017chromatic}[5.32]. Thus, the equivalence of \textbf{Theorem \ref{thm:intro_homotopy_kcategories_of_elocal_spectra_and_periodic_chain_complexes_equivalent_at_large_primes}} is what is classically known as an exotic equivalence, one that doesn't preserve all of the the higher homotopical information. 

To the author's knowledge, this is a first example of an exotic equivalence where the homological dimension of the relevant abelian category is greater then $3$, the latter achieved in \cite{patchkoria2017exotic}. In fact, in our case it is $n^{2}+n$, the dimension of $E_{*}E$, and so it is unbounded. 

We should mention the result which inspired the current work, namely the asymptotic algebraicity of Barthel, Schlank and Stapleton, who prove that for any non-principal ultrafilter $\mathcal{F}$ on the set of primes, the ultraproducts $\prod _{\mathcal{F}} \spectra_{E} \simeq \prod_{\mathcal{F}} \dcat(E_{*}E)$ are equivalent as symmetric monoidal $\infty$-categories  \cite{barthel2017chromatic}. 

Informally, asymptotic algebraicity shows that "in the limit $p \mapsto \infty$" the $\infty$-categories $\spectra_{E}$ and $\dcat(E_{*}E)$ are equivalent in the strongest possible sense. Our results complement this statement, providing information at finite primes, but at a cost of giving a weaker form of equivalence. 

\subsection{Overview of the approach}

As observed in the introduction, no equivalence $h\spectra_{E} \simeq h\dcat(E_{*}E)$ can be induced by an exact functor between stable $\infty$-categories $\spectra_{E}$ and $\dcat(E_{*}E)$, since such a functor would necessarily be an equivalence itself.

Informally, our approach to the proof of \textbf{Theorem \ref{thm:intro_homotopy_kcategories_of_elocal_spectra_and_periodic_chain_complexes_equivalent_at_large_primes}} is as follows: 

\begin{enumerate}
\item construct "simplified versions" of the $E$-local and derived $\infty$-categories, together with functors $\spectra_{E} \rightarrow \spectra_{E}^{\Diamond}$ and $\dcat(E_{*}Es) \rightarrow \dcat(E_{*}E)^{\Diamond}$,
\item show that the constructed functors induce equivalences $h\spectra_{E} \simeq h\spectra_{E}^{\Diamond}$ and \\ $h\dcat(E_{*}E) \simeq h\dcat(E_{*}E)^{\Diamond}$ between  homotopy categories and
\item show that the $\infty$-categories $\spectra_{E}^{\Diamond}$ and $\dcat(E_{*}E)^{\Diamond}$ are equivalent.
\end{enumerate}
In other words, rather than trying to construct an appropriate functor $\spectra_{E} \rightarrow \dcat(E_{*}E)$, which we know is impossible, we construct a zig-zag of functors which induce equivalences between homotopy categories. 

To construct the $\infty$-categories $\spectra_{E}^{\Diamond}$ and $\dcat(E_{*}E)^{\Diamond}$, the "simplified versions" of the $E$-local and derived $\infty$-categories, we use Goerss-Hopkins theory \cite{moduli_spaces_of_commutative_ring_spectra}, \cite{moduli_problems_for_structured_ring_spectra}. The latter was classically used to obtain obstructions to realizing a given $E_{*}E$-comodule algebra as a homology of a commutative ring spectrum, but we will only make use of the linear variant, with no multiplicative structures involved. 

In more detail, one constructs a tower of $\infty$-categories

\begin{center}
$\spectra_{E} \rightarrow \ldots \rightarrow \mathcal{M}_{1}^{top} \rightarrow \mathcal{M}_{0}^{top}$,
\end{center}
where $\spectra_{E} \simeq \varprojlim \mathcal{M}_{l}^{top}$, $\mathcal{M}^{top}_{0} \simeq \ComodE$ and the functor $\spectra_{E} \rightarrow \mathcal{M}^{top}_{0}$ can be identified with the $E_{*}$-homology functor. Then, the results of Goerss and Hopkins imply there are obstructions to lifting objects, morphisms and homotopies between these along the functors $\mathcal{M}^{top}_{l+1} \rightarrow \mathcal{M}^{top}_{l}$, and that these obstructions live in $\Ext$-groups computed in $\ComodE$, the abelian category of $E_{*}E$-comodules. 

We then observe the classical fact that if $p > n+1$, then the category $\ComodE$ has finite homological dimension equal to $n^{2}+n$, so that the $\Ext$-groups above that degree vanish. This, coupled with Goerss-Hopkins theory, implies that the functors $\spectra_{E} \rightarrow \mathcal{M}_{l}^{top}$ induce an equivalence of homotopy $k$-categories $h_{k}\spectra_{E}  \simeq h_{k}\mathcal{M}_{l}^{top}$ for $l \geq k+n^{2}+n-1$, as in this range all the possible obstructions to lifting objects, morphisms and homotopies between morphisms vanish. Then, $\mathcal{M}_{l}^{top}$ for an appropriately chosen $l$ will be the $\infty$-category denoted above by $\spectra_{E}^{\Diamond}$, the "simplified version" of the $E$-local homotopy theory. 

We also construct an algebraic Goerss-Hopkins tower of the form

\begin{center}
$\dcat(E_{*}E) \rightarrow \ldots \rightarrow \mathcal{M}_{1}^{alg} \rightarrow \mathcal{M}_{0}^{alg}$;
\end{center}
it has the same formal properties. Namely, $\dcat(E_{*}E) \simeq \varprojlim \mathcal{M}_{l}^{alg}$, $\mathcal{M}_{0}^{alg} \simeq \ComodE$ and the functors $\mathcal{M}_{l+1}^{alg} \rightarrow \mathcal{M}^{alg}_{l}$ admit obstructions to lifting objects, morphisms and homotopies between these. The existence of this tower is intuitively why $\dcat(E_{*}E)$ is the appropriate "algebraic mirror" to the $E$-local category, an insight due to Franke. 

If $p > n+1$, then the same argument implies that the functors $\dcat(E_{*}E) \rightarrow \mathcal{M}_{l}^{alg}$ induce equivalences $h_{k}\dcat(E_{*}E) \simeq h_{k}\mathcal{M}_{l}^{alg}$ between homotopy $k$ categories for $l \geq k+n^{2}+n-1$. Then, $\mathcal{M}_{l}^{alg}$ for an appropriately chosen $l$ is the needed "simplified version" of the derived $\infty$-category. This completes the steps $(1)$ and $(2)$ in the outline of the proof sketched above. 

In step $(3)$, the construction of the equivalence $\mathcal{M}^{top}_{l} \simeq \mathcal{M}^{alg}_{l}$, we follow the ideas of Bousfield from his paper on an algebraic model for $(KU)_{p}$-local spectra \cite{bousfield1985homotopy}. 

By results of Hovey and Strickland, the stable $\infty$-category $\spectra_{E}$ and the abelian category $\ComodE$ do not depend on the choice of $E$, but only on the prime and the height. Here, it is technically convenient to work with one such that $E_{*}$ is concentrated in degrees divisible by $(2p-2)$, such as the Johnson-Wilson theory $E(n)$. 

We say that an $E_{*}E$-comodule $M$ is \emph{pure of phase $\varphi$} if it is concentrated in degrees $d =  \varphi$ modulo $(2p-2)$, where $\varphi \in \mathbb{Z}/(2p-2)$. By extension, we say that a spectrum is \emph{pure of phase $\varphi$} when its homology is.

As a consequence of sparsity combined with the finite homological dimension of $\ComodE$, we see that when $2p -2 > n^{2}+n$, the Adams spectral sequence $\Ext^{t, s}_{E_{*}E}(E_{*}X, E_{*}Y) \rightarrow [X,Y]^{t-s}$ collapses and induces an isomorphism $\Hom_{E_{*}E}(E_{*}X, E_{*}Y) \simeq [X, Y]$ for any two pure $E$-local spectra of the same phase. Moreover, Goerss-Hopkins theory implies that any pure comodule can be realized by homology of a spectrum. 

Taken together, the above two observations imply that at large primes the homology functor $E_{*}: \spectra_{E} \rightarrow \ComodE$ induces an equivalence $h_{t} \spectra_{E}^{\varphi} \simeq \ComodE^{\varphi}$ between the homotopy $t$-category of pure $E$-local spectra and the category of pure comodules of fixed phase $\varphi$. A calculation with the Adams-Novikov spectral sequence, which collapses in this case, shows that $t = 2p-2-n^{2}-n$. 

Since we assume that $E_{*}E$ is concentrated in degrees divisible by $(2p-2)$, any comodule is a sum of pure ones. This allows us to define a functor $\beta: \ComodE \rightarrow h_{t} \spectra_{E}$ given by 

\begin{center}
$\beta(\bigoplus M^{\varphi}) = \bigvee R^{\varphi}(M^{\varphi})$,
\end{center}
where the sum is taken over phases $\varphi \in \mathbb{Z}/(2p-2)$ and $R^{\varphi}(M^{\varphi})$ is the unique $E$-local spectrum with homology isomorphic to $M^{\varphi}$, which by the above discussion is well-defined as an object of $h_{t}\spectra_{E}$. 

By construction, we have $E_{*}(\beta(M)) \simeq M$ for any comodule $M$, and so we call $\beta$ the \emph{Bousfield splitting}. The spectra in the essential image are exactly those which are \emph{split} in the sense that they can be written as a finite sum of pure spectra. 

Intuitively, the Bousfield splitting $\beta: \ComodE \rightarrow h_{t}\spectra_{E}$ codifies not only that at large primes all comodules are realizable as a homology of a split spectrum, but also that such a realization is weakly functorial in the comodule. Here, we say weakly since we do not have a functor $\ComodE \rightarrow \spectra_{E}$, but rather only a functor into an appropriate homotopy category. At the same time, since $t$ grows with $p$, the functoriality gets stronger as the prime grows.

We then show that the Bousfield splitting induces a functor $\beta_{*}: \mathcal{M}_{l}^{alg} \rightarrow \mathcal{M}_{l}^{top}$ between the layers of Goerss-Hopkins towers for $l < t$. The functor $\beta_{*}$ is defined as an appropriate Kan extension using the explicit description of $\mathcal{M}^{top}_{l}$ coming from the work of the author on synthetic spectra \cite{pstrkagowski2018synthetic}. Informally, the reason that the weak functoriality turns out to be enough is that $\mathcal{M}_{l}^{top}$ is an $(l+1)$-category and so for $l < t$ it doesn't detect the difference between $\spectra_{E}$ and its homotopy $t$-category. 

Finally, we verify that $\beta_{*}: \mathcal{M}_{l}^{alg} \rightarrow \mathcal{M}_{l}^{top}$ is an equivalence. Choosing the prime large enough so that $l$ is in a range where $h\spectra_{E} \simeq h\mathcal{M}^{top}_{l}$ and $h\dcat(E_{*}E) \simeq h\mathcal{M}^{alg}_{l}$ finishes step $(3)$ and gives a proof of \textbf{Theorem \ref{thm:intro_homotopy_kcategories_of_elocal_spectra_and_periodic_chain_complexes_equivalent_at_large_primes}}. 

\subsection{Notation and conventions}

We will use $p$ to denote the prime and $n$ to denote the height. These will often be fixed, in which case we will let $E$ denote a $p$-local Landweber exact homology theory of height $n$. We will sometimes use the shorthand $q = 2p-2$. 

We use the theory of $\infty$-categories, as developed by Lurie and Joyal \cite{lurie_higher_topos_theory}. Unless explicitly stated otherwise, all constructions, in particular limits and colimits, should be understood in this sense. 

We say an $\infty$-category $\ccat$ is a \emph{$k$-category} if the mapping space $\map_{\ccat}(c, c^\prime)$ is $(k-1)$-truncated for any $c, c^\prime \in \ccat$. If $\ccat$ is an $\infty$-category, then there exists a $k$-category $h_{k}\ccat$ equipped with a functor $\ccat \rightarrow h_{k}\ccat$ which is universal with respect to this property, which we call the \emph{homotopy $k$-category.} \cite{lurie_higher_topos_theory}[2.3.4.12]. 

Using the explicit construction of Lurie, one can take $h_{k}\ccat$ to have the same objects as $\ccat$. The mapping spaces are given by $\map_{h_{k}\ccat}(c, c^\prime) \simeq \map_{\ccat}(c, c^\prime)_{\leq k-1}$. If $k = 1$, this is just the usual homotopy category, which we denote by $h\ccat$. 

\subsection{Acknowledgements}

I would like to thank my supervisor, Paul Goerss, for his support and wisdom, as well as for suggesting this problem. I would like to thank Tobias Barthel for the useful conversations we had on this subject. 

\section{Realization of comodules}
\label{section:realization_of_comodules}

In this section we show that if $E$ is a $p$-local Landweber exact homology theory of height $n$ and $2p-2 > n^2+n+2$, then any $E_{*}E$-comodule can be realized as a homology of a spectrum of a particular form, which we call split. We then show that the formation of a split realization is weakly functorial in the comodule. 

We start by introducing the notion of a height of Landweber exact homology theory. By $BP$ we denote the Brown-Peterson spectrum with $BP_{*} \simeq \mathbb{Z}_{(p)}[v_{1}, v_{2}, \ldots]$ and by $I_{n}$ the invariant ideals $I_{n} = (p, v_{0}, \ldots, v_{n-1})$. Let $E$ be a $p$-local multiplicative Landweber exact homology theory, so that $E_{*}$ is a $BP_{*}$-algebra.

\begin{defin}[\cite{hovey2003comodules}(4.1)]
We say $E$ is \emph{height $n$} if $E_{*} / I_{n} \neq 0$, but $E_{*} / I_{n+1} = 0$. 
\end{defin}
Let us give a few examples.

\begin{example}
\label{example:johnson_wilson_theory}
The Johnson-Wilson theory spectrum $E(n)$ with $E(n)_{*} \simeq \mathbb{Z}_{(p)}[v_{1}, \ldots, v_{n-1}, v_{n}^{\pm1}]$ is Landweber exact of height $n$. 
\end{example}

\begin{example}
To any perfect field $k$ of characteristic $p$ and a formal group law $\Gamma$ over $k$ of height $n$ one can associate the \emph{Lubin-Tate theory} spectrum $E(k, \Gamma)$, also known as the \emph{Morava $E$-theory}. We have $E(k, \Gamma)_{*} \simeq W(k)[[u_{1}, \ldots, u_{n-1}]][u^{\pm1}]$ and one verifies easily that $E(k, \Gamma)$ is Landweber exact of height $n$. 
\end{example}
If $E$ is a Landweber exact homology theory, then it is Adams \cite{hovey2003homotopy}[1.4.9], so that $E_{*}E$ has a canonical structure of a Hopf algebroid and $E_{*}X$ carries a structure of an $E_{*}E$-comodule for any spectrum $X$. The resulting category $\ComodE$ of comodules is symmetric monoidal Grothendieck abelian, with the tensor product induced from that of $E_{*}$-modules. 

It is a deep result of Hovey and Strickland that for any two Landweber exact homology theories of the same height the associated categories $\ComodE$ of $E_{*}E$-comodules are equivalent as symmetric monoidal categories, since they are both localizations of $\mathcal{C}omod_{BP_{*}BP}$ at the same class of maps \cite{hovey2003comodules}[4.4]. It follows that for our purposes all such homology theories are essentially interchangable.

The following fact is a basis of our approach to the $E$-local category at large primes, it appears to be folklore and can be found in the work of Franke \cite{franke1996uniqueness}[3.4.3.9], but perhaps not in a language easily understood by algebraic topologists. Thus, for the convenience of the reader we recall the argument here.

\begin{thm}
\label{thm:injective_dimension_of_ee_comodules}
Let $E$ be Landweber exact homology theory of height $n$ and assume that $p > n+1$. Then, the category $\ComodE$ of $E_{*}E$-comodules has homological dimension $n^{2}+n$, that is, for any comodules $M, N$ we have $\Ext_{E_{*}E}^{s, t}(M, N) = 0$ for $s > n^{2}+n$. 
\end{thm}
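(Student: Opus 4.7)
The plan is to combine the chromatic filtration of comodules with the Morava change-of-rings theorem, thereby reducing the vanishing statement to a statement about continuous cohomology of $p$-adic Lie groups, where it follows from Lazard's theorem.

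By the Hovey--Strickland equivalence already recalled above, it suffices to prove the analogous bound inside the appropriate localisation of $BP_{*}BP$-comodules. I would filter the second variable $N$ by its chromatic tower, whose $s$-th associated graded piece for $0 \leq s \leq n$ is a module over $v_{s}^{-1}BP_{*}/I_{s}$ (and vanishes for $s > n$ by the height hypothesis), obtaining a convergent spectral sequence
\[
E_{1}^{s,t} = \Ext_{BP_{*}BP}^{t}(M, v_{s}^{-1}N_{s}/I_{s}) \Longrightarrow \Ext_{BP_{*}BP}^{s+t}(M,N),
\]
concentrated in the strip $0 \leq s \leq n$.

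On the $s$-th layer the Miller--Ravenel change-of-rings theorem identifies the $E_{1}$-term with the continuous cohomology of the height-$s$ Morava stabilizer group $\mathbb{S}_{s}$ acting on a suitable Lubin--Tate coefficient module $K_{s}$,
\[
\Ext_{BP_{*}BP}^{t}(M, v_{s}^{-1}N_{s}/I_{s}) \cong H_{c}^{t}(\mathbb{S}_{s}; K_{s}).
\]
By Lazard's theorem, $\mathbb{S}_{s}$ is a $p$-adic Lie group of dimension $s^{2}$, and it is torsion-free precisely when $p - 1$ does not divide $s$; the assumption $p > n+1$ guarantees this for every $0 \leq s \leq n$. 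A torsion-free $p$-adic Lie group of dimension $d$ is a Poincar\'e duality group of cohomological dimension $d$, so $H_{c}^{t}(\mathbb{S}_{s}; -) = 0$ for $t > s^{2}$.

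Combining these two vanishing ranges, the chromatic spectral sequence has $E_{1}^{s,t} = 0$ unless $0 \leq s \leq n$ and $t \leq s^{2}$, so every surviving term satisfies $s + t \leq s + s^{2}$. This total degree is maximised over the allowed range at $s = n$ and equals $n^{2}+n$, which forces $\Ext_{E_{*}E}^{i}(M,N) = 0$ for $i > n^{2}+n$. The main obstacle I anticipate is the careful set-up and convergence of the chromatic spectral sequence for $\Ext$ between two arbitrary height-$\leq n$ comodules, together with the bookkeeping needed to apply Miller--Ravenel change-of-rings layer by layer; once these are in place, the cohomological dimension bound is purely an input from Lazard.
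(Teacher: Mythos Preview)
Your approach shares its core with the paper's: both invoke the chromatic spectral sequence together with Morava's change-of-rings and Lazard's bound on the cohomological dimension of the stabilizer groups. The paper, however, uses this machinery only in the special case $M = N = E_{*}$, citing Hovey--Sadofsky, and then passes to arbitrary $M$ and $N$ by a purely algebraic bootstrap: first to arbitrary $N$ via the Landweber filtration and a filtered-colimit argument (using that $E_{*}$ is dualizable so that $\Ext(E_{*},-)$, computed by the cobar complex, commutes with filtered colimits), and then to arbitrary $M$ by observing that shifts of $E_{*}$ generate $\ComodE$ and that the vanishing condition in the first variable is closed under direct sums and quotients---equivalently, that every $N$ has injective dimension at most $n^{2}+n$.

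The gap in your proposal is the change-of-rings step. The Miller--Ravenel/Morava theorem identifies $\Ext_{BP_{*}BP}^{t}(BP_{*}, v_{s}^{-1}L)$, for suitable $I_{s}$-torsion $L$, with continuous cohomology of $\mathbb{S}_{s}$; it says nothing about $\Ext_{BP_{*}BP}^{t}(M, -)$ for an arbitrary first variable $M$. Your asserted isomorphism $\Ext_{BP_{*}BP}^{t}(M, v_{s}^{-1}N_{s}/I_{s}) \cong H_{c}^{t}(\mathbb{S}_{s}; K_{s})$ is therefore unjustified unless $M$ is dualizable, in which case one can move it into the second variable---but then one still has to pass from dualizable $M$ to general $M$, and that reduction is exactly the ``good source'' step the paper supplies. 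In short, your chromatic-plus-Lazard argument does establish the vanishing when $M = E_{*}$ (and even for arbitrary $N$, which mildly streamlines the paper's ``good target'' step by avoiding the Landweber filtration), but the extension to all $M$ is not bookkeeping: it cannot be absorbed into the spectral sequence and requires the separate generator argument.
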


\begin{proof}
Using the chromatic spectral sequence, one shows that $\Ext_{E_{*}E}^{s, t}(E_{*}, E_{*}) = 0$ for $s > n^{2} + n$ if $p > n+1$, as a consequence of Morava vanishing. We do not reproduce the proof of that fact here, as it can be found in \cite{hovey1999invertible}[5.1].

Let us say that a comodule $N$ is a \emph{good target} if $\Ext_{E_{*}E}^{s, t}(E_{*}, N) = 0$ for $s > n^{2}+n$; by what was said above we know that the shifts of $E_{*}$ are good targets. Similarly, the long exact sequence of $\Ext$-groups implies that if $0 \rightarrow N \rightarrow N^{\prime} \rightarrow N^{\prime \prime} \rightarrow 0$ is short exact and $N$ is a good target, then $N^{\prime}$ is a good target if and only if $N^{\prime \prime}$ is. We then deduce by induction on the length of the Landweber filtration in the sense of \cite{hovey2003comodules} that all finitely generated comodules are good targets. 

Now, any $E_{*}E$-comodule is a filtered colimit of finitely generated ones. Since $E_{*}$ is dualizable, we know that $\Ext^{s, t}(E_{*}, N)$ can be computed using the cobar complex and so commutes with filtered colimits in $N$. Thus, we deduce that all comodules are good targets.

Let us now say that $N$ is a \emph{good source} if $\Ext^{s, t}_{E_{*}E}(N, M) = 0$ for $s > n^{2} + n$ and all $M$, by the above $E_{*}$ is good source. The condition of being a good source is clearly closed under direct sums, and under taking quotients, as we see using the long exact sequence of $\Ext$-groups. Since $E_{*}$ generates the whole category of $E_{*}E$-comodules by \cite{hovey2003comodules}[5.1], it follows that every comodule is a good source, which is what we wanted to show.
\end{proof}

\begin{rem}
\label{rem:comode_generated_by_objects_of_finite_homological_dimension}
If $p \leq n+1$, then the category $\ComodE$ is not of finite homological dimension. However, the argument given in the proof of \textbf{Theorem \ref{thm:injective_dimension_of_ee_comodules}} is enough to establish that $\ComodE$ is always generated under colimits by objects $M$ of injective dimension $n^{2}+n$, that is, such that $\Ext^{t, s}(M, N) = 0$ for $s > n^{2}+n$ and all $N$. In fact, one can take $M$ to be $E_{*}(DP)$, where $DP$ is the Whitehead dual of the Hopkins-Ravenel finite spectrum of \cite{ravenel2016nilpotence}[8.3].
\end{rem}

\begin{rem}
\label{rem:dcomode_is_left_complete}
Since at any prime and height $\ComodE$ is generated by objects of finite homological dimension, see \textbf{Remark \ref{rem:comode_generated_by_objects_of_finite_homological_dimension}}, it follows that the derived $\infty$-category $\dcat(\ComodE)$ is always left complete.
\end{rem}

For technical reasons, it will be convenient for us to work with a Landweber exact homology theory $E$ of height $n$ such that $E_{*}$ is concentrated in degrees divisible by $q = (2p-2)$. For example, as one can take Johnson-Wilson theory of \textbf{Example \ref{example:johnson_wilson_theory}}. Notice that when $E_{*}$ is concentrated in degrees divisible by $q$, then so is $E_{*}E \simeq E_{*} \otimes _{BP_{*}} BP_{*}BP \otimes _{BP_{*}} E_{*}$. 

\begin{defin}
If $\varphi \in \mathbb{Z}/q$, then we say that an $E_{*}E$-comodule $M$ is \emph{pure of phase $\varphi$} if it is concentrated in degrees $d = \varphi$ modulo $q$. We denote the category of pure comodules of phase $\varphi$ by $\ComodE^{\varphi}$.
\end{defin}

We have an equivalence of categories $\prod _{\varphi \in \mathbb{Z}/q} \ComodE^{\varphi} \simeq \ComodE$, where the first term is the product of the categories of pure comodules of different phases. An explicit formula for this equivalence is given by $(M^{\varphi})_{\varphi \in \mathbb{Z}/q} \mapsto \bigoplus _{\varphi \in \mathbb{Z}/q} M^{\varphi}$; in other words, any comodule is a finite sum of pure ones, and canonically so. 

\begin{rem}
Note that even if $E_{*}$ isn't concentrated in degrees divisible by $q$, the category $\ComodE$ of comodules still enjoys a splitting as a $q$-fold product, since by results of Hovey and Strickland, $\ComodE$ only depends on the height. 

In this case, the splitting cannot be defined in terms of degree alone. For example, in his work at $n =1$, Bousfield uses a splitting in terms of the eigenvalues of Adams operations \cite{bousfield1985homotopy}.
\end{rem}

\begin{defin}
\label{defin:pure_and_split_spectra}
We say a spectrum $X$ is \emph{pure of phase $\varphi$} if its homology $E_{*}X$ is. We say $X$ is \emph{split} if it is a finite sum of pure spectra. 
\end{defin}
In the $n = 1$ case, the notion of a split spectrum appears in the work of Bousfield under the name of a \emph{generalized Eilenberg-MacLane spectrum} \cite{bousfield1985homotopy}. Notice that unlike in the case of comodules, where every comodule can be written as a finite sum of its pure parts, not every spectrum is split.

\begin{thm}
\label{thm:at_large_primes_every_ee_comodule_is_realizable}
Let $2p > n^2+n$ and $p > n+1$. Then any $E_{*}E$-comodule $M$ is realizable by a homology of a split spectrum; that is, there exists a split spectrum $X$ such that $E_{*}X \simeq M$.
\end{thm}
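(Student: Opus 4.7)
The plan is to reduce the problem to realizing pure comodules one at a time and then kill the remaining obstructions by combining sparsity with \textbf{Theorem \ref{thm:injective_dimension_of_ee_comodules}}.

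First, I would use the canonical splitting $\ComodE \simeq \prod_{\varphi \in \mathbb{Z}/q} \ComodE^{\varphi}$ recalled just above the theorem to write $M \simeq \bigoplus_{\varphi \in \mathbb{Z}/q} M^{\varphi}$ as a finite sum of pure comodules. Since a finite wedge of pure spectra is split by \textbf{Definition \ref{defin:pure_and_split_spectra}}, it suffices to realize each $M^{\varphi}$ by a pure spectrum $X^{\varphi}$ and then set $X := \bigvee_{\varphi} X^{\varphi}$; this will automatically satisfy $E_{*}X \simeq M$ because $E_{*}$ commutes with finite wedges.

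To realize a single pure $M^{\varphi}$, I would invoke the Goerss--Hopkins tower $\spectra_{E} \to \cdots \to \mathcal{M}_{l}^{top} \to \cdots \to \mathcal{M}_{0}^{top} \simeq \ComodE$ outlined in the introduction, lifting $M^{\varphi}$ step by step up the tower. The obstructions to each successive lift live on a single topological-degree line in groups of the form $\Ext_{E_{*}E}^{s,t}(M^{\varphi}, M^{\varphi})$, with cohomological degree $s$ growing with the level. Two complementary vanishing statements then combine. First, sparsity: since $M^{\varphi}$ is pure of phase $\varphi$ and $E_{*}E$ is concentrated in degrees divisible by $q = 2p-2$, $\Ext_{E_{*}E}^{s,t}(M^{\varphi}, M^{\varphi}) = 0$ unless $q$ divides $t$. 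Second, \textbf{Theorem \ref{thm:injective_dimension_of_ee_comodules}} (which uses $p > n+1$) gives $\Ext_{E_{*}E}^{s,*} = 0$ for $s > n^{2}+n$. The first level at which the obstruction class is allowed by sparsity sits at cohomological degree $s$ roughly equal to $q + 2 = 2p$, and the hypothesis $2p > n^{2}+n$ is exactly what pushes this value strictly above the homological dimension $n^{2}+n$, killing the class; every higher obstruction lies at an even larger value of $s$ and so vanishes a fortiori. Thus the lift succeeds at every stage and produces the required pure realization $X^{\varphi}$.

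The hard part is not the counting but getting the bidegrees of the obstruction classes on the nose: the sparsity/dimension combination is tight, so that even an off-by-one in the identification of the layer-by-layer obstructions as $\Ext$-classes of the stated shape would shift the numerical inequality. Consequently the real work is setting up the Goerss--Hopkins tower for $\spectra_{E}$ precisely enough that its layers are controlled by $\Ext_{E_{*}E}^{*,*}(M^{\varphi}, M^{\varphi})$ in the expected form; this is developed in later sections of the paper, after which the preceding counting argument concludes.
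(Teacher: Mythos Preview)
Your proposal is correct and follows essentially the same route as the paper: reduce to pure comodules via the splitting, then invoke Goerss--Hopkins obstruction theory with obstructions in $\Ext_{E_{*}E}^{k+2,k}(M,M)$ for $k \geq 1$, which vanish by sparsity for $k < 2p-2$ and by \textbf{Theorem \ref{thm:injective_dimension_of_ee_comodules}} for $k \geq 2p-2$ since then $k+2 \geq 2p > n^{2}+n$. The only difference is that the paper cites the original Goerss--Hopkins source directly for the bidegrees rather than deferring to the tower built in \S\ref{section:goerss_hopkins_theory}, so the argument is already complete at this point and your worry about an off-by-one is resolved by that citation.
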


\begin{proof}
Since any comodule is a finite sum of pure ones, we can assume that $M$ is pure. Then, by Goerss-Hopkins obstruction theory, there are inductively defined obstructions to realizing $M$ as a homology of a spectrum lying in the groups $\Ext_{E_{*}E}^{k+2, k}(M, M)$ for $k \geq 1$ \cite{moduli_spaces_of_commutative_ring_spectra}. These groups vanish by purity of $M$ when $k < 2p-2$, since there's nothing in these internal degrees, and by \textbf{Theorem \ref{thm:injective_dimension_of_ee_comodules}}, when $k \geq 2p-2$, since then $k+2 \geq 2p > n^2+n$.
\end{proof}

\begin{rem}
The result of \textbf{Theorem \ref{thm:at_large_primes_every_ee_comodule_is_realizable}} appears to be folklore, although we do not know of a reference that proves it in this generality. Hovey and Strickland observe that this follows from Franke's work \cite{hovey1999morava}, but a gap in the latter has been found by Patchkoria \cite{patchkoria2017exotic}.

An important test case is the existence of $E$-local Toda-Smith complexes, that is, of spectra $V(i)$ such that $E_{*} V(i) \simeq E_{*} / I_{i}$. In this case, the observation that $V(i)$ can be constructed inductively at large primes appears in the work of Devinatz \cite{devinatz2008towards}.
\end{rem}
The rest of the section will be devoted to a strengthening of \textbf{Theorem \ref{thm:at_large_primes_every_ee_comodule_is_realizable}}, where we show that a split realization of a comodule is not only unique, but can also be made weakly functorial in the underlying comodule. Our method here will be the Adams spectral sequence. 

Recall that if $X, Y$ are $E$-local, then to show that the $E_{2}$-term of the of the Adams spectral sequence converging to $[X, Y]$ can be identified with $\Ext$-groups in comodules one identifies the former with the homology of the cobar complex. However, the cobar complex only computes the $\Ext$-groups under the assumption that $E_{*}X$ is projective, as it is not a resolution by injectives, but only by relative injectives \cite{ravenel_complex_cobordism}[App. 1]. 

This issue can be avoided by working with the so-called \emph{modified Adams spectral sequence}. To construct it, one observes that if $I$ is an $E$-local spectrum such that $E_{*}I$ is an injective comodule, then $[X, I] \simeq \Hom_{E_{*}E}(E_{*}X, E_{*}I)$ as a consequence of Brown representability. The modified Adams spectral sequence is obtained by resolving $Y$ by such injective spectra and applying $[X, -]$; for the details, see \cite{devinatz1997morava}. 

It is clear from the construction that the $E_{2}$-term coincides with $\Ext$-groups. Moreover, we will only be working with this spectral sequence when $p > n+1$, in which case \textbf{Theorem \ref{thm:injective_dimension_of_ee_comodules}} guarantees that the resolution of $Y$ can be chosen to be finite, so that the spectral sequence clearly converges in a finite number of steps. 

\begin{lemma}
\label{lemma:homology_functor_highly_connective_on_mapping_spaces}
Suppose that $2p-2 > n^{2}+n$ and let $X, Y$ be pure $E$-local spectra of the same phase. Then, $\map_{\spectra_{E}}(X, Y) \rightarrow \Hom_{E_{*}E}(E_{*}X, E_{*}Y)$ is a $(2p-2-n^2-n)$-connected map of spaces. 
\end{lemma}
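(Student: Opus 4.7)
The plan is to analyze the modified Adams spectral sequence $E_2^{s,t} = \Ext^{s,t}_{E_{*}E}(E_{*}X, E_{*}Y) \Rightarrow \pi_{t-s}\map_{\spectra_{E}}(X, Y)$ recalled just above the lemma, and to identify the natural homology map $\map_{\spectra_{E}}(X, Y) \to \Hom_{E_{*}E}(E_{*}X, E_{*}Y)$ with its edge homomorphism in total degree zero. Since $2p - 2 > n^{2} + n$ forces $p > n + 1$, \textbf{Theorem \ref{thm:injective_dimension_of_ee_comodules}} applies and guarantees that $E_{*}Y$ admits an injective resolution of length at most $n^{2} + n$, so the tower of $E$-local injective spectra used to build the spectral sequence is finite and convergence is strong.

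The key input is a joint vanishing of $E_2^{s,t}$. First, by \textbf{Theorem \ref{thm:injective_dimension_of_ee_comodules}} we have $E_2^{s,t} = 0$ for $s > n^{2} + n$. Second, since $X$ and $Y$ are pure of the same phase $\varphi$, both $E_{*}X$ and $E_{*}Y$ are concentrated in degrees $\equiv \varphi \pmod{q}$, where $q = 2p - 2$. Using the product decomposition $\ComodE \simeq \prod_{\psi \in \mathbb{Z}/q} \ComodE^{\psi}$ one may choose an injective resolution of $E_{*}Y$ entirely inside $\ComodE^{\varphi}$; the internal grading of the $\Hom$-complex $\Hom_{E_{*}E}(E_{*}X, I^{\bullet})$ then vanishes outside internal degrees divisible by $q$, so $E_2^{s,t} = 0$ whenever $t \not\equiv 0 \pmod q$.

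Combining the two constraints, a nonzero bidegree with $t - s = k$ requires $0 \le s \le n^{2} + n$ and $s \equiv -k \pmod{q}$. For $k = 0$ the only possibility is $(s,t) = (0,0)$, so the filtration on $\pi_0 \map_{\spectra_{E}}(X, Y)$ collapses and the edge map identifies it with $E_{\infty}^{0,0} = E_2^{0,0} = \Hom_{E_{*}E}(E_{*}X, E_{*}Y)$. For $0 < k < q - n^{2} - n$, the smallest non-negative $s$ satisfying $s \equiv -k \pmod q$ is $s = q - k > n^{2} + n$, so no bidegree contributes and $\pi_k \map_{\spectra_{E}}(X, Y) = 0$. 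Since the target $\Hom_{E_{*}E}(E_{*}X, E_{*}Y)$ is discrete, this is precisely $(2p - 2 - n^{2} - n)$-connectivity. The main point of care is the purity-based vanishing of $\Ext^{s,t}$ in internal degrees $t \not\equiv 0 \pmod q$ together with the identification of the natural homology map with the edge homomorphism; both are essentially formal once the phase decomposition of $\ComodE$ is used to produce the resolution.
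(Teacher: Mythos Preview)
Your proof is correct and follows essentially the same approach as the paper: both analyze the modified Adams spectral sequence, combine the horizontal vanishing line from \textbf{Theorem \ref{thm:injective_dimension_of_ee_comodules}} with the sparsity in internal degree coming from purity to conclude that the spectral sequence collapses at $E_{2}$, and then read off the connectivity from the resulting gap in total degrees. Your version is a bit more explicit about the degree bookkeeping and the identification of the homology map with the edge homomorphism, but the argument is the same.
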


\begin{proof}
Notice that since $2p-2 > n^{2}+n$, we have $p > n+1$. It follows that we have a strongly convergent modified Adams spectral sequence 

\begin{center}
$\Ext^{s, t}_{E_{*}E}(E_{*}X, E_{*}Y) \Rightarrow [X, Y]^{t-s}$.
\end{center}
By \textbf{Theorem \ref{thm:injective_dimension_of_ee_comodules}}, the $E_{2}$-term has a horizontal vanishing line at $n^{2}+n$ and by assumption of purity, it is concentrated in internal degrees divisible by $2p-2$. It follows that the spectral sequence collapses on the second page. 

Then, notice that the only non-zero group on the $t - s = 0$ line is $\Ext^{0, 0}_{E_{*}E}(E_{*}X, E_{*}Y)$ and that all groups vanish for $0 < t-s < 2p - 2 - n^2 - n$. This ends the argument. 
\end{proof}

\begin{thm}
\label{thm:homology_on_pure_spectra_gives_an_equivalence_with_homotopy_category}
Let $\varphi \in \mathbb{Z}/q$ and let $\spectra_{E}^{\varphi}$ be the $\infty$-category of pure $E$-local spectra of phase $\varphi$. Then, the functor $E_{*}: \spectra_{E} \rightarrow \ComodE$ induces an equivalence $h_{k}\spectra_{E}^{\varphi} \simeq \ComodE^{\varphi}$ between the homotopy $k$-category of $\spectra_{E}^{\varphi}$, where $k = 2p-2-n^{2}-n$, and the category of pure comodules of phase $\varphi$. 
\end{thm}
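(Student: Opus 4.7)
The plan is to package together the two preceding results: \textbf{Theorem \ref{thm:at_large_primes_every_ee_comodule_is_realizable}} will give essential surjectivity, while \textbf{Lemma \ref{lemma:homology_functor_highly_connective_on_mapping_spaces}} will give fully faithfulness on mapping spaces after the appropriate truncation. Since $\ComodE^{\varphi}$ is a $1$-category, and in particular a $k$-category for every $k \geq 1$, the restricted homology functor $E_{*} : \spectra_{E}^{\varphi} \to \ComodE^{\varphi}$ factors uniquely through the universal map $\spectra_{E}^{\varphi} \to h_{k}\spectra_{E}^{\varphi}$, and the theorem reduces to showing the induced functor $h_{k}\spectra_{E}^{\varphi} \to \ComodE^{\varphi}$ is an equivalence.

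For essential surjectivity, given a pure comodule $M$ of phase $\varphi$ I would appeal to \textbf{Theorem \ref{thm:at_large_primes_every_ee_comodule_is_realizable}} to obtain a split spectrum $X$ with $E_{*}X \simeq M$; since $E_{*}X$ is pure of phase $\varphi$, the spectrum $X$ is itself pure of phase $\varphi$ by \textbf{Definition \ref{defin:pure_and_split_spectra}}, so every object of $\ComodE^{\varphi}$ lies in the essential image.

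For fully faithfulness, fix pure $E$-local spectra $X, Y$ of phase $\varphi$. Using Lurie's explicit model of the homotopy $k$-category, the mapping space in $h_{k}\spectra_{E}^{\varphi}$ is the $(k-1)$-truncation $\map_{\spectra_{E}}(X, Y)_{\leq k-1}$ (and $\spectra_{E}^{\varphi}$ is a full subcategory of $\spectra_{E}$, so no distinction of mapping spaces arises). By \textbf{Lemma \ref{lemma:homology_functor_highly_connective_on_mapping_spaces}}, the natural map
\[
\map_{\spectra_{E}}(X, Y) \to \Hom_{E_{*}E}(E_{*}X, E_{*}Y)
\]
is $k$-connected with $k = 2p - 2 - n^{2} - n$; since the target is already discrete, its $(k-1)$-truncation agrees with itself, and therefore the induced map $\map_{\spectra_{E}}(X, Y)_{\leq k-1} \to \Hom_{E_{*}E}(E_{*}X, E_{*}Y)$ is an equivalence of spaces.

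The substantive work has already been carried out in the preceding realization theorem and the Adams spectral sequence lemma, so I do not expect any genuine obstacle at this step; the only thing to verify is that the connectivity bound of \textbf{Lemma \ref{lemma:homology_functor_highly_connective_on_mapping_spaces}} lines up precisely with the truncation level defining $h_{k}$, which holds by construction.
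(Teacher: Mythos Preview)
Your proposal is correct and follows exactly the paper's approach: the paper's proof simply notes that the statement amounts to essential surjectivity plus an equivalence on $(k-1)$-truncated mapping spaces, and refers to \textbf{Theorem \ref{thm:at_large_primes_every_ee_comodule_is_realizable}} and \textbf{Lemma \ref{lemma:homology_functor_highly_connective_on_mapping_spaces}} for these two facts. Your write-up spells out a couple of details (the factoring through $h_k$, and why the realizing spectrum is actually pure) that the paper leaves implicit, but the argument is the same.
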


\begin{proof}
The statement of the theorem is equivalent to saying that $E_{*}: \spectra_{E}^{\varphi} \rightarrow \ComodE^{\varphi}$ induces an equivalence between mapping spaces after applying $(k-1)$-truncation, and that it is essentially surjective. This is exactly \textbf{Lemma \ref{lemma:homology_functor_highly_connective_on_mapping_spaces}} and \textbf{Theorem \ref{thm:at_large_primes_every_ee_comodule_is_realizable}}. 
\end{proof}
Notice that \textbf{Theorem \ref{thm:homology_on_pure_spectra_gives_an_equivalence_with_homotopy_category}} can be interpreted as saying that the unique realization of a pure comodule is weakly functorial. Here, by saying \emph{weakly functorial} we mean that it is strictly functorial if we consider the realization as an object of the homotopy $k$-category, rather than the $\infty$-category $\spectra_{E}$ itself. However, as $k$ grows together with $p$, the functoriality gets stronger the larger the prime. 

We will now extend this functoriality of \textbf{Theorem \ref{thm:homology_on_pure_spectra_gives_an_equivalence_with_homotopy_category}} to all comodules by allowing finite sums, that is, by working with split spectra. 

\begin{defin}
\label{defin:bousfield_splitting}
For each $\varphi \in \mathbb{Z}/q$, let $R^{\varphi}: \ComodE^{\varphi} \rightarrow h_{k}\spectra_{E}^{\varphi}$ denote the inverse to the equivalence of \textbf{Theorem \ref{thm:homology_on_pure_spectra_gives_an_equivalence_with_homotopy_category}}. Then, the \emph{Bousfield splitting functor} $\beta: \ComodE \rightarrow h_{k}\spectra_{E}$ is defined by 

\begin{center}
$\beta(M) = \bigvee_{\varphi \in \mathbb{Z}/q} R^{\varphi}(M^{\phi})$,
\end{center}
where $M = \bigoplus _{\varphi \in \mathbb{Z}/q} M^{\varphi}$ is the pure decomposition of $M$. 
\end{defin}

\begin{rem}
\label{rem:bousfield_splitting_a_splitting}
By construction we have $E_{*}(\beta M) \simeq M$, justifying the name. Note that the spectra in the image of $\beta$ are exactly the split $E$-local spectra.
\end{rem}

We will now prove that when restricted to comodules projective over the base ring $E_{*}$, the Bousfield splitting $\beta$ can be given a structure of a symmetric monoidal functor. For this purpose, it will be convenient for us to identify the abelian group $\mathbb{Z}/q$ with a discrete symmetric monoidal category. 

\begin{lemma}
If $\ccat$ is presentably symmetric monoidal, then the functor $\infty$-category $\Fun(\mathbb{Z}/q, \ccat)$ carries the symmetric monoidal structure of Day convolution, informally given by 

\begin{center}
$(X \otimes Y)(\varphi) = \bigoplus _{\kappa + \lambda = \varphi} X(\kappa) \otimes Y(\lambda)$,
\end{center}
where $\varphi, \kappa, \lambda \in \mathbb{Z}/q$. 
\end{lemma}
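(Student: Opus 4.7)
The approach is to invoke the general theory of Day convolution on functor categories out of a small symmetric monoidal $\infty$-category into a presentably symmetric monoidal one, due to Glasman and developed in Lurie's \emph{Higher Algebra} (Section 2.2.6 and 4.8.1). The plan is to first identify $\mathbb{Z}/q$ with the appropriate source category, then apply the general construction, and finally unwind the Kan extension formula to recover the explicit sum formula in the statement.

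More precisely, I would proceed as follows. First, regard $\mathbb{Z}/q$ as an ordinary (hence discrete) $\infty$-category whose only morphisms are identities. As an abelian group under addition, this carries a (strict) symmetric monoidal structure, where $\otimes = +$ and the unit is $0$; since every object has only its identity endomorphism, all coherence data is trivial, so this assembles into a symmetric monoidal $\infty$-category in the unique evident way. Next, because $\ccat$ is presentably symmetric monoidal and $\mathbb{Z}/q$ is small, the Day convolution machinery endows $\Fun(\mathbb{Z}/q, \ccat)$ with a symmetric monoidal structure whose tensor product is computed by left Kan extension along the addition map $+: \mathbb{Z}/q \times \mathbb{Z}/q \rightarrow \mathbb{Z}/q$ of the external tensor product:
\[
(X \otimes Y)(\varphi) \;\simeq\; \mathop{\mathrm{colim}}_{(\kappa,\lambda) \in (+ \downarrow \varphi)} X(\kappa) \otimes Y(\lambda).
\]

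Finally, I would unwind this colimit. Because $\mathbb{Z}/q$ is discrete, the comma category $(+ \downarrow \varphi)$ is itself discrete: its objects are pairs $(\kappa,\lambda)$ with $\kappa + \lambda = \varphi$, and its only morphisms are identities. A colimit over a discrete $\infty$-category is simply a coproduct, and coproducts in $\ccat$ can be written as direct sums using the additive structure implicit in a presentably symmetric monoidal setting (when $\ccat$ is additive, or more generally by denoting the coproduct by $\bigoplus$). This yields exactly
\[
(X \otimes Y)(\varphi) \;\simeq\; \bigoplus_{\kappa + \lambda = \varphi} X(\kappa) \otimes Y(\lambda),
\]
as claimed.

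There is no serious obstacle here: the only things to check are the existence hypotheses (smallness of $\mathbb{Z}/q$, which is finite, and presentability of $\ccat$, which is assumed), and the identification of the relevant comma category as discrete. The mild technical point worth flagging is that invoking Glasman--Lurie requires $\ccat$ to admit all small colimits and have $\otimes$ preserve them separately in each variable, which is built into the notion of presentably symmetric monoidal and ensures that the Day convolution does land back in $\Fun(\mathbb{Z}/q, \ccat)$ with a genuine symmetric monoidal $\infty$-categorical structure, not merely a lax one.
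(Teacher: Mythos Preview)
Your proof is correct. Both you and the paper invoke the general Day convolution machinery from \emph{Higher Algebra}, but the packaging differs slightly: the paper identifies $\Fun(\mathbb{Z}/q,\ccat)$ with the tensor product $P(\mathbb{Z}/q^{op})\otimes\ccat$ in $Pr^{L}$, notes that $P(\mathbb{Z}/q^{op})$ inherits a presentable symmetric monoidal structure from $\mathbb{Z}/q$ via \cite{higher_algebra}[4.8.1.12], and then observes that a tensor product of commutative algebra objects in $Pr^{L}$ is again one. You instead appeal directly to the Glasman--Lurie Day convolution construction and compute the tensor product as a left Kan extension along $+\colon\mathbb{Z}/q\times\mathbb{Z}/q\to\mathbb{Z}/q$, then unwind the comma category explicitly. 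Your route has the advantage of making the explicit formula drop out immediately from discreteness of the comma category, whereas the paper leaves the verification of the formula to the reader; the paper's route emphasizes the $Pr^{L}$-module viewpoint, which is what is used immediately afterward to see that the projection $\pi^{\mathbb{Z}/q}_{\ccat}$ is symmetric monoidal.
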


\begin{proof}
Since $\ccat$ is presentably symmetric monoidal, it can be identified with a commutative monoid in the $\infty$-category $Pr^{L}$ of presentable $\infty$-categories and cocontinuous functors \cite{higher_algebra}[4.8]. Then, we have $\Fun(\mathbb{Z}/q, \ccat) \simeq P(\mathbb{Z}/q^{op}) \otimes \ccat$, where the tensor product is taken in $Pr^{L}$ and by $P(\mathbb{Z}/q^{op})$ we denote the $\infty$-category of presheaves of spaces on $\mathbb{Z}/q^{op}$. 

The $\infty$-category $P(\mathbb{Z}/q^{op})$ carries a unique presentable symmetric monoidal structure induced from the one of $\mathbb{Z}/q^{op}$, see \cite{higher_algebra}[4.8.1.12], and hence so does $\Fun(\mathbb{Z}/q, \ccat) \simeq P(\mathbb{Z}/q^{op}) \otimes \ccat$. One can verify this symmetric monoidal structure is given by the informal formula above. 
\end{proof}

Note that the projection from $\mathbb{Z}/q$ onto the trivial group induces a symmetric monoidal functor $\pi_{\ccat}^{\mathbb{Z}/q}: \Fun(\mathbb{Z}/q, \ccat) \rightarrow \ccat$, which is informally given by $X \mapsto \bigoplus _{\varphi \in \mathbb{Z}/q} X(\varphi)$. 

\begin{defin}
\label{defin:pure_diagram_of_comodules_or_spectra}
We say a functor $X: \mathbb{Z}/q \rightarrow \ComodE$ is a \emph{pure diagram} if $X(\varphi)$ is a pure comodule of phase $\varphi$ for all $\varphi \in \mathbb{Z}/q$. We say $Y: \mathbb{Z}/q \rightarrow \spectra_{E}$ is a \emph{pure diagram} if $E_{*}Y$ is. 
\end{defin}
In other words, $X: \mathbb{Z}/q \rightarrow \ComodE$ is a pure diagram when $X(\varphi)$ is concentrated in degrees $d = \varphi$ modulo $q$. Note that the subcategory $\Fun^{pure}(\mathbb{Z}/q, \ComodE)$ of pure diagrams is easily seen to contain the unit and be stable under Day convolution, and so its inherits a symmetric monoidal structure from $\Fun(\mathbb{Z}/q, \ComodE)$.

\begin{lemma}
\label{lemma:pure_diagrams_in_comodules_the_same_as_comodules}
The restriction $\pi_{\ComodE}^{\mathbb{Z}/q}: \Fun^{pure}(\mathbb{Z}/q, \ComodE) \rightarrow \ComodE$ of $\pi_{\ComodE}^{\mathbb{Z}/q}$ to the subcategory of pure diagrams is a symmetric monoidal equivalence.  
\end{lemma}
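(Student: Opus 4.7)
The plan is to exhibit an explicit inverse on underlying categories and then upgrade to a symmetric monoidal equivalence. First, since $\mathbb{Z}/q$ is discrete, evaluation at each object gives an equivalence $\Fun(\mathbb{Z}/q, \ComodE) \simeq \prod_{\varphi \in \mathbb{Z}/q} \ComodE$, under which the full subcategory $\Fun^{pure}(\mathbb{Z}/q, \ComodE)$ corresponds precisely to $\prod_{\varphi} \ComodE^{\varphi}$. Composing with the splitting $\prod_{\varphi} \ComodE^{\varphi} \simeq \ComodE$ of the discussion preceding the definition, whose inverse sends $M$ to its tuple of pure components, identifies $\pi_{\ComodE}^{\mathbb{Z}/q}$ restricted to pure diagrams with this composite. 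Essential surjectivity is then immediate: given $M \in \ComodE$, set $X(\varphi) := M^{\varphi}$ where $M = \bigoplus_\varphi M^{\varphi}$ is the pure decomposition. Full faithfulness also follows: a natural transformation between pure diagrams $X, Y$ is just a tuple of comodule maps $f_\varphi \colon X(\varphi) \to Y(\varphi)$, and any comodule morphism from a pure object of phase $\varphi$ into $\bigoplus_\lambda Y(\lambda)$ lands in $Y(\varphi)$ because comodule maps preserve degree.

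For the symmetric monoidal compatibility, I would first verify that $\Fun^{pure}(\mathbb{Z}/q, \ComodE)$ is stable under Day convolution: since the tensor product in $\ComodE$ is induced from that of $E_*$-modules, each summand $X(\kappa) \otimes_{E_*} Y(\lambda)$ appearing in the Day formula $(X \otimes Y)(\varphi) = \bigoplus_{\kappa + \lambda = \varphi} X(\kappa) \otimes Y(\lambda)$ is concentrated in degrees $\equiv \kappa + \lambda \pmod{q}$, so $(X \otimes Y)(\varphi)$ is pure of phase $\varphi$. The Day unit, the representable at $0 \in \mathbb{Z}/q$ tensored with $E_*$, is also pure. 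Thus $\Fun^{pure}$ inherits a symmetric monoidal structure for which the inclusion into $\Fun(\mathbb{Z}/q, \ComodE)$ is symmetric monoidal.

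It then remains to identify the structural natural equivalence
\[
\pi_{\ComodE}^{\mathbb{Z}/q}(X \otimes Y) \;=\; \bigoplus_{\varphi} \bigoplus_{\kappa + \lambda = \varphi} X(\kappa) \otimes Y(\lambda) \;\simeq\; \Bigl(\bigoplus_{\kappa} X(\kappa)\Bigr) \otimes \Bigl(\bigoplus_{\lambda} Y(\lambda)\Bigr) \;=\; \pi_{\ComodE}^{\mathbb{Z}/q}(X) \otimes \pi_{\ComodE}^{\mathbb{Z}/q}(Y),
\]
which is just distributivity of tensor product over direct sums in $\ComodE$, with the coherent symmetric monoidal structure already put on $\pi_{\ComodE}^{\mathbb{Z}/q}$ by the text. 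The cleanest way to do this, which I would use, is to appeal to the universal property of Day convolution from \cite{higher_algebra}[4.8.1.12]: the symmetric monoidal projection $\mathbb{Z}/q \to \ast$ together with the identity of $\ComodE$ produces a unique colimit-preserving symmetric monoidal functor $P(\mathbb{Z}/q^{op}) \otimes \ComodE \to \ComodE$, and one checks that this is $\pi_{\ComodE}^{\mathbb{Z}/q}$ with the structure maps given above. Restricting to $\Fun^{pure}$, for which the underlying functor is already an equivalence, gives the result.

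The main obstacle is not any single calculation but the bookkeeping required to promote an ordinary-category argument to a symmetric monoidal equivalence of presentable $\infty$-categories; everything else reduces to distributivity and degree arithmetic modulo $q$.
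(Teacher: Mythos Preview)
Your proposal is correct and follows the same underlying idea as the paper: the equivalence on underlying categories is exactly the statement that every comodule decomposes uniquely as a direct sum of its pure components, and the symmetric monoidal structure on $\pi_{\ComodE}^{\mathbb{Z}/q}$ is the one already supplied by functoriality in the projection $\mathbb{Z}/q \to \ast$. The paper's own proof is a one-line appeal to this decomposition, whereas you spell out essential surjectivity, full faithfulness, closure under Day convolution, and the distributivity isomorphism explicitly; this is more detail than the paper gives (the closure under Day convolution and the symmetric monoidal structure on $\pi_{\ComodE}^{\mathbb{Z}/q}$ are asserted without proof in the paragraph preceding the lemma), but the route is the same.
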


\begin{proof}
Since $\pi_{\ComodE}^{\mathbb{Z}/q}$ is given by the fomula $X \mapsto \bigoplus _{\varphi \in \mathbb{Z}/q} X(\varphi)$, to say that it is an equivalence is the same as saying that any comodule is uniquely a sum of pure ones, which is clear. 
\end{proof}

\begin{thm}
\label{thm:bousfield_splitting_is_symmetric_monoidal}
The restriction $\beta: \ComodE^{proj} \rightarrow h_{k}\spectra_{E}$ of the Bousfield splitting to the category of $E_{*}$-projective comodules carries a symmetric monoidal structure.
\end{thm}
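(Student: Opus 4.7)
The plan is to exhibit $\beta$, restricted to $\ComodE^{proj}$, as a composite of three symmetric monoidal functors built using the Day convolution framework of the preceding lemmas. Explicitly, I would write it as
\[
\ComodE^{proj} \xrightarrow{\;\iota\;} \Fun^{pure}(\mathbb{Z}/q, \ComodE^{proj}) \xrightarrow{\;R_{*}\;} \Fun^{pure}(\mathbb{Z}/q, h_{k}\spectra_{E}) \xrightarrow{\;\pi\;} h_{k}\spectra_{E},
\]
where $\iota$ is the inverse of the symmetric monoidal equivalence of Lemma \ref{lemma:pure_diagrams_in_comodules_the_same_as_comodules} (restricted to projective entries, which is still an equivalence because the pure decomposition of a projective comodule is componentwise projective), $R_{*}$ is componentwise application of the realization equivalences $R^{\varphi}$ of Definition \ref{defin:bousfield_splitting}, and $\pi$ is the restriction of $\pi^{\mathbb{Z}/q}_{h_{k}\spectra_{E}}$ to pure diagrams. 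Since $\iota$ is already symmetric monoidal, the task reduces to endowing $R_{*}$ and $\pi$ with symmetric monoidal structures.

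For $\pi$, I would first check that the Day convolution of pure diagrams of $E$-local spectra whose entries have projective $E_{*}$-homology is again of this kind. This is immediate from K\"unneth: if $X(\kappa)$ and $Y(\lambda)$ are pure of their phases and $E_{*}X(\kappa)$ is $E_{*}$-projective, then $E_{*}(X(\kappa) \wedge Y(\lambda)) \simeq E_{*}X(\kappa) \otimes_{E_{*}} E_{*}Y(\lambda)$ is concentrated in degrees $\kappa+\lambda$ modulo $q$. That $\pi$ intertwines Day convolution with $\wedge$ is then formal, as $\pi^{\mathbb{Z}/q}_{h_{k}\spectra_{E}}$ is symmetric monoidal and $\wedge$ distributes over $\bigvee$.

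The core input for $R_{*}$ is the pointwise identity: for pure $E_{*}$-projective comodules $M, N$ of phases $\kappa, \lambda$, the K\"unneth formula gives
\[
E_{*}\bigl(R^{\kappa}(M) \wedge R^{\lambda}(N)\bigr) \;\simeq\; M \otimes_{E_{*}} N \;\simeq\; M \otimes_{E_{*}E} N,
\]
the second isomorphism being the standard fact that when one tensor factor is $E_{*}$-projective the comodule tensor product agrees with the underlying $E_{*}$-tensor product. Since both sides are pure of phase $\kappa+\lambda$, Theorem \ref{thm:homology_on_pure_spectra_gives_an_equivalence_with_homotopy_category} yields a canonical identification $R^{\kappa+\lambda}(M \otimes N) \simeq R^{\kappa}(M) \wedge R^{\lambda}(N)$ in $h_{k}\spectra_{E}$, along with $R^{0}(E_{*}) \simeq S^{0}_{E}$ as units.

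The main obstacle is promoting this binary compatibility to a fully coherent symmetric monoidal structure. My preferred route is to transport the symmetric monoidal structure on $\Fun^{pure}(\mathbb{Z}/q, \ComodE^{proj})$ across the componentwise equivalence $R_{*}$: because each $R^{\varphi}$ is an equivalence of categories, all associativity, unitality and symmetry coherences transport automatically, and the only content to verify is that the transported tensor product agrees with the Day convolution induced by $\wedge$ on the target, which reduces exactly to the pointwise K\"unneth comparison above. An alternative, more intrinsic, approach uses the universal property of Day convolution as a symmetric monoidal left Kan extension along $\mathbb{Z}/q \hookrightarrow \Fun(\mathbb{Z}/q, \ComodE^{proj})$, reducing the construction of $R_{*}$ as a symmetric monoidal functor to a symmetric monoidal functor out of the $\mathbb{Z}/q$-graded category of projective pure comodules, which is precisely the data assembled by the collection of equivalences $\{R^{\varphi}\}_{\varphi \in \mathbb{Z}/q}$ together with the K\"unneth identification above.
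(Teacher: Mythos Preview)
Your overall strategy coincides with the paper's: factor $\beta$ through the pure-diagram categories, invoke \textbf{Lemma \ref{lemma:pure_diagrams_in_comodules_the_same_as_comodules}} for $\iota$, and use the symmetric monoidality of $\pi^{\mathbb{Z}/q}$. The only substantive difference is in how you endow $R_{*}$ with its symmetric monoidal structure.

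The paper handles this step more cleanly by working in the other direction: rather than building $R_{*}$ as a symmetric monoidal functor from scratch, it observes that the \emph{inverse} functor $E_{*}: \Fun^{pure}(\mathbb{Z}/q, \spectra_{E}^{proj}) \to \Fun^{pure}(\mathbb{Z}/q, \ComodE^{proj})$ is already symmetric monoidal, since $E_{*}$ is symmetric monoidal on projective spectra by K\"unneth and Day convolution is functorial in symmetric monoidal functors. Then \textbf{Theorem \ref{thm:homology_on_pure_spectra_gives_an_equivalence_with_homotopy_category}} says this map exhibits the target as the homotopy $k$-category of the source, so one obtains a symmetric monoidal equivalence $h_{k}\Fun^{pure}(\mathbb{Z}/q, \spectra_{E}^{proj}) \simeq \Fun^{pure}(\mathbb{Z}/q, \ComodE^{proj})$ whose inverse is your $R_{*}$. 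The symmetric monoidal structure on $R_{*}$ then comes for free as the inverse of a symmetric monoidal equivalence.

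Your ``transport and compare'' route is correct in principle, but the step ``the only content to verify is that the transported tensor product agrees with the Day convolution on the target'' understates what is needed: one must check that the full symmetric monoidal structures (associators, unitors, symmetry) agree, not just the underlying bifunctor. Doing this by hand is possible but tedious. The paper's observation that you are inverting an already-symmetric-monoidal equivalence sidesteps this entirely, and I would recommend adopting it.
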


\begin{proof}
First, observe that by the K\"unneth formula the restriction $E_{*}: \spectra_{E}^{proj} \rightarrow \ComodE^{proj}$ of the homology functor to spectra with projective homology is symmetric monoidal. By \textbf{Theorem \ref{thm:homology_on_pure_spectra_gives_an_equivalence_with_homotopy_category}}, applied to each phase $\varphi \in \mathbb{Z}/q$ separately, the symmetric monoidal functor

\begin{center}
$E_{*}: \Fun^{pure}(\mathbb{Z}/q, \spectra_{E}^{proj}) \rightarrow \Fun^{pure}(\mathbb{Z}/q, \ComodE^{proj})$
\end{center}
between the $\infty$-categories of pure functors in the sense of \textbf{Definition \ref{defin:pure_diagram_of_comodules_or_spectra}} identifies its target with the homotopy $k$-category of the source. It follows that we have a chain of symmetric monoidal equivalences

\begin{center}
$h_{k} \Fun^{pure}(\mathbb{Z}/q, \spectra_{E}^{proj}) \simeq \Fun^{pure}(\mathbb{Z}/q, \ComodE^{proj}) \simeq \ComodE^{proj}$,
\end{center}
where the second one is \textbf{Lemma \ref{lemma:pure_diagrams_in_comodules_the_same_as_comodules}}. Thus, the composite defines a symmetric monoidal equivalence $\alpha: \ComodE^{proj} \rightarrow h_{k}\Fun^{pure}(\mathbb{Z}/q, \spectra_{E}^{proj})$. 

Then, one verifies easily that the Bousfield splitting $\beta$ can be identified with $\pi_{\spectra_{E}}^{\mathbb{Z}/q} \circ \alpha$, where $\pi_{\spectra_{E}}^{\mathbb{Z}/q}: h_{k} \Fun^{pure}(\mathbb{Z}/q, \spectra_{E}^{proj}) \rightarrow h_{k}\spectra_{E}^{proj}$ is induced by the projection from $\mathbb{Z}/q$ to the trivial group. It follows that $\beta$ is symmetric monoidal, being a composite of symmetric monoidal functors. 
\end{proof}

\section{Derived $\infty$-category of differential comodules}
\label{section:derived_inftycat_of_differential_comodules}
In this section we introduce the algebraic analogue of the $E$-local category, what we will call the \emph{derived $\infty$-category of $E_{*}E$}, also referred to in the literature as the \emph{periodic derived $\infty$-category}.

We will only need the constructions that appear here for the Hopf algebroid $E_{*}E$ associated to a Landweber exact homology theory, so that's the notation we use, but everything we say here holds for an arbitrary Hopf algebroid. 

\begin{defin}
A \emph{differential $E_{*}E$-comodule} is a pair $(M, d)$, where $M \in \ComodE$ and $d: M \rightarrow M$ is an endomorphism of degree $1$ satisfying $d^{2} = 0$. We denote the abelian category of differential comodules by $d\ComodE$. 
\end{defin}
We would like to equip the category of differential comodules with a model structure. To do so, it is convenient to think of a differential comodule as defining a whole chain complex of comodules, one where all the terms are given by shifts of $M$. 
\begin{defin}
A \emph{periodic chain complex} is a pair $(C^{\bullet}, \phi)$, where $C_{\bullet} \in \mathcal{C}h(\ComodE)$  is a chain complex and $\phi_{\bullet}: C_{\bullet-1} \rightarrow C_{\bullet}[1]$ is an isomorphism between the internal and external shifts. We denote the abelian category of periodic chain complexes by $\mathcal{C}h^{per}(\ComodE)$. 
\end{defin}
The notion of a periodic chain complex is in fact equivalent to that of a differential comodule, as we now show. 

\begin{prop}
\label{prop:differential_comodules_and_periodic_chain_complexes_are_equivalent}
The functor $f: \mathcal{C}h^{per}(\ComodE) \rightarrow d\ComodE$ defined by 

\begin{center}
$f(C_{\bullet}, \phi_{\bullet}) =  (C_{0}, \phi_{0} \circ d_{0}: C_{0} \rightarrow C_{0}[1])$ 
\end{center}
is an equivalence of categories. 
\end{prop}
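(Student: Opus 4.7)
My plan is to construct an explicit inverse functor $g\colon d\ComodE \to \mathcal{C}h^{per}(\ComodE)$. Given a differential comodule $(M,d)$, I would set $g(M,d)_n := M[-n]$ (internal shift), with differential $d_n\colon M[-n] \to M[-(n-1)]$ given by the map $d$, using that $d\colon M \to M$ has internal degree $+1$ and therefore naturally defines a map of graded comodules $M[-n] \to M[-(n-1)]$. The periodicity isomorphism $\phi_n\colon C_{n-1} \to C_n[1]$ would then be the tautological identification $M[-(n-1)] = M[-n][1]$ (up to the sign convention making $\phi_\bullet$ a chain map). The condition $d_{n-1}\circ d_n = 0$ is immediate from $d^2 = 0$, so this genuinely defines a periodic chain complex, and a morphism $\psi\colon (M,d)\to(M',d')$ extends termwise by its shifts $\psi[-n]$.

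Next I would verify the two composites. For $f\circ g$: by construction $g(M,d)_0 = M$, $d_0$ equals $d$ (viewed as $M \to M[1]$), and $\phi_0$ is the identity, so $\phi_0 \circ d_0 = d$ and $f(g(M,d)) = (M,d)$ on the nose. For $g \circ f$: starting from $(C_\bullet,\phi_\bullet)$, iterating the periodicity isomorphisms $\phi_n$ produces canonical comparison maps $C_n \xrightarrow{\sim} C_0[-n]$ for every $n \in \mathbb{Z}$, which assemble into a natural isomorphism $g(f(C_\bullet,\phi_\bullet)) \cong (C_\bullet,\phi_\bullet)$. The non-trivial point here is that this natural isomorphism respects both the differentials and the periodicity data, which reduces to the requirement that each $\phi_\bullet$ is a chain map; in other words, all the $d_n$ are forced to be determined by $d_0$ once the periodicity data is fixed, so compatibility is automatic.

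The argument is mostly bookkeeping, and the only real subtlety is keeping track of the two kinds of shifts (internal shift of a comodule versus external shift of a chain complex) together with the Koszul sign that appears in the differential of $C_\bullet[1]$. The main obstacle, such as it is, will simply be choosing sign conventions so that the tautological identification $M[-(n-1)] = M[-n][1]$ defines a chain map $C_{\bullet-1} \to C_\bullet[1]$ rather than one differing by a sign; once the conventions are fixed the equivalence is essentially formal.
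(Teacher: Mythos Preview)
Your proposal is correct and follows essentially the same approach as the paper: the paper constructs the inverse $g$ by $g(M,d)_k = M[-k]$ with differential given by shifts of $d$, and simply asserts it is an explicit inverse, which is exactly what you spell out in more detail.
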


\begin{proof}
One can verify that $g: d\ComodE \rightarrow \mathcal{C}h^{per}(\ComodE)$ defined by $g(M, d)_{k} = M[-k]$, with the differential given by shifts of $d$, is an explicit inverse equivalence to $f$. 
\end{proof}
We have decided to work with differential objects, rather than periodic chain complexes, since they are more natural and are better established in algebra. On the other hand, the literature on exotic equivalences is largely written in terms of periodic chain complexes, starting with the work of Franke \cite{franke1996uniqueness}. 

Barnes and Roitzheim show that the category $\mathcal{C}h^{per}(\ComodE)$ can be identified with the category of modules in chain complexes over the \emph{periodicized unit} $P(\monunit)$ \cite{barnes2011monoidality}. Using this identification one equips the category of periodic chain complexes with a tensor product and shows that it inherits a compatible model structure from $\mathcal{C}h(\ComodE)$.

In our case, there is an isomorphism $P(\monunit) \simeq E_{*}[\tau^{\pm 1}]$, where $| \tau | = (1, -1)$ and we equip the latter with the zero differential. To see this, notice that to give a structure of a module over $E_{*}[\tau^{\pm 1}]$ it is enough to specify the action of $\tau$, which can be any isomorphism of degree $(1, -1)$, so that datum of a module is equivalent to one of a periodic chain complex. 

\begin{rem}
The paper \cite{barnes2011monoidality} contains a subtle mistake, since the periodicized unit $P(\monunit)$ is not always commutative in the generality stated there. It is, however, commutative in the present case, as we see from the above description, since $\tau$ is in even total degree. 
\end{rem}

Through \textbf{Proposition \ref{prop:differential_comodules_and_periodic_chain_complexes_are_equivalent}}, one can transfer this symmetric monoidal model structure to differential comodules. Explicitly, a map $(M, d_{M}) \rightarrow (N, d_{N})$ of differential comodules is a weak equivalence if and only if it an isomorphism, that is, when it induces an isomorphism $ker(d_{M}) / im(d_{M}) \simeq ker(d_{N}) / im(d_{N})$, and the tensor product lifts the one on comodules. 

The monoidality of the structure ensures that the tensor product preserves weak equivalences between cofibrant objects, allowing us to localize category of the latter and obtain the underlying symmetric monoidal $\infty$-category in the sense of \cite{higher_algebra}[4.1.3.6]. 

\begin{defin}
\label{defin:derived_category_of_ee}
We call the underlying symmetric monoidal $\infty$-category of $d\ComodE$ the \emph{derived $\infty$-category of $E_{*}E$} and denote it by $\dcat(E_{*}E)$. 
\end{defin}

Note that by \textbf{Proposition \ref{prop:differential_comodules_and_periodic_chain_complexes_are_equivalent}}, $\dcat(E_{*}E)$ is canonically equivalent to the \emph{periodic} derived $\infty$-category, which is the symmetric monoidal $\infty$-category underlying $\mathcal{C}h^{per}(\ComodE)$. It is the latter $\infty$-category that was considered by Barthel, Schlank and Stapleton in their proof of asymptotic algebraicity \cite{barthel2017chromatic}.

\begin{warning}
It should be stressed that $\dcat(E_{*}E)$ is an $\infty$-category distinct from $\dcat(\ComodE)$, the derived $\infty$-category of the abelian category $\ComodE$. Rather, the relation between the two is given by \textbf{Remark \ref{rem:derived_cat_of_differential_comodules_as_modules_in_the_derived_category}}.
\end{warning}

\begin{rem}
\label{rem:derived_cat_of_differential_comodules_as_modules_in_the_derived_category}
By \textbf{Proposition \ref{prop:differential_comodules_and_periodic_chain_complexes_are_equivalent}} and the work of Barnes and Roitzheim, $\dcat(E_{*}E)$ is equivalent to the underlying $\infty$-category of $P(\monunit)$-modules in chain complexes. Then, since $P(\monunit)$ is cofibrant, \cite{higher_algebra}[4.3.3.17] implies that we have a symmetric monoidal equivalence

\begin{center}
$\dcat(E_{*}E) \simeq \Mod_{P(\monunit)}(\dcat(\ComodE))$
\end{center}
between the derived $\infty$-category of differential comodules and modules over $P(\monunit)$ in the derived $\infty$-category of $\ComodE$. 
\end{rem}

\section{Goerss-Hopkins theory} 
\label{section:goerss_hopkins_theory}

In this section we construct the Goerss-Hopkins tower associated to a homology theory $E$, as well as its algebraic analogue \cite{moduli_spaces_of_commutative_ring_spectra}, \cite{moduli_problems_for_structured_ring_spectra}. We then show that when the prime is large enough, the layers of the towers stabilize in a precise sense. 

\begin{rem}
In this note we will only make use of \emph{linear} Goerss-Hopkins theory; that is, we will be working with synthetic spectra rather than with synthetic commutative ring spectra, and the obstructions will be valued in $\Ext$-groups. 

The obstructions alone could be alternatively obtained using the classical Toda obstruction theory \cite{margolis2011spectra}[16.3]. However, it is crucial to our approach that Goerss-Hopkins thoery yields not only obstructions, but well-behaved $\infty$-categories.
\end{rem}

By a \emph{synthetic spectrum} we mean a hypercomplete, connective synthetic spectrum based on $E$ in the sense of \cite{pstrkagowski2018synthetic}, and we denote their $\infty$-category by $\synspectra$. In other words, a synthetic spectrum is a hypercomplete, spherical sheaf of spaces on the site $\spectra_{E}^{fp}$ of finite, $E_{*}$-projective spectra. 

The $\infty$-category $\synspectra$ is Grothendieck prestable in the sense of \cite{lurie_spectral_algebraic_geometry}[C.1.4.2]; in other words, it is equivalent to the connective part of a presentable stable $\infty$-category equipped with a compatible $t$-structure. 

There is an equivalence $\synspectra^{\heartsuit} \simeq \ComodE$ between the heart of $\synspectra$, that is, the subcategory of discrete objects, and the abelian category of $E_{*}E$-comodules \cite{pstrkagowski2018synthetic}[4.16]. The identification of the heart defines for any synthetic spectrum $X$ a sequence of non-negatively graded homotopy groups valued in comodules, which we will denote by $\pi_{*}X$. These homotopy groups detect equivalences, since we work with hypercomplete synthetic spectra. 

\begin{rem}
Synthetic spectra can be considered as topological objects on their own right, in which case the notation $\pi_{*}X$ is better reserved for the geometric homotopy groups, that is, those obtained as homotopy classes of maps from the sphere, as was done in \cite{pstrkagowski2018synthetic}. 

In this note, we will not focus on geometric properties of $\synspectra$, instead using as much as possible only the abstract properties. In particular, $\pi_{*}X$ will always denote the $t$-structure homotopy groups valued in comodules.
\end{rem}

The $\infty$-category $\synspectra$ is graded symmetric monoidal, with the grading inducing the internal shift on comodule homotopy groups, and has a monoidal unit, which we will denote by $\monunit$. Its homotopy groups are given by the polynomial algebra $\pi_{*} \monunit \simeq E_{*}[\tau]$, where $| \tau | = (1, -1)$.

We have a tower $\monunit \rightarrow \ldots \rightarrow \monunitt{1} \rightarrow \monunitt{0}$ of commutative algebras given by Postnikov truncations of the unit, which in turn induces a tower of module $\infty$-categories 

\begin{center}
$\synspectra \rightarrow \ldots \rightarrow \Mod_{\monunitt{1}}(\synspectra) \rightarrow \Mod_{\monunitt{0}}(\synspectra)$;
\end{center}
Goerss-Hopkins theory arises from a detailed study of this tower. The identification of the heart $\synspectra^{\heartsuit} \simeq \ComodE$ extends to an equivalence $\Mod_{\monunitt{0}}(\synspectra) \simeq \dcat(\ComodE)_{\geq 0}$ \cite{pstrkagowski2018synthetic}[4.54], where the latter is the connective derived $\infty$-category of comodules. Thus,  the bottom of the tower always has a purely algebraic description, and so the tower can be interpreted as measuring the passage from algebra to topology. 

There's a functor $\nu: \spectra_{E} \rightarrow \synspectra$ called the \emph{synthetic analogue} and one can show that it is an embedding of $\infty$-categories  \cite{pstrkagowski2018synthetic}[4.37]. The essential image of $\nu$ is the $\infty$-category of those synthetic spectra $X$ such that $\monunitt{0} \otimes X$ is discrete, equivalently, such that $\pi_{*} X \simeq \pi_{*} \monunit \otimes _{\pi_{0} \monunit} \pi_{0} X$. This motivates the following definition, which is central to Goerss-Hopkins theory. 

\begin{defin}
\label{defin:topological_potential_l_stage}
A \emph{topological potential $l$-stage} is a $\monunitt{l}$-module $X$ in $\synspectra$ such that $\monunitt{0} \otimes _{\monunitt{l}} X$ is discrete. We denote the $\infty$-category of topological potential $l$-stages by $\mathcal{M}^{top}_{l}$
\end{defin}
The extension of scalars provides functors $u_{k}: \mathcal{M}_{l} \rightarrow \mathcal{M}_{k}$ for any $l \geq k$, these then assemble into a tower of $\infty$-categories, which we call the \emph{Goerss-Hopkins tower}. The main properties of this tower are as follows. 

\begin{prop}
\label{prop:properties_of_the_topological_gh_tower}
The tower $\mathcal{M}^{top}_{\infty} \rightarrow \ldots \rightarrow \mathcal{M}^{top}_{1} \rightarrow \mathcal{M}^{top}_{0}$ of $\infty$-categories of topological potential stages and extension of scalars functors has the following properties: 

\begin{enumerate}[label=($\spadesuit$\arabic*)]
\item $\mathcal{M}^{top}_{l}$ is an $(l+1)$-category
\item the functor $\pi_{0}: \mathcal{M}^{top}_{0} \rightarrow \ComodE$ is an equivalence
\item if $X \in \mathcal{M}^{top}_{l-1}$, then there exists an obstruction in $\Ext_{E_{*}E}^{l+2, l}(u_{0} X, u_{0} X)$ which vanishes if and only if $X$ can be lifted to a potential $l$-stage; that is, if there exists $\widetilde{X} \in \mathcal{M}^{top}_{l}$ such that $u_{l-1} \widetilde{X} \simeq X$
\item for any $X, Y \in \mathcal{M}_{l}^{top}$ with $l \geq 1$ there exists a fibre sequence 

\begin{center}
$\map_{\mathcal{M}^{top}_{l}}(X, Y) \rightarrow \map_{\mathcal{M}^{top}_{l-1}}(u_{l-1} X, u_{l-1}Y) \rightarrow \map_{\dcat(\ComodE)}(u_{0} X, \Sigma^{l+1} u_{0} Y [-l])$,
\end{center}

\item the functors $u_{l}: \mathcal{M}^{top}_{\infty} \rightarrow \mathcal{M}^{top}_{l}$ induce an equivalence $\mathcal{M}^{top}_{\infty} \simeq \varprojlim \mathcal{M}^{top}_{l}$

\item the synthetic analogue construction restricts to an equivalence $\nu: \spectra_{E} \rightarrow \mathcal{M}^{top}_{\infty}$ along which the functor $u_{0}: \mathcal{M}^{top}_{\infty} \rightarrow \mathcal{M}_{0}$ can be identified with $E_{*}: \spectra_{E} \rightarrow \ComodE$
\end{enumerate}
\end{prop}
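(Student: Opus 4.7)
The strategy I would use is to observe that the entire tower is controlled by the Postnikov tower of the monoidal unit $\monunit$ in $\synspectra$, and that all six properties reduce to standard module theory over truncated $E_{\infty}$-algebras in a Grothendieck prestable $\infty$-category with a compatible $t$-structure. The essential inputs are: $\pi_{*}\monunit \simeq E_{*}[\tau]$ with $|\tau|=(1,-1)$; the identification $\Mod_{\monunitt{0}}(\synspectra) \simeq \dcat(\ComodE)_{\geq 0}$ cited in the exposition; hypercompleteness of $\synspectra$, giving $\monunit \simeq \varprojlim \monunitt{l}$; and the synthetic analogue embedding $\nu: \spectra_{E} \hookrightarrow \synspectra$ characterised by the condition $\pi_{*}X \simeq \pi_{*}\monunit \otimes_{\pi_{0}\monunit} \pi_{0}X$.

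I would first dispose of the formal items $(\spadesuit 1)$, $(\spadesuit 2)$, $(\spadesuit 5)$, $(\spadesuit 6)$. For $(\spadesuit 1)$, the key point is that $\monunitt{l}$ is an $l$-truncated $E_{\infty}$-algebra in $\synspectra$, so its modules form an $(l+1)$-category; $\mathcal{M}_{l}^{top}$ is a full subcategory and inherits this property. For $(\spadesuit 2)$, I would check that the potential $0$-stages are precisely the discrete $\monunitt{0}$-modules, which correspond to $\synspectra^{\heartsuit} \simeq \ComodE$ under the equivalence $\Mod_{\monunitt{0}}(\synspectra) \simeq \dcat(\ComodE)_{\geq 0}$. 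For $(\spadesuit 5)$, I would use that hypercompleteness of $\synspectra$ upgrades to Postnikov completeness in the prestable setting, so $\monunit \simeq \varprojlim \monunitt{l}$ and taking modules commutes with such inverse limits of connective algebras; the potential-stage condition $\monunitt{0} \otimes X \in \synspectra^{\heartsuit}$ is preserved under the limit. For $(\spadesuit 6)$, I would combine the synthetic analogue theorem with the observation that $\nu X$ lies in the essential image precisely when it is $\tau$-invertible, which in turn is equivalent to being a potential $\infty$-stage by unwinding the tensor product spectral sequence for $\monunitt{0} \otimes_{\monunit} \nu X$.

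The technical heart lies in $(\spadesuit 3)$ and $(\spadesuit 4)$, which I would handle together by analysing the square-zero extension $\monunitt{l} \rightarrow \monunitt{l-1}$. Its fibre, as a $\monunitt{l-1}$-module, is a shift of $H\pi_{l}\monunit$, and one computes $\pi_{l}\monunit \simeq E_{*}\cdot \tau^{l}$, which as a comodule is $E_{*}$ shifted internally by $-l$ on account of $|\tau|=(1,-1)$. Standard deformation theory for modules along a square-zero extension then yields a fibre sequence of mapping spaces of the shape displayed in $(\spadesuit 4)$, with the third term rewritten in $\dcat(\ComodE)$ via $(\spadesuit 2)$; the external homological shift combined with the internal shift by $-l$ produces exactly the term $\Sigma^{l+1} u_{0}Y[-l]$. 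Extracting the obstruction to existence of a lift as the boundary class of this fibre sequence then lands it in $\pi_{-1}$ of the third term, which unwinds to $\Ext^{l+2,l}_{E_{*}E}(u_{0}X, u_{0}X)$, giving $(\spadesuit 3)$.

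The main obstacle I anticipate is the careful bookkeeping of bigradings in $(\spadesuit 3)$ and $(\spadesuit 4)$: the grading on $\synspectra$ entangles external homological shifts with internal comodule shifts through the class $\tau$, so extracting the specific Ext-bidegree $(l+2, l)$ requires tracking how the $l$-th Postnikov piece of $\monunit$ interacts with both shifts simultaneously. Once this accounting is in place, the rest is a routine application of module theory in Grothendieck prestable $\infty$-categories and the inputs cited above.
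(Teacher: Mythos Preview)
The paper's own proof is a citation to \cite{moduli_problems_for_structured_ring_spectra}[3.3.2--3.3.5], with the caveat that the Postnikov convergence underlying $(\spadesuit 5)$ is not fully established there and is deferred to \cite{abstract_gh_theory}. Your sketch is essentially an outline of what those references do, so in that sense your approach and the paper's agree.

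Two points deserve correction, however. First, your argument for $(\spadesuit 1)$ is not right: it is \emph{not} true that modules over an $l$-truncated algebra in $\synspectra$ form an $(l+1)$-category --- for instance $\Mod_{\monunitt{0}}(\synspectra) \simeq \dcat(\ComodE)_{\geq 0}$ is certainly not a $1$-category. The correct reason is that the potential-$l$-stage condition forces $X$ itself to be $l$-truncated (the spectral sequence for $\monunitt{0} \otimes_{\monunitt{l}} X$ shows $\pi_{k}X = 0$ for $k > l$), and mapping spaces between $l$-truncated objects in a prestable $\infty$-category are $l$-truncated.

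Second, and more substantively, your treatment of $(\spadesuit 5)$ glosses over exactly the point the paper flags as incomplete in the literature. You write that ``hypercompleteness of $\synspectra$ upgrades to Postnikov completeness in the prestable setting'', but hypercompleteness and Postnikov completeness are distinct conditions and the implication is not automatic. The paper explicitly notes that a separate argument for Postnikov convergence of $\synspectra$ is required and postpones it to \cite{abstract_gh_theory}; your sketch does not supply one. The remaining items $(\spadesuit 2)$--$(\spadesuit 4)$ and $(\spadesuit 6)$ are handled correctly, and your identification of the square-zero extension $\monunitt{l} \to \monunitt{l-1}$ with fibre $\Sigma^{l}\monunitt{0}[-l]$ as the source of the obstruction theory is exactly the mechanism used in the cited references.
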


\begin{proof}
This is \cite{moduli_problems_for_structured_ring_spectra}[3.3.2, 3.3.4, 3.3.5]. Note that the argument establishing $(\spadesuit 5)$, namely \cite{moduli_problems_for_structured_ring_spectra}[3.3.3], is incomplete, and a separate argument proving the Postnikov convergence of $\synspectra$ is needed. A complete account will follow in \cite{abstract_gh_theory}. 
\end{proof}
The algebraic analogue of the $E$-local category is given by derived $\infty$-category $\dcat(E_{*}E)$, the underlying $\infty$-category of the category $d\ComodE$ of differential comodules, which we have introduced in \S\ref{section:derived_inftycat_of_differential_comodules}. We will now construct an algebraic Goerss-Hopkins tower, one which has the same formal properties as the topological one, but has $\dcat(E_{*}E)$ as its limit. 

To construct the Goerss-Hopkins tower, we need an $\infty$-category playing the role of synthetic spectra, that is, a prestable $\infty$-category with a certain properties. By \textbf{Remark \ref{rem:derived_cat_of_differential_comodules_as_modules_in_the_derived_category}}, there is an equivalence $\dcat(E_{*}E) \simeq \Mod_{P(\monunit)}(\dcat(\ComodE))$, where the latter denotes $P(\monunit)$-modules in the derived $\infty$-category of the abelian category $\ComodE$. Here, $P(\monunit)$ is a certain commutative algebra with homotopy groups $\pi_{*} P(\monunit) \simeq E_{*}[\tau^{\pm 1}]$ constructed by Barnes and Roitzheim. 

\begin{defin}
\label{defin:periodicity_algebra}
We call the connective cover of $P(\monunit)$ the \emph{periodicity algebra} and denote it by $P := P(\monunit)_{\geq 0}$. It is a commutative algebra in the connective derived $\infty$-category $D(\ComodE)_{\geq 0}$ with homotopy groups $\pi_{*} P \simeq E_{*}[\tau]$. 
\end{defin}

\begin{rem}
\label{algebra_p_equivalent_to_the_free_algebra}
As an associative algebra in $\dcat(\ComodE)$, the periodicity algebra $P$ is equivalent to the free algebra on $\Sigma E_{*}[-1]$, the equivalence induced by the inclusion $\Sigma E_{*}[-1] \hookrightarrow P(\monunit)$ into the Barnes-Roitzheim algebra, which factors through $P$. 

However, this identification does not supply the commutative structure, which comes from the strictly commutative model in chain complexes of comodules. 
\end{rem}

Observe that the $\infty$-category $\Mod_{P}(\dcat(\ComodE)_{\geq 0})$ of connective $P$-modules is formally analogous to the $\infty$-category of synthetic spectra. That is, it is Grothendieck prestable, its heart is canonically equivalent to $\ComodE$, and it is symmetric monoidal with the homotopy groups of the unit given by $\pi_{*} P \simeq E_{*}[\tau]$. 

The Barnes-Roitzheim algebra $P(\monunit)$ can be recovered as a localization of $P$, and there is a fully faithful embedding 

\begin{center}
$\dcat(E_{*}E) \simeq \Mod_{P(\monunit)}(\dcat(\ComodE)) \hookrightarrow \Mod_{P}(\dcat(\ComodE)_{\geq 0})$
\end{center}
where the arrow is given by taking connective covers. One calculates with no difficulty that the essential image of that embedding is given by the $\infty$-category of those $P$-modules $M$ such that $P_{\leq 0} \otimes _{P} M$ is discrete, equivalently, that $\pi_{*} M \simeq \pi_{*} P \otimes _{\pi_{0} P} \pi_{0} M$. This suggests the following algebraic analogue of \textbf{Definition \ref{defin:topological_potential_l_stage}}. 

\begin{defin}
\label{defin:algebraic_potential_l_stage}
We say that a connective $P_{\leq l}$-module $M$ is an \emph{algebraic potential $l$-stage} if $P_{\leq 0} \otimes _{P_{\leq l}} M$ is discrete. We denote the $\infty$-category of algebraic potential $l$-stages by $\mathcal{M}^{alg}_{l}$.
\end{defin}
Clearly, extension of scalars then induces functors $u_{k}: \mathcal{M}_{l}^{alg} \rightarrow \mathcal{M}_{k}^{alg}$ for any $l \geq k$, leading to a tower of $\infty$-categories which we call the \emph{algebraic Goerss-Hopkins tower}. Its main properties are as follows. 

\begin{prop}
\label{prop:properties_of_the_algebraic_gh_tower}
The tower $\mathcal{M}^{alg}_{\infty} \rightarrow \ldots \rightarrow \mathcal{M}^{alg}_{1} \rightarrow \mathcal{M}^{alg}_{0}$ of $\infty$-categories of algebraic potential stages and extension of scalars functors has the following properties: 

\begin{enumerate}[label=($\spadesuit$\arabic*)]
\item $\mathcal{M}^{alg}_{l}$ is an $(l+1)$-category
\item the functor $\pi_{0}: \mathcal{M}^{alg}_{0} \rightarrow \ComodE$ is an equivalence
\item if $M \in \mathcal{M}^{alg}_{l-1}$, then there exists an obstruction in $\Ext_{E_{*}E}^{l+2, l}(u_{0} M, u_{0} M)$ which vanishes if and only if $M$ can be lifted to an algebraic potential $l$-stage, that is, there exists $\widetilde{M} \in \mathcal{M}^{alg}_{l}$ such that $u_{l-1} \widetilde{M} \simeq M$
\item for any $M, N \in \mathcal{M}^{alg}_{l}$ with $l \geq 1$ there exists a fibre sequence 

\begin{center}
$\map_{\mathcal{M}^{alg}_{l}}(M, N) \rightarrow \map_{\mathcal{M}_{l-1}^{alg}}(u_{l-1} M, u_{l-1}N) \rightarrow \map_{\dcat(\ComodE)}(u_{0} M, \Sigma^{l+1} u_{0} N [-l])$,
\end{center}

\item the functors $u_{l}: \mathcal{M}^{alg}_{\infty} \rightarrow \mathcal{M}^{alg}_{l}$ induce an equivalence $\mathcal{M}^{alg}_{\infty} \simeq \varprojlim \mathcal{M}^{alg}_{l}$

\item $\dcat(E_{*}E)\rightarrow \mathcal{M}^{alg}_{\infty}$ is an equivalence along which the functor $u_{0}: \mathcal{M}_{\infty} \rightarrow \mathcal{M}_{0}$ can be identified with taking homology $\dcat(E_{*}E) \rightarrow \ComodE$
\end{enumerate}
\end{prop}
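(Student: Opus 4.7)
The plan is to observe that the six properties of the algebraic tower follow from exactly the same formal arguments used to prove \textbf{Proposition \ref{prop:properties_of_the_topological_gh_tower}} for the topological tower, and that those arguments only use the following structural facts that are shared between the two settings: the ambient $\infty$-category $\Mod_{P}(\dcat(\ComodE)_{\geq 0})$ is Grothendieck prestable; its heart is canonically equivalent to $\ComodE$ (since $\pi_{0} P \simeq E_{*}$ and discrete $P$-modules are just $E_{*}$-module objects in $\ComodE$); its monoidal unit $P$ is connective with $\pi_{*} P \simeq E_{*}[\tau]$, $|\tau| = (1, -1)$; and the ambient derived $\infty$-category $\dcat(\ComodE)_{\geq 0}$ is left complete by \textbf{Remark \ref{rem:dcomode_is_left_complete}}. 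These are the precise conditions extracted from $\synspectra$ in \cite{moduli_problems_for_structured_ring_spectra}, so the arguments transport verbatim once these are verified.

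For $(\spadesuit 1)$ and $(\spadesuit 2)$, I would argue directly. A potential $l$-stage $M$ satisfies $\pi_{*} M \simeq \pi_{*} P_{\leq l} \otimes_{\pi_{0} P_{\leq l}} \pi_{0} M$ by a standard spectral sequence argument using that $P_{\leq 0} \otimes_{P_{\leq l}} M$ is discrete, so its homotopy is concentrated in external degrees $0, \ldots, l$ and the mapping spaces between such objects are $l$-truncated. For $(\spadesuit 2)$, $\mathcal{M}^{alg}_{0}$ consists of discrete $P_{\leq 0} \simeq E_{*}$-modules in $\dcat(\ComodE)$, which is exactly $\ComodE$.

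For $(\spadesuit 3)$ and $(\spadesuit 4)$, which are the heart of the proposition, I would invoke the Lurie square-zero deformation theory of connective $\mathbb{E}_{\infty}$-algebras applied to the fiber sequence
\begin{center}
$(\pi_{l} P)[l] \to P_{\leq l} \to P_{\leq l-1}$
\end{center}
of connective commutative algebras in $\dcat(\ComodE)_{\geq 0}$. Since $\pi_{l} P \simeq E_{*}$ lives in internal degree $-l$ (because $|\tau|=(1,-1)$), the class $(\pi_{l} P)[l]$ contributes an internal grading of $-l$ and an external shift of $l+1$ (one extra shift from the square-zero mechanism), which is exactly what produces the bidegree $\Ext^{l+2,l}_{E_{*}E}$ in both the obstruction class and the fiber term. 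The main obstacle I expect is bookkeeping the bigrading carefully: one must match the external homological shift from the square-zero extension with the internal shift coming from $\tau$, and check that the identification of $\map_{\dcat(\ComodE)}$-terms as Ext groups uses no unseen projectivity hypothesis. This step is where the left-completeness of $\dcat(\ComodE)$ enters, to ensure the relevant Postnikov-type spectral sequences behave well.

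Finally, for $(\spadesuit 5)$, Postnikov convergence of $P$ together with left completeness of $\dcat(\ComodE)_{\geq 0}$ yields $\Mod_{P}(\dcat(\ComodE)_{\geq 0}) \simeq \varprojlim \Mod_{P_{\leq l}}(\dcat(\ComodE)_{\geq 0})$, and this restricts to potential stages because the defining condition $P_{\leq 0} \otimes_{P_{\leq l}} (-)$ discrete is preserved under extension of scalars. For $(\spadesuit 6)$, I would use \textbf{Remark \ref{rem:derived_cat_of_differential_comodules_as_modules_in_the_derived_category}} to identify $\dcat(E_{*}E) \simeq \Mod_{P(\monunit)}(\dcat(\ComodE))$, then observe that a connective $P$-module lies in the essential image of the inclusion $\Mod_{P(\monunit)} \hookrightarrow \Mod_{P}$ exactly when $\tau$ acts invertibly on homotopy, equivalently when $\pi_{*} M \simeq \pi_{*} P \otimes_{\pi_{0} P} \pi_{0} M$, which is the defining condition of $\mathcal{M}^{alg}_{\infty}$. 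The identification of $u_{0}$ with the homology functor $\dcat(E_{*}E) \to \ComodE$ follows from unwinding definitions.
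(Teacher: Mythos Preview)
Your proposal is correct and follows the same approach as the paper: both observe that the algebraic tower satisfies $(\spadesuit 1)$--$(\spadesuit 6)$ by the identical formal arguments used in the topological case (the paper citing \cite{moduli_problems_for_structured_ring_spectra}[3.3.2, 3.3.4, 3.3.5]), with $(\spadesuit 5)$ requiring the left completeness of $\dcat(\ComodE)$ from \textbf{Remark \ref{rem:dcomode_is_left_complete}}. Your outline is in fact more detailed than the paper's proof, which is quite terse and defers a complete account to \cite{abstract_gh_theory}; the structural features you isolate (Grothendieck prestability, identification of the heart, the form of $\pi_{*}P$, and left completeness) are exactly the inputs the cited arguments require.
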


\begin{proof}
This is the same as the topological case, which was \cite{moduli_problems_for_structured_ring_spectra}[3.3.2, 3.3.4, 3.3.5], but easier, since one doesn't have to construct $\synspectra$. The property $(\spadesuit 5)$, which requires Postnikov convergence, follows from \textbf{Remark \ref{rem:dcomode_is_left_complete}}. Again, a complete account will follow in \cite{abstract_gh_theory}.
\end{proof}
Note that the properties $(\spadesuit 1) - (\spadesuit 5)$ of \textbf{Proposition \ref{prop:properties_of_the_algebraic_gh_tower}} satisfied by the algebraic Goerss-Hopkins tower are exactly the same as those satisfied by the topological one, the only difference is $(\spadesuit 6)$, since $\mathcal{M}^{top}_{\infty} \simeq \spectra_{E}$, but in the algebraic case $\mathcal{M}^{alg}_{\infty} \simeq \dcat(E_{*}E)$. 

\begin{rem}
Taken together, \textbf{Proposition \ref{prop:properties_of_the_topological_gh_tower}} and \textbf{Proposition \ref{prop:properties_of_the_algebraic_gh_tower}} give an intuitive explanation as to why the $\infty$-categories $\spectra_{E}$ and $\dcat(E_{*}E)$ as formally analogous - they both arise as limits of towers satisfying a string of identical properties. 

Note that the existence of such towers also implies that mapping spaces in either of these can be computed by an Adams spectral sequence whose second page is given by $\Ext$-groups in $E_{*}E$-comodules. This is the original insight of Franke which made him focus on $\dcat(E_{*}E)$ \cite{franke1996uniqueness}.
\end{rem}
The following theorem, which is the main result of this section, says that in the case of finite homological dimension, the layers of the Goerss-Hopkins towers stabilize in a strong sense. 

\begin{thm}
\label{thm:both_elocal_cat_and_periodic_derived_cat_have_equivalent_homotopy_categories_to_moduli_of_high_potential_stages}
Let $p > n+1$. Then, $\mathcal{M}_{\infty}^{top} \rightarrow \mathcal{M}^{top}_{l}$ and $\mathcal{M}_{\infty}^{alg} \rightarrow \mathcal{M}_{l}^{alg}$ induce equivalences between homotopy $k$-categories, where $k= (l +1 - n^{2} - n)$. In particular, $h_{k} \spectra_{E} \simeq h_{k} \mathcal{M}^{top}_{l}$ and $h_{k} \dcat(E_{*}E) \simeq h_{k}\mathcal{M}^{alg}_{l}$. 
\end{thm}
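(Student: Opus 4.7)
Both Goerss--Hopkins towers satisfy the identical formal properties $(\spadesuit 1)$--$(\spadesuit 5)$, differing only in the identification of the inverse limit via $(\spadesuit 6)$, so my plan is to argue uniformly, appealing only to those abstract properties together with \textbf{Theorem \ref{thm:injective_dimension_of_ee_comodules}}, and then to recover the two explicit equivalences at the end via $(\spadesuit 6)$. The overall strategy is to reduce the claim to showing that each transition $u_{m}: \mathcal{M}_{m+1} \to \mathcal{M}_{m}$ is an equivalence of homotopy $k$-categories for every $m \geq l$, and then to pass this equivalence up the inverse system $\mathcal{M}_{\infty} \simeq \varprojlim \mathcal{M}_{m}$ supplied by $(\spadesuit 5)$.

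For full faithfulness of an individual $u_{m}$ on $h_{k}$, I would apply the fibre sequence of $(\spadesuit 4)$ with the role of $l$ played by $m+1$; the third term $B_{m} := \map_{\dcat(\ComodE)}(u_{0}X, \Sigma^{m+2} u_{0}Y[-(m+1)])$ has $i$-th homotopy group an $\Ext$-group of cohomological degree $m+2-i$, which by \textbf{Theorem \ref{thm:injective_dimension_of_ee_comodules}} vanishes for $m+2-i > n^{2}+n$. Hence $B_{m}$ is $(m+1-n^{2}-n)$-connected, and therefore $k$-connected throughout the regime $m \geq l$, $k = l+1-n^{2}-n$; a direct long-exact-sequence argument then upgrades this to an equivalence on $(k-1)$-truncated mapping spaces. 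For essential surjectivity I would lift iteratively: starting from $X \in \mathcal{M}_{l}$, I successively construct $X_{m+1} \in \mathcal{M}_{m+1}$ with $u_{m} X_{m+1} \simeq X_{m}$, the obstruction at each stage lying in $\Ext_{E_{*}E}^{m+3, m+1}(u_{0}X, u_{0}X)$ by $(\spadesuit 3)$ and vanishing by \textbf{Theorem \ref{thm:injective_dimension_of_ee_comodules}} once $m+3 > n^{2}+n$, which is automatic in the meaningful range $k \geq 1$, hence $l \geq n^{2}+n$.

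Passing to the limit is then largely formal for mapping spaces: since $(k-1)$-truncation commutes with limits of spaces, the chain of level-wise equivalences $\tau_{\leq k-1}\map_{\mathcal{M}_{m+1}} \simeq \tau_{\leq k-1}\map_{\mathcal{M}_{m}}$ descends to an equivalence $\tau_{\leq k-1}\map_{\mathcal{M}_{\infty}} \simeq \tau_{\leq k-1}\map_{\mathcal{M}_{l}}$. The main obstacle I anticipate is the other half: promoting the iteratively chosen lifts $(X_{m})$, together with their chosen compatibility equivalences $u_{m}X_{m+1} \simeq X_{m}$, into a genuinely coherent object of the $\infty$-categorical limit $\varprojlim \mathcal{M}_{m}$ rather than just a sequence of isomorphism classes. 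Fortunately, the connectivity estimate for $B_{m}$ strictly exceeds $k$ for $m > l$, which provides the slack required for the standard $\infty$-categorical procedures for assembling compatible systems into limit objects to succeed. Once this is in place, the identifications $(\spadesuit 6)$ yield $h_{k}\spectra_{E} \simeq h_{k}\mathcal{M}_{l}^{top}$ and $h_{k}\dcat(E_{*}E) \simeq h_{k}\mathcal{M}_{l}^{alg}$.
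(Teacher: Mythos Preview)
Your proposal is correct and follows essentially the same approach as the paper: argue uniformly for both towers, reduce via $(\spadesuit 5)$ to showing each transition $\mathcal{M}_{m+1} \to \mathcal{M}_{m}$ for $m \geq l$ is essentially surjective via the vanishing obstruction of $(\spadesuit 3)$ and $k$-connected on mapping spaces via the fibre sequence of $(\spadesuit 4)$, then invoke $(\spadesuit 6)$. The only notable difference is that you are more explicit than the paper about the passage to the limit---the paper simply asserts that $(\spadesuit 5)$ reduces the problem to the individual transition maps, while you correctly flag the coherence issue for essential surjectivity and observe that the strictly increasing connectivity of the $B_{m}$ for $m > l$ is what makes this work.
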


\begin{proof}

Before proceeding with the proof, recall that by \textbf{Theorem \ref{thm:injective_dimension_of_ee_comodules}}, if $p > n+1$, then the abelian category $\ComodE$ is of finite homological dimension equal to $n^{2}+n$; in other words, all $\Ext$-groups between $E_{*}E$-comodules vanish above that line. 

The proof will only use the properties $(\spadesuit 3) - (\spadesuit 5)$ of \textbf{Proposition \ref{prop:properties_of_the_topological_gh_tower}} and \textbf{Proposition \ref{prop:properties_of_the_algebraic_gh_tower}}, with the second part following immediately from $(\spadesuit 6)$. Thus, let us write $\mathcal{M}_{l}$ for either the topological or algebraic $\infty$-category of potential $l$-stages.

Observe that by property $(\spadesuit 5)$, it is enough to show that $\mathcal{M}_{l^\prime+1} \rightarrow \mathcal{M}_{l^\prime}$ is an equivalence on homotopy $k$-categories for all $l^\prime \geq l$. This only makes sense when $k \geq 1$, so that we can assume that $l \geq n^{2}+n$. 

By property $(\spadesuit 3)$, we have an obstruction to lifting an object $X \in \mathcal{M}_{l^\prime}$ to $\mathcal{M}_{l^\prime+1}$ lying in the group $\Ext^{l^\prime+3, l^\prime+1}_{E_{*}E}(u_{0}X, u_{0}X)$. Since $l^\prime+3 \geq l+3 > n^{2}+n$, the obstruction necessarily vanishes and we deduce that $\mathcal{M}_{l^\prime+1} \rightarrow \mathcal{M}_{l^\prime}$ is essentially surjective. 

It is now enough to show that $\map_{\mathcal{M}_{l^\prime+1}}(X, Y) \rightarrow \map_{\mathcal{M}_{l^\prime}}(u_{l^\prime}X, u_{l^\prime}Y)$ is a $k$-connected map of spaces for any $X, Y \in \mathcal{M}_{l^\prime+1}$, since then it is an equivalence after $(k-1)$-truncation. By property $(\spadesuit 4)$, we have a fibre sequence 

\begin{center}
$\map_{\mathcal{M}_{l^\prime+1}}(X, Y) \rightarrow \map_{\mathcal{M}_{l^\prime}}(u_{l^\prime}X, u_{l^\prime}Y) \rightarrow  \map_{\dcat(\ComodE)}(u_{0} X, \Sigma^{l^\prime+2} u_{0} Y [-l^\prime-1])$.
\end{center}
Since $\pi_{s} \map_{\dcat(\ComodE)}(u_{0} X, \Sigma^{l^\prime+2} u_{0} Y [-l^\prime-1]) \simeq \Ext_{E_{*}E}^{l^\prime+2-s, -l^\prime-1}(u_{0} X, u_{0}Y)$, the homological dimension of $\ComodE$ implies that the base of the above fibre sequence is $(l^\prime+1-n^{2}-n)$-connected. However, $l^\prime+1- n^{2}-n \geq l+1 -n^{2}-n = k$, which gives the needed result. 
\end{proof}

\section{The Bousfield adjunction}
\label{section:the_bousfield_adjunction}

In this section we show that the Bousfield splitting induces adjunctions between the $\infty$-categories $\dcat(\ComodE)$ and $\Mod_{\monunitt{l}}(\synspectra)$ for small values of $l$. We then prove that these adjunctions are monadic, a first step in establishing an algebraic description of $\Mod_{\monunitt{l}}(\synspectra)$. 

The idea is as follows. Assume that $2p - 2 > n^2+n$, and let $l \leq 2p-3-n^{2}-n$. Under these assumptions, we have constructed the Bousfield splitting functor $\beta: \ComodE \rightarrow h_{l+1}\spectra_{E}$ which gives a partial inverse to the homology functor, see \textbf{Definition \ref{defin:bousfield_splitting}}. 

Consider the composite $(\monunitt{l} \otimes -) \circ \nu: \spectra_{E} \rightarrow \Mod_{\monunitt{l}}(\synspectra)$ of the synthetic analogue construction and the free module functor. The essential image of this composite is contained in the subcategory $\mathcal{M}_{l}$ of potential $l$-stages, which is an $(l+1)$-category. We deduce that there's an induced functor from $h_{l+1}(\spectra_{E})$, which by abuse of notation we will also denote $(\monunitt{l} \otimes -) \circ \nu$. This induced functor can then be composed with $\beta$; we will use this composite to construct the needed adjunction. 

We would like our adjunction to be symmetric monoidal, so we have to be a little bit more careful. Recall that we have shown that when restricted to the subcategory of comodules which are projective over $E_{*}$, the Bousfield splitting is symmetric monoidal. Because of this, it will be convenient for us to restrict to comodules which are projective.

\begin{notation}
We denote the category of $E_{*}E$-comodules which are finitely generated and projective over $E_{*}$ by $\ComodE^{fp}$. 
\end{notation}
The category $\ComodE^{fp}$ should be regarded as a convenient choice of generators for the connective derived $\infty$-category of comodules. More precisely, the derived $\infty$-category is freely generated under colimits, as a symmetric monoidal $\infty$-category, by the image of the inclusion $\ComodE^{fp} \hookrightarrow \dcat(\ComodE)_{\geq 0}$, subject to the relations that the latter preserves direct sums and takes hypercovers to colimit diagrams, see \cite{pstrkagowski2018synthetic}[2.62, A.23]. 

\begin{lemma}
The functor $(\monunitt{0} \otimes -) \circ \nu \circ \beta: \ComodE^{fp} \rightarrow \Mod_{\monunitt{0}}(\synspectra)$ is symmetric monoidal. 
\end{lemma}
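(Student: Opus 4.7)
The plan is to exhibit the composite as a composition of symmetric monoidal functors, factoring through the classical homotopy category $h\spectra_E$ in order to make sense of postcomposing $\beta$ with $\infty$-categorical functors.

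Both $\nu \colon \spectra_E \to \synspectra$ and $\monunitt{0} \otimes - \colon \synspectra \to \Mod_{\monunitt{0}}(\synspectra)$ are symmetric monoidal by construction, so the composite $\Psi := (\monunitt{0} \otimes -) \circ \nu$ is symmetric monoidal. From the characterization of the essential image of $\nu$ recalled in the excerpt, for every $X \in \spectra_{E}$ the synthetic spectrum $\monunitt{0}\otimes\nu X$ is discrete with $\pi_{0} \simeq E_{*}X$, so $\Psi$ lands in the heart $\Mod_{\monunitt{0}}(\synspectra)^{\heartsuit} \simeq \ComodE$. Since this heart is a $1$-category, the universal property of the homotopy $1$-category produces a unique symmetric monoidal factorization
\[
\Psi \colon \spectra_E \longrightarrow h\spectra_E \xrightarrow{\overline{\Psi}} \Mod_{\monunitt{0}}(\synspectra).
\]

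By Theorem \ref{thm:bousfield_splitting_is_symmetric_monoidal}, the Bousfield splitting $\beta \colon \ComodE^{proj} \to h_k\spectra_E$ is symmetric monoidal, where $k = 2p - 2 - n^{2} - n \geq 1$. Restricting to the subcategory $\ComodE^{fp} \subset \ComodE^{proj}$ and composing with the symmetric monoidal further truncation $h_k\spectra_E \to h_1\spectra_E = h\spectra_E$ and with $\overline{\Psi}$, I obtain a symmetric monoidal functor
\[
\ComodE^{fp} \xrightarrow{\beta} h_k\spectra_E \longrightarrow h\spectra_E \xrightarrow{\overline{\Psi}} \Mod_{\monunitt{0}}(\synspectra),
\]
which by construction coincides with $(\monunitt{0}\otimes-)\circ\nu\circ\beta$ and is symmetric monoidal as a composite of symmetric monoidal functors.

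The only subtle point is that the composition $(\monunitt{0}\otimes-)\circ\nu\circ\beta$ does not literally make sense as an $\infty$-categorical composition, since $\beta$ takes values in a homotopy $k$-category whereas $\nu$ is an $\infty$-functor. This is resolved by the observation that $\Psi$ descends uniquely through the projection $\spectra_E \to h\spectra_E$ because its image is discrete, and the symmetric monoidal enhancement of this descent follows formally from the universal property of the homotopy $1$-category as the localization of $\spectra_E$ in the $\infty$-category of symmetric monoidal $\infty$-categories.
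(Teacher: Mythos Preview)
Your approach is essentially the same as the paper's: decompose the functor as a composite of symmetric monoidal pieces, using that the target of $(\monunitt{0}\otimes -)\circ\nu$ is a $1$-category to make sense of composing with $\beta$. The paper sets up this factorization in the paragraph preceding the lemma and then simply records that each factor is symmetric monoidal.

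There is one inaccuracy worth flagging. You assert that $\nu\colon \spectra_E \to \synspectra$ is symmetric monoidal ``by construction'' on all of $\spectra_E$, but in fact $\nu$ is only \emph{lax} symmetric monoidal in general; it is strong symmetric monoidal only on the full subcategory $\spectra_E^{proj}$ of spectra with projective $E_*$-homology (this is precisely what the paper invokes, citing \cite{pstrkagowski2018synthetic}[4.24]). Consequently your $\overline{\Psi}$, as a functor out of all of $h\spectra_E$, would a priori only be lax symmetric monoidal. The fix is painless: since $E_*\beta(M)\simeq M$ is projective for $M\in\ComodE^{fp}$, the Bousfield splitting lands in $h_k\spectra_E^{proj}$, and you can run your factorization argument with $\spectra_E^{proj}$ in place of $\spectra_E$ throughout. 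With that correction your proof is correct and matches the paper's.
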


\begin{proof}
The restriction of $\beta$ to projective comodules is symmetric monoidal by \textbf{Theorem \ref{thm:bousfield_splitting_is_symmetric_monoidal}}, and $\nu$ is symmetric monoidal when restricted to spectra with projective homology by \cite{pstrkagowski2018synthetic}[4.24]. We deduce that $(\monunitt{0} \otimes -) \circ \nu \circ \beta$ is a composite of symmetric monoidal functors, which ends the argument. 
\end{proof}

\begin{lemma}
\label{lemma:bousfield_functor_extended_to_zero_modules_equivalent_to_the_inclusion}
The functor $(\monunitt{0} \otimes -) \circ \nu \circ \beta: \ComodE^{fp} \rightarrow \Mod_{\monunitt{0}}(\synspectra)$ uniquely extends to a symmetric monoidal equivalence $\dcat(\ComodE)_{\geq 0} \simeq \Mod_{\monunitt{0}}(\synspectra)$. 
\end{lemma}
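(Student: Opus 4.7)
The plan is to use the universal property of $\dcat(\ComodE)_{\geq 0}$ to produce the extension, and then identify it with the known equivalence $\Phi: \Mod_{\monunitt{0}}(\synspectra) \simeq \dcat(\ComodE)_{\geq 0}$ of \cite{pstrkagowski2018synthetic}[4.54].

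By the generation statement \cite{pstrkagowski2018synthetic}[2.62, A.23] recalled above, $\dcat(\ComodE)_{\geq 0}$ is the free Grothendieck prestable presentably symmetric monoidal $\infty$-category receiving $\ComodE^{fp}$, subject to the relations that the extension preserve finite direct sums and send hypercovers to colimit diagrams. The first step is to verify these conditions for $F_{0} := (\monunitt{0} \otimes -) \circ \nu \circ \beta$. Preservation of finite direct sums follows from the fact that $\beta$ preserves them (the pure decomposition is compatible with sums and each $R^{\varphi}$ is an equivalence of categories), that $\nu$ preserves finite sums between spectra with projective homology, and that $\monunitt{0} \otimes -$ is a left adjoint. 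For the hypercover condition, I would use that a hypercover in $\ComodE^{fp}$ maps to a hypercover in the site underlying $\synspectra$ once we apply $\beta$ followed by $\nu$ (since $E_{*}\beta(M) \simeq M$ and $\nu$ realizes the relevant $\infty$-topos structure), and then $\monunitt{0} \otimes -$ preserves colimits. This produces a unique symmetric monoidal colimit-preserving functor $F: \dcat(\ComodE)_{\geq 0} \rightarrow \Mod_{\monunitt{0}}(\synspectra)$ extending $F_{0}$.

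To verify $F$ is an equivalence, I would compose with the known equivalence $\Phi$ and check that the resulting endofunctor $\Phi \circ F$ of $\dcat(\ComodE)_{\geq 0}$ is naturally equivalent to the identity. By the same universal property, such a natural equivalence is determined by one on the generating subcategory $\ComodE^{fp}$. For a finitely generated projective comodule $M$, the spectrum $\beta(M)$ is split with $E_{*}\beta(M) \simeq M$; hence $\nu(\beta(M))$ is a synthetic spectrum whose zeroth comodule-valued homotopy is $M$, and $\monunitt{0} \otimes \nu(\beta(M))$ is discrete, corresponding under $\Phi$ to $M$ viewed in the heart of $\dcat(\ComodE)_{\geq 0}$. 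Naturality of this identification in $M$ is the content of the weak functoriality packaged by $\beta$ together with the functoriality of $\nu$ on spectra with projective homology. Consequently $\Phi \circ F \simeq \mathrm{id}$ as symmetric monoidal colimit-preserving endofunctors of $\dcat(\ComodE)_{\geq 0}$, and so $F$ is an equivalence inverse to $\Phi$.

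The step I expect to be most delicate is the verification that $F_{0}$ sends hypercovers to colimit diagrams; this requires tracking that resolutions of comodules by finitely generated projective ones remain resolutions after composing with $\beta$, $\nu$, and the truncation $\monunitt{0} \otimes -$, despite the fact that $\beta$ is only defined into a truncated homotopy category. The key input making this work is that the truncation $\monunitt{0} \otimes -$ only sees the homology $E_{*}\beta(M) \simeq M$, so the ambiguity introduced by $\beta$ is killed, reducing the hypercover condition back to an exactness statement in $\ComodE$.
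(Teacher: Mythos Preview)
Your proposal is correct and uses the same ingredients as the paper, but the paper organizes them more efficiently. Rather than first verifying the hypercover and direct-sum conditions for $F_{0}$ directly, then extending, and only afterward identifying $\Phi \circ F$ with the identity on generators, the paper makes the identification at the level of $F_{0}$ itself: using \cite{pstrkagowski2018synthetic}[4.22] one has $\monunitt{0} \otimes \nu X \simeq E_{*}X$ in the heart, and since $E_{*}(\beta M) \simeq M$, the composite $(\monunitt{0}\otimes -)\circ\nu\circ\beta$ is naturally equivalent, as a symmetric monoidal functor, to the inclusion $\ComodE^{fp} \hookrightarrow \dcat(\ComodE)_{\geq 0}$. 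Once that identification is made, the hypercover and direct-sum conditions are automatic and the unique extension is visibly the identity equivalence. In effect, the observation you reserve for the end (and for resolving your ``delicate'' step) is exactly what the paper invokes at the start, collapsing the argument to two lines.
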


\begin{proof}
Note that by \cite{pstrkagowski2018synthetic}[4.54], the $\infty$-category $\Mod_{\monunitt{0}}(\synspectra)$ is equivalent to the connective derived $\infty$-category, we only have to verify that the given functor uniquely extends to one. 

By \cite{pstrkagowski2018synthetic}[4.22], we have $\monunit_{0} \otimes \nu X \simeq E_{*}X$ when considered as an object of $\synspectra^{\heartsuit} \simeq \ComodE$. Since $E_{*} (\beta M) \simeq M$ for any comodule $M$ by the construction of the Bousfield splitting, we deduce that the composite $(\monunitt{0} \otimes -) \circ \nu \circ \beta$ can be identified with the inclusion of $\ComodE^{fp}$ into the heart. The unique extension to an equivalence then exists by \cite{pstrkagowski2018synthetic}[2.62, A.23],  
\end{proof}

\begin{prop}
\label{prop:composite_of_bousfield_splitting_and_free_module_extends_to_a_cocontinuous_functor}
The functor $(\monunitt{l} \otimes -) \circ \nu \circ \beta: \ComodE^{fp} \rightarrow \Mod_{\monunitt{l}}(\synspectra)$ uniquely extends to a symmetric monoidal, cocontinuous functor $\dcat(\ComodE)_{\geq 0} \rightarrow \Mod_{\monunitt{l}}(\synspectra)$. 
\end{prop}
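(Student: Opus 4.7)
The plan is to invoke the universal property of the connective derived $\infty$-category from \cite{pstrkagowski2018synthetic}[2.62, A.23], which characterises $\dcat(\ComodE)_{\geq 0}$ as the free presentably symmetric monoidal $\infty$-category generated under colimits by $\ComodE^{fp}$, subject to the condition that the generating inclusion preserves finite direct sums and sends hypercovers to colimit diagrams. Since $\Mod_{\monunitt{l}}(\synspectra)$ is presentably symmetric monoidal, producing the claimed extension is equivalent to verifying three properties of the composite $F := (\monunitt{l} \otimes -) \circ \nu \circ \beta$ regarded as a functor $\ComodE^{fp} \to \Mod_{\monunitt{l}}(\synspectra)$: symmetric monoidality, preservation of finite direct sums, and preservation of hypercovers.

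The first two properties are handled exactly as in the preceding \textbf{Lemma \ref{lemma:bousfield_functor_extended_to_zero_modules_equivalent_to_the_inclusion}}, with $l$ in place of $0$. Symmetric monoidality holds because the restriction of $\beta$ to $\ComodE^{proj}$ is symmetric monoidal by \textbf{Theorem \ref{thm:bousfield_splitting_is_symmetric_monoidal}}; its image consists of split pure spectra, which have projective $E_{*}$-homology, so $\nu$ is symmetric monoidal on them by \cite{pstrkagowski2018synthetic}[4.24]; and extension of scalars $\monunitt{l} \otimes (-)$ is symmetric monoidal for any commutative algebra $\monunitt{l}$. Preservation of finite direct sums is immediate from the explicit wedge formula of \textbf{Definition \ref{defin:bousfield_splitting}}, combined with the cocontinuity of $\nu$ as a left adjoint and the cocontinuity of base change.

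The main obstacle is the hypercover condition: given a hypercover $P_{\bullet} \to M$ in $\ComodE^{fp}$, one must show that the augmented simplicial object $F(P_{\bullet}) \to F(M)$ is a colimit diagram in $\Mod_{\monunitt{l}}(\synspectra)$. The strategy is to exploit the fact that $F$ factors through the subcategory $\mathcal{M}^{top}_{l}$ of topological potential $l$-stages, which is an $(l+1)$-category by \textbf{Proposition \ref{prop:properties_of_the_topological_gh_tower}}. Because an $(l+1)$-category cannot detect homotopical information above level $l+1$, the fact that $\beta$ is only genuinely functorial into the homotopy $(l+1)$-category of $\spectra_{E}$ is no obstacle to organising the simplicial data: the $(l+1)$-categorical coherences are enough to assemble a coherent simplicial diagram of potential $l$-stages in $\Mod_{\monunitt{l}}(\synspectra)$. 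The hard part is then verifying that this simplicial object is in fact a colimit diagram; I would handle this by combining the established level-zero statement of \textbf{Lemma \ref{lemma:bousfield_functor_extended_to_zero_modules_equivalent_to_the_inclusion}} with an inductive Postnikov argument along the tower $\monunitt{l} \to \monunitt{l-1} \to \ldots \to \monunitt{0}$, using the fibre sequences identifying successive layers with $\pi_{k}(\monunit)[k]$-twists and the flatness of $\pi_{*}\monunit \simeq E_{*}[\tau]$ over $\pi_{0}\monunit \simeq E_{*}$ to propagate colimit-preservation from level $0$ upwards. The uniqueness of the extension is then automatic from the uniqueness clause of the universal property.
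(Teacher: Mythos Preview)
Your approach is correct in spirit but takes a longer route than the paper. Both proofs invoke the universal property \cite{pstrkagowski2018synthetic}[2.62, A.23] and reduce to checking that $F = (\monunitt{l}\otimes -)\circ\nu\circ\beta$ preserves direct sums and takes hypercovers to colimit diagrams; the difference lies entirely in how the hypercover condition is verified.

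The paper observes that the extension of scalars $\monunitt{0}\otimes_{\monunitt{l}}(-)$ is cocontinuous and \emph{conservative} (since $\tau$ is nilpotent on $\monunitt{l}$), hence reflects colimit diagrams. Composing $F$ with this functor gives $(\monunitt{0}\otimes -)\circ\nu\circ\beta$, which is exactly the $l=0$ functor treated in \textbf{Lemma \ref{lemma:bousfield_functor_extended_to_zero_modules_equivalent_to_the_inclusion}}, so the hypercover condition for general $l$ follows in one line. Your Postnikov induction along $\monunitt{l}\to\monunitt{l-1}\to\cdots\to\monunitt{0}$, using the cofibre sequences $\monunitt{i}\to\monunitt{i-1}\to\Sigma^{i+1}\monunitt{0}[-i]$, is effectively an inductive proof of this same conservativity statement: one shows that the cofibre of $|F(P_\bullet)|\to F(M)$ vanishes after tensoring with each $\monunitt{i}$, descending from $i=0$. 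This works, but it unpacks what the paper gets for free. The flatness of $\pi_*\monunit$ over $\pi_0\monunit$ that you mention is not actually needed.

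Your discussion of ``organising the simplicial data'' via $(l+1)$-categorical coherences is a digression: the composite $F$ is already a genuine functor of $\infty$-categories into $\Mod_{\monunitt{l}}(\synspectra)$ (this is precisely what the paragraph preceding the proposition establishes), so applying it to a simplicial object in the $1$-category $\ComodE^{fp}$ produces a coherent simplicial object with no further work. There is no coherence problem to solve.
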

\begin{proof}
By \cite{pstrkagowski2018synthetic}[2.62, A.23], it is enough to check that the given functor preserves direct sums and takes hypercovers to colimit diagrams. Since extension of scalars along $\monunitt{l} \rightarrow \monunitt{0}$ is cocontinuous and conservative, we deduce that it is enough to check that 

\begin{center}
$\monunitt{0} \otimes_{\monunitt{l}} ((\monunitt{l} \otimes -) \circ \nu \circ \beta) \simeq (\monunitt{0} \otimes -) \circ \nu \circ \beta$
\end{center}
has these two properties, which is immediate from \textbf{Lemma \ref{lemma:bousfield_functor_extended_to_zero_modules_equivalent_to_the_inclusion}}.
\end{proof}

Notice that since the extension constructed in \textbf{Proposition \ref{prop:composite_of_bousfield_splitting_and_free_module_extends_to_a_cocontinuous_functor}} is a cocontinuous functor between presentable $\infty$-categories, it necessarily has a right adjoint. 

\begin{defin}
\label{defin:bousfield_adjunction}
We denote the unique cocontinuous extension of the composite $(\monunitt{l} \otimes -) \circ \nu \circ \beta$ by $\beta^{*}: D(\ComodE) \rightarrow \Mod_{\monunitt{l}}(\synspectra)$ and its right adjoint by $\beta_{*}: \Mod_{\monunitt{l}}(\synspectra) \rightarrow \dcat(\ComodE)$.  We call the adjunction $\beta^{*} \dashv \beta_{*}$ the \emph{Bousfield adjunction}. 
\end{defin}
The rest of the section will be devoted to the proof that the Bousfield adjunction is monadic. We will later see that this implies that $\Mod_{\monunitt{l}}(\synspectra)$ can be described as an $\infty$-category of modules in $\dcat(\ComodE)_{\geq 0}$, and that the Bousfield adjunction can then be identified with extension and restriction of scalars. This motivates our choice of notation. 

By Barr-Beck-Lurie, to verify that $\beta^{*} \dashv \beta_{*}$ is monadic, we have to check that $\beta_{*}$ preserves certain kinds of geometric realizations, and that it is conservative. We begin with the former, in fact, we will show that $\beta_{*}$ is cocontinuous. 

\begin{lemma}
\label{lemma:bousfield_adjunction_induces_an_equivalence_on_hearts}
The induced adjunction $(-)_{\leq 0} \circ \beta^{*} \dashv \beta_{*}: \dcat(\ComodE)^{\heartsuit}_{\geq 0} \rightleftarrows \Mod_{\monunitt{l}}(\synspectra)^{\heartsuit}$ between the hearts is an adjoint equivalence. 
\end{lemma}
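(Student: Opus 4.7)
The plan is to identify both hearts with $\ComodE$ and show that, under these identifications, the left adjoint $(-)_{\leq 0} \circ \beta^{*}$ is naturally equivalent to the identity; since the right adjoint of an equivalence is an equivalence, this will give the claim once we verify that $\beta_{*}$ sends heart to heart so that the adjunction of hearts is well-defined.

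First I would identify the hearts. The heart $\dcat(\ComodE)^{\heartsuit}_{\geq 0}$ is $\ComodE$ by definition. For the synthetic side, an object $Y \in \Mod_{\monunitt{l}}(\synspectra)$ is discrete iff its underlying synthetic spectrum is, in which case its $\monunitt{l}$-module structure factors through $\pi_{0}\monunitt{l} \simeq E_{*}$, the monoidal unit of $\ComodE$. Hence $\Mod_{\monunitt{l}}(\synspectra)^{\heartsuit} \simeq \synspectra^{\heartsuit} \simeq \ComodE$.

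Next I would compute the left adjoint on generators. For any connective $X \in \synspectra$, the cofibre sequence $\tau_{>l}\monunit \otimes X \to X \to \monunitt{l} \otimes X$ has first term with vanishing homotopy in degrees $\leq l$, so $\pi_{0}(\monunitt{l} \otimes X) \simeq \pi_{0}X$. Applied to $X = \nu(\beta(M))$ for $M \in \ComodE^{fp}$, this gives $\pi_{0}\beta^{*}(M) \simeq \pi_{0}\nu(\beta M) \simeq E_{*}(\beta M) \simeq M$ by \textbf{Remark \ref{rem:bousfield_splitting_a_splitting}}. Both $(-)_{\leq 0} \circ \beta^{*}|_{\ComodE}$ and the identity preserve filtered colimits, and $\ComodE^{fp}$ generates $\ComodE$ under these, so the two functors are naturally equivalent on all of $\ComodE$.

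Finally I would check that $\beta_{*}$ preserves the heart: for $Y \in \Mod_{\monunitt{l}}(\synspectra)^{\heartsuit}$ and any $Z \in \dcat(\ComodE)_{\geq 0}$, the adjunction gives $\map(Z, \beta_{*}(Y)) \simeq \map(\beta^{*}(Z), Y)$, which is discrete because mapping into a discrete object factors through $\pi_{0}$. Hence $\beta_{*}(Y)$ is $0$-truncated, i.e., lies in $\ComodE$. The previous step then identifies the left adjoint on hearts with the identity, so $\beta_{*}|_{\heartsuit}$ is an equivalence as well. The main bookkeeping subtlety is tracking the identifications of the two hearts with $\ComodE$; the essential input is that $E_{*} \circ \beta \simeq \mathrm{id}$ on comodules together with the fact that tensoring with $\monunitt{l}$ does not affect $\pi_{0}$.
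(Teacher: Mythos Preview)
Your overall strategy---identify both hearts with $\ComodE$ and check that $(-)_{\leq 0}\circ\beta^{*}$ is the identity on it---is sound, and your verification that $\beta_{*}$ preserves the heart is fine. However, the extension step in your second paragraph has a gap: the claim that $\ComodE^{fp}$ generates $\ComodE$ under filtered colimits is false. A filtered colimit of $E_{*}$-projective comodules has flat underlying $E_{*}$-module by Lazard's theorem, but comodules such as $E_{*}/I_{k}$ are not $E_{*}$-flat, so they cannot arise this way. Thus knowing $(-)_{\leq 0}\circ\beta^{*}\simeq\mathrm{id}$ on $\ComodE^{fp}$ and preservation of filtered colimits does not suffice to conclude it on all of $\ComodE$.

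The fix is to work with all colimits rather than filtered ones. Both $(-)_{\leq 0}\circ\beta^{*}$ and $\pi_{0}$ are cocontinuous functors $\dcat(\ComodE)_{\geq 0}\to\ComodE$ (the truncation $(-)_{\leq 0}$ is a left adjoint), and they agree on $\ComodE^{fp}$ by your computation; the universal property used in \textbf{Proposition~\ref{prop:composite_of_bousfield_splitting_and_free_module_extends_to_a_cocontinuous_functor}} then forces them to agree everywhere, in particular on $\ComodE$. This is essentially what the paper does, but packaged differently: the paper observes that the composite of $\beta^{*}\dashv\beta_{*}$ with extension/restriction along $\monunitt{l}\to\monunitt{0}$ is, by \textbf{Lemma~\ref{lemma:bousfield_functor_extended_to_zero_modules_equivalent_to_the_inclusion}}, already known to be an adjoint equivalence $\dcat(\ComodE)_{\geq 0}\simeq\Mod_{\monunitt{0}}(\synspectra)$. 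Since the second adjunction is trivially an equivalence on hearts, a two-out-of-three argument gives the result for $\beta^{*}\dashv\beta_{*}$. This route avoids recomputing $\pi_{0}\beta^{*}$ from scratch and sidesteps the extension issue entirely.
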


\begin{proof}
Since $\monunitt{0} \otimes_{\monunitt{l}} ((\monunitt{l} \otimes -) \circ \nu \circ \beta \simeq (\monunitt{0} \otimes -) \circ \nu \circ \beta$, we see that \textbf{Lemma \ref{lemma:bousfield_functor_extended_to_zero_modules_equivalent_to_the_inclusion}} implies that the composite adjunction 

\begin{center}
$D(\ComodE)_{\geq 0} \rightleftarrows \Mod_{\monunitt{l}}(\synspectra) \rightleftarrows \Mod_{\monunitt{0}}(\synspectra)$
\end{center}
is an adjoint equivalence, in particular an adjoint equivalence between the hearts. Since the latter is also true for the right adjunction, we deduce that it also holds for the left one, which is what we wanted to show. 
\end{proof}

\begin{lemma}
\label{lemma:right_adjoint_in_bousfield_adjunction_preserves_connected_objects}
If $X \in \Mod_{\monunitt{l}}(\synspectra)$ is $k$-connected, then so is $\beta_{*}X \in \dcat(\ComodE)_{\geq 0}$.
\end{lemma}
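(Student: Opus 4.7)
The plan is to use the adjunction $\beta^{*} \dashv \beta_{*}$ together with the identification of units $\beta^{*}(E_{*}) \simeq \monunitt{l}$ to transfer vanishing of homotopy groups from $X$ to $\beta_{*}X$. The first step is to compute $\beta^{*}$ on the unit comodule $E_{*} \in \ComodE$: since the Bousfield splitting of $E_{*}$ is the pure-of-phase-$0$ spectrum $S^{0}_{E}$ (whose homology is $E_{*}$), its synthetic analogue $\nu(S^{0}_{E})$ is the unit $\monunit$ of $\synspectra$, and so by the definition of $\beta^{*}$ we have $\beta^{*}(E_{*}) \simeq \monunitt{l} \otimes \monunit \simeq \monunitt{l}$.

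Next, since $\beta^{*}$ is a cocontinuous, symmetric monoidal functor, it commutes with external suspensions and with internal shifts in the bigraded sense. Consequently it sends the collection of shifted units $E_{*}[i]$ (together with their internal twists) in $\dcat(\ComodE)_{\geq 0}$ to the corresponding shifted units $\monunitt{l}[i]$ in $\Mod_{\monunitt{l}}(\synspectra)$. The adjunction then supplies, for each $i \geq 0$ and each internal degree, a natural equivalence of mapping spaces
\[
\Map_{\dcat(\ComodE)}(E_{*}[i],\, \beta_{*}X) \;\simeq\; \Map_{\Mod_{\monunitt{l}}(\synspectra)}(\monunitt{l}[i],\, X),
\]
whose right hand side computes the corresponding graded piece of $\pi_{i}X$ and whose left hand side computes that of $\pi_{i}\beta_{*}X$. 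Assembling across all internal degrees yields an isomorphism of comodules $\pi_{i}\beta_{*}X \simeq \pi_{i}X$ for each $i \geq 0$.

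With this identification in hand, if $X$ is $k$-connected, meaning $\pi_{i}X = 0$ for $i < k$, then immediately $\pi_{i}\beta_{*}X = 0$ for $i < k$, so $\beta_{*}X$ is $k$-connected; the case $k = 0$ also justifies the implicit claim that $\beta_{*}X$ lies in $\dcat(\ComodE)_{\geq 0}$. The main technical obstacle I anticipate is the careful bookkeeping that separates external suspension (which governs the $t$-structure) from internal shift (which governs the bigrading of synthetic spectra), and checking that the mapping-space adjunction recovers the full comodule structure on the homotopy groups rather than just the underlying graded abelian group; this rests on the symmetric monoidal compatibility established in Theorem \ref{thm:bousfield_splitting_is_symmetric_monoidal} together with $E_{*}$-linearity of the unique extension of $\beta^{*}$.
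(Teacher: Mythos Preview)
Your adjunction identity
\[
\Map_{\dcat(\ComodE)_{\geq 0}}(\Sigma^{i}E_{*}[d],\, \beta_{*}X)\ \simeq\ \Map_{\Mod_{\monunitt{l}}(\synspectra)}(\Sigma^{i}\monunitt{l}[d],\, X)
\]
is correct, and on the synthetic side the right--hand mapping space really does compute $\Hom_{\ComodE}(E_{*}[d'],\,\pi_{i}X)$, because $\monunit=\nu(S^{0})$ is a representable sheaf and evaluation at a point detects the sheaf homotopy groups. The gap is on the algebraic side: the comodule $E_{*}$ is \emph{not} projective in $\ComodE$, so $\pi_{0}\Map_{\dcat(\ComodE)_{\geq 0}}(\Sigma^{i}E_{*}[d],\,Y)$ does not reduce to $\Hom_{\ComodE}(E_{*}[d],\,\pi_{i}Y)$. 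Instead there is a Postnikov spectral sequence with contributions $\Ext^{s}_{E_{*}E}(E_{*}[d],\,\pi_{i+s}Y)$ for all $s\geq 0$, and these higher Ext groups are typically nonzero. Consequently the step ``the left hand side computes the graded piece of $\pi_{i}\beta_{*}X$'' fails, and with it the conclusion $\pi_{i}\beta_{*}X\simeq\pi_{i}X$. Put differently: the family $\{E_{*}[d]\}_{d}$ generates $\dcat(\ComodE)_{\geq 0}$ under colimits, so mapping out of it is jointly conservative, but it does \emph{not} detect $t$-structure connectivity, which is what you need.

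The paper circumvents this obstacle by never trying to read off $\pi_{i}\beta_{*}X$ directly. It first observes that the composite $\dcat(\ComodE)_{\geq 0}\xrightarrow{\beta^{*}}\Mod_{\monunitt{l}}(\synspectra)\xrightarrow{\monunitt{0}\otimes_{\monunitt{l}}-}\Mod_{\monunitt{0}}(\synspectra)$ is an equivalence, so $\beta_{*}$ preserves connectivity on anything admitting a $\monunitt{0}$-module structure. Then it runs a finite induction on $i=0,\dots,l$ using the cofibre sequences $\monunitt{i}\to\monunitt{i-1}\to\Sigma^{i+1}\monunitt{0}[-i]$, tensored with $X$, to propagate connectivity of $\beta_{*}(\monunitt{i}\otimes_{\monunitt{l}}X)$ from the $\monunitt{0}$-module case up to $i=l$. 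This is exactly the extra input your argument is missing; once the present lemma is in hand, the identification $\pi_{i}\beta_{*}X\simeq\beta_{*}\pi_{i}X$ you were aiming for is established in the subsequent lemma.
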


\begin{proof}
Since the composite $(\monunitt{0} \otimes_{\monunitt{l}} -) \circ \beta^{*}$ is an equivalence by \textbf{Lemma \ref{lemma:bousfield_functor_extended_to_zero_modules_equivalent_to_the_inclusion}}, so must be its right adjoint, which is given by the restriction of scalars along $\monunitt{l} \rightarrow \monunitt{0}$ followed by $\beta_{*}$. Since restriction of scalars preserves and reflects connectivity, it follows that the result is true if $X$ admits a structure of a $\monunitt{0}$-module.

Let us prove by induction that $\beta_{*} (\monunitt{i} \otimes_{\monunitt{l}} X)$ is $k$-connected for $i$ in the range $l \geq i \geq 0$. Since the base case is covered above, we assume that $i > 0$. We have $\pi_{*} \monunitt{l} \simeq E_{*}[\tau] / (\tau^{l+1})$, and so there exists a cofibre sequence $\monunitt{i} \rightarrow \monunitt{i-1} \rightarrow \Sigma^{i+1} \monunitt{0}[-i] $, which after tensoring with $X$ and applying $\beta_{*}$ yields a fibre sequence

\begin{center}
$\beta_{*} (\monunitt{i} \otimes_{\monunitt{l}} X) \rightarrow \beta_{*}(\monunitt{i-1} \otimes_{\monunitt{l}} X) \rightarrow \beta_{*} (\Sigma^{i+1} \monunitt{0}[-i] \otimes_{\monunitt{l}} X)$.
\end{center}
Then, by the inductive assumption both the right and middle objects are $k$-connected, and from the long exact sequence of homotopy we deduce that the same must be true for the fibre. This ends the argument, since for $i = l$ we have $\monunitt{i} \otimes _{\monunitt{l}} X \simeq X$. 
\end{proof}

\begin{lemma}
\label{lemma:right_adjoint_in_bousfield_adjunction_preserves_homotopy_groups}
The right adjoint $\beta_{*}$ commutes with $t$-structure homotopy groups in the sense that the natural map $\pi_{k} \beta_{*} X \rightarrow \beta_{*} \pi_{k} X$ is an isomorphism for each $k \geq 0$ and $X \in \Mod_{\monunitt{l}}(\synspectra)$.
\end{lemma}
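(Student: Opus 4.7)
The plan is to reduce the general claim to the case $k = 0$ by exploiting that $\beta_{*}$, being a right adjoint, commutes with the loops functor $\Omega$ (a limit, namely the pullback over the zero object), together with the standard identity $\pi_{k} Y \simeq \pi_{0} \Omega^{k} Y$ that is valid in any Grothendieck prestable $\infty$-category for $k \geq 0$ (here $\Omega$ is taken in the prestable sense, so $\Omega Y = \tau_{\geq 0} \Omega^{\mathrm{stable}} Y$, which still satisfies $\pi_{j} \Omega Y \simeq \pi_{j+1} Y$ for $j \geq 0$).

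For the base case $k=0$, I would apply $\beta_{*}$ to the canonical Postnikov fibre sequence $\tau_{\geq 1} X \to X \to \pi_{0} X$. Since $\beta_{*}$ preserves limits, this yields a fibre sequence in $\dcat(\ComodE)_{\geq 0}$. By \textbf{Lemma \ref{lemma:right_adjoint_in_bousfield_adjunction_preserves_connected_objects}}, the fibre $\beta_{*} \tau_{\geq 1} X$ is $1$-connective, so $\pi_{0} \beta_{*} \tau_{\geq 1} X = 0$. By \textbf{Lemma \ref{lemma:bousfield_adjunction_induces_an_equivalence_on_hearts}}, $\beta_{*} \pi_{0} X$ lies in the heart of $\dcat(\ComodE)_{\geq 0}$, whence $\pi_{1} \beta_{*} \pi_{0} X = 0$. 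Combined with the automatic vanishing of negatively indexed homotopy groups in a prestable $\infty$-category, the long exact sequence collapses to the desired isomorphism $\pi_{0} \beta_{*} X \xrightarrow{\sim} \beta_{*} \pi_{0} X$.

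For general $k \geq 0$ and $X \in \Mod_{\monunitt{l}}(\synspectra)$, I then string together the natural isomorphisms
\[
\pi_{k} \beta_{*} X \simeq \pi_{0} \Omega^{k} \beta_{*} X \simeq \pi_{0} \beta_{*} \Omega^{k} X \simeq \beta_{*} \pi_{0} \Omega^{k} X \simeq \beta_{*} \pi_{k} X,
\]
where the outer two apply the identity $\pi_{k} \simeq \pi_{0} \Omega^{k}$ to $\beta_{*} X$ and to $X$ respectively, the second iterates the commutation of $\beta_{*}$ with $\Omega$, and the third is the $k = 0$ case applied to $\Omega^{k} X$.

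The principal subtlety to verify carefully is that $\beta_{*}$ actually sends the heart to the heart, so that the heart-level functor provided by \textbf{Lemma \ref{lemma:bousfield_adjunction_induces_an_equivalence_on_hearts}} agrees with $\beta_{*}$ itself on heart objects; this is what makes $\pi_{1} \beta_{*} \pi_{0} X$ vanish. It follows from left $t$-exactness of $\beta_{*}$, which by adjunction is equivalent to the cocontinuous left adjoint $\beta^{*}$ being right $t$-exact, and the latter is immediate because $\beta^{*}$ preserves $\Sigma$ and each $\ccat_{\geq k}$ is the essential image of $\Sigma^{k}$ in the Grothendieck prestable setting.
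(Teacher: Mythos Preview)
Your overall strategy matches the paper's: reduce to $k=0$ using that $\beta_*$ commutes with $\Omega$, then analyse the truncation map $X \to \pi_0 X$. Your observation that $\beta_*$ preserves the heart is correct and is implicitly used in the paper as well. However, your choice of fibre sequence leaves a gap at the surjectivity step.

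After applying $\beta_*$ to $\tau_{\geq 1} X \to X \to \pi_0 X$ you obtain a fibre sequence $\beta_* \tau_{\geq 1} X \to \beta_* X \to \beta_* \pi_0 X$ in the \emph{prestable} $\infty$-category $\dcat(\ComodE)_{\geq 0}$. For a fibre sequence $F \to E \to B$ in a prestable $\infty$-category the long exact sequence reads $\ldots \to \pi_1 B \to \pi_0 F \to \pi_0 E \to \pi_0 B$, with exactness at $\pi_0 E$ but no claim that the last map is an epimorphism: the prestable fibre is only $\tau_{\geq 0}$ of the stable fibre and carries no information about its $\pi_{-1}$. (For a concrete failure, take $E = 0$ and any $B$ with $\pi_0 B \neq 0$.) Your inputs $\pi_0 \beta_* \tau_{\geq 1} X = 0$ and $\pi_1 \beta_* \pi_0 X = 0$ therefore give injectivity of $\pi_0 \beta_* X \to \beta_* \pi_0 X$, but the appeal to ``vanishing of negatively indexed homotopy groups'' does not supply surjectivity, since $\beta_*$ is only defined at the prestable level and is not known to take this cofibre sequence to a cofibre sequence.

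The paper avoids this by rotating once: it uses the cofibre sequence $X \to X_{\leq 0} \to C$ with $C \simeq \Sigma \tau_{\geq 1} X$, which is $1$-connected. This is also a fibre sequence (since $\pi_0 X \to \pi_0 X_{\leq 0}$ is a monomorphism), so $\beta_*$ carries it to the fibre sequence $\beta_* X \to \beta_* X_{\leq 0} \to \beta_* C$. Now $\pi_0 \beta_* X \to \pi_0 \beta_* X_{\leq 0}$ sits between $\pi_1 \beta_* C$ and $\pi_0 \beta_* C$ in the long exact sequence, and both vanish by \textbf{Lemma~\ref{lemma:right_adjoint_in_bousfield_adjunction_preserves_connected_objects}}, yielding the isomorphism directly. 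Your argument is repaired by this single rotation.
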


\begin{proof}
Let us first describe the natural map, where we first recall that if $X$ is an object of a Grothendieck prestable $\infty$-category, then $\pi_{k} X = (\Omega^{k} X)_{\leq 0}$, considered as an object of the heart. Since $\beta_{*}$ is continuous, it commutes with loops and so it is enough to define the natural map for $k = 0$. In this case, by the natural map we mean the one induced by $\beta_{*} X \rightarrow \beta_{*} X_{\leq 0}$. 

Consider the cofibre sequence $X \rightarrow X_{\leq 0} \rightarrow C$ and observe that the cofibre $C$ is $1$-connected. Applying $\beta_{*}$ we obtain a fibre sequence

\begin{center}
$\beta_{*}X \rightarrow \beta_{*}X_{\leq 0} \rightarrow \beta_{*}C$,
\end{center}
where $\beta_{*}C$ is also $1$-connected by \textbf{Lemma \ref{lemma:right_adjoint_in_bousfield_adjunction_preserves_connected_objects}}. It follows from the long exact sequence of homotopy that $(\beta_{*} X)_{\leq 0} \simeq \beta_{*} X_{\leq 0}$, which is what we wanted to show. 
\end{proof} 

\begin{lemma}
\label{lemma:bousfield_right_adjoint_is_cocontinuous}
The right adjoint $\beta_{*}: \Mod_{\monunitt{l}}(\synspectra) \rightarrow \dcat(\ComodE)_{\geq 0}$ is cocontinuous. 
\end{lemma}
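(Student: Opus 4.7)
The plan is to combine \textbf{Lemma \ref{lemma:right_adjoint_in_bousfield_adjunction_preserves_homotopy_groups}} and \textbf{Lemma \ref{lemma:bousfield_adjunction_induces_an_equivalence_on_hearts}} with the left completeness of $\dcat(\ComodE)_{\geq 0}$ recorded in \textbf{Remark \ref{rem:dcomode_is_left_complete}}, which guarantees that a map in $\dcat(\ComodE)_{\geq 0}$ is an equivalence as soon as it induces an isomorphism on every $\pi_{k}$. Since any colimit in a Grothendieck prestable $\infty$-category is built from filtered colimits and pushouts, and pushouts in turn from cofibers and finite direct sums, it suffices to check that $\beta_{*}$ preserves these three classes of colimits.

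For filtered colimits and finite direct sums, the argument is essentially formal: both operations commute with the $t$-structure homotopy groups, and $\beta_{*}$ restricted to the heart is an equivalence of Grothendieck abelian categories by \textbf{Lemma \ref{lemma:bousfield_adjunction_induces_an_equivalence_on_hearts}}, hence preserves filtered colimits and direct sums there. Combined with $\beta_{*}$ commuting with $\pi_{k}$, the natural comparison map $\mathrm{colim}\, \beta_{*} X_{i} \to \beta_{*}\, \mathrm{colim}\, X_{i}$ is a $\pi_{k}$-isomorphism, and left completeness then upgrades it to an equivalence.

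The main obstacle is preservation of cofibers, since in the prestable setting cofibers are genuinely colimits rather than limits, and $\beta_{*}$ a priori only preserves the latter. Given $f: X \to Y$ in $\Mod_{\monunitt{l}}(\synspectra)$ with cofiber $C$, I would consider the canonical comparison map $\phi: \mathrm{cof}(\beta_{*} f) \to \beta_{*} C$, induced by the universal property of the cofiber applied to the nullhomotopic composite $\beta_{*} X \to \beta_{*} Y \to \beta_{*} C$. To show $\phi$ is an equivalence, I would compare two homotopy long exact sequences in the heart: the one associated to the cofiber sequence $\beta_{*} X \to \beta_{*} Y \to \mathrm{cof}(\beta_{*} f)$ in $\dcat(\ComodE)_{\geq 0}$, and the image under $\beta_{*}$ of the homotopy long exact sequence of $X \to Y \to C$, which remains exact because $\beta_{*}$ on hearts is an exact equivalence. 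By \textbf{Lemma \ref{lemma:right_adjoint_in_bousfield_adjunction_preserves_homotopy_groups}}, every term in the comparison diagram other than those involving $\pi_{k}\mathrm{cof}(\beta_{*}f)$ and $\beta_{*} \pi_{k} C$ is already identified, so a five-lemma argument forces $\pi_{k}\phi$ to be an isomorphism for all $k \geq 0$. Left completeness of $\dcat(\ComodE)_{\geq 0}$ then promotes $\phi$ to an equivalence, completing the proof.
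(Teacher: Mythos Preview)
Your argument is correct and uses the same core ingredients as the paper---namely \textbf{Lemma \ref{lemma:bousfield_adjunction_induces_an_equivalence_on_hearts}} and \textbf{Lemma \ref{lemma:right_adjoint_in_bousfield_adjunction_preserves_homotopy_groups}} together with separatedness---but your treatment of cofibres takes a longer route. You build the comparison map $\phi\colon \mathrm{cof}(\beta_{*}f)\to\beta_{*}C$ and then run a five-lemma on the two long exact sequences; this works, though one should note that the ladder commutes because $\beta_{*}X\to\beta_{*}Y\to\mathrm{cof}(\beta_{*}f)$ is itself a fibre sequence (cofibre sequences in a Grothendieck prestable $\infty$-category are fibre sequences) and $\phi$ is a map of fibre sequences over the identity on $\beta_{*}X$ and $\beta_{*}Y$, so naturality of the connecting homomorphism gives the needed commutativity. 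At the bottom of the ladder you also implicitly use that $\pi_{0}\beta_{*}Y\to\pi_{0}\beta_{*}C$ is an epimorphism, which holds because $\pi_{0}Y\to\pi_{0}C$ is and $\beta_{*}$ is an equivalence on hearts.

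The paper's proof short-circuits this entirely by invoking the characterisation that in a Grothendieck prestable $\infty$-category a fibre sequence $X\to Y\to Z$ is a cofibre sequence if and only if $\pi_{0}Y\to\pi_{0}Z$ is an epimorphism. Since $\beta_{*}$ is continuous, it sends the given cofibre sequence to a fibre sequence, and the epimorphism condition on $\pi_{0}$ is immediate from the two lemmas. This avoids constructing $\phi$ and the five-lemma altogether. Your decomposition into filtered colimits, finite sums, and cofibres is also slightly different from the paper's (which uses arbitrary direct sums and cofibres), but both are valid.
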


\begin{proof}
It's enough to show that $\beta_{*}$ commutes with cofibre sequences and direct sums. Notice that both $\infty$-categories are separated Grothendieck prestable, so that $\pi_{k}(\bigoplus X_{\alpha}) \simeq \bigoplus \pi_{k} X_{\alpha}$ and direct sums are detected by their homotopy groups. Then, the statement for direct sums follows from the fact that $\beta_{*}: \Mod_{\monunitt{l}}(\synspectra)^{\heartsuit} \rightarrow \dcat(\ComodE)^{\heartsuit}_{\geq 0}$ is an equivalence by \textbf{Lemma \ref{lemma:bousfield_adjunction_induces_an_equivalence_on_hearts}}, in particular commutes with direct sums, and \textbf{Lemma \ref{lemma:right_adjoint_in_bousfield_adjunction_preserves_homotopy_groups}}. 

To see that $\beta_{*}$ preserves cofibres, notice that in a Grothendieck prestable $\infty$-category, a fibre sequence $X \rightarrow Y \rightarrow Z$ is cofibre if and only if $\pi_{0} Y \rightarrow \pi_{0} Z$ is an epimorphism. Given such a cofibre sequence in $\Mod_{\monunitt{l}}(\synspectra)$, it is clear that $\beta_{*} X \rightarrow \beta_{*} Y \rightarrow \beta_{*}Z$ is fibre, since $\beta_{*}$ is continuous, and $\pi_{0} \beta_{*} Y \rightarrow \pi_{0} \beta_{*} Z$ is an epimorphism by \textbf{Lemma \ref{lemma:bousfield_adjunction_induces_an_equivalence_on_hearts}} and \textbf{Lemma \ref{lemma:right_adjoint_in_bousfield_adjunction_preserves_homotopy_groups}}. This ends the argument. 
\end{proof}

\begin{thm}
\label{thm:bousfield_adjunction_is_monadic}
The adjunction $\beta^{*} \dashv \beta_{*}: \dcat(\ComodE)_{\geq 0} \rightleftarrows Mod_{\monunitt{l}}(\synspectra)$ is monadic. 
\end{thm}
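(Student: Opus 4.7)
The plan is to apply the Barr-Beck-Lurie monadicity theorem, which reduces to checking two conditions: that $\beta_{*}$ preserves $\beta_{*}$-split geometric realizations, and that $\beta_{*}$ is conservative. The first condition is essentially free from the preceding results. Indeed, \textbf{Lemma \ref{lemma:bousfield_right_adjoint_is_cocontinuous}} establishes that $\beta_{*}$ is cocontinuous, so it preserves \emph{all} colimits, and in particular all geometric realizations, which is much stronger than needed.

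The substantive content is therefore the conservativity of $\beta_{*}$. The strategy is to reduce this to a statement about the hearts using the homotopy group comparison. Suppose $f: X \rightarrow Y$ is a morphism in $\Mod_{\monunitt{l}}(\synspectra)$ whose image $\beta_{*} f$ is an equivalence. Working with the cofibre, we may assume $\beta_{*} X = 0$ and must conclude $X = 0$. By \textbf{Lemma \ref{lemma:right_adjoint_in_bousfield_adjunction_preserves_homotopy_groups}}, the natural map $\pi_{k} \beta_{*} X \simeq \beta_{*} \pi_{k} X$ shows that $\beta_{*} \pi_{k} X = 0$ for all $k \geq 0$ as an object of the heart $\dcat(\ComodE)^{\heartsuit}_{\geq 0}$. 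By \textbf{Lemma \ref{lemma:bousfield_adjunction_induces_an_equivalence_on_hearts}}, the restriction of $\beta_{*}$ to the hearts is an equivalence; in particular it is conservative, so $\pi_{k} X = 0$ in $\Mod_{\monunitt{l}}(\synspectra)^{\heartsuit}$ for every $k \geq 0$.

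To conclude that $X$ itself is zero, we invoke the fact that homotopy groups detect equivalences in $\Mod_{\monunitt{l}}(\synspectra)$. This is a consequence of hypercompleteness of synthetic spectra (and hence of any module $\infty$-category over a connective algebra therein): the Grothendieck prestable $\infty$-category $\Mod_{\monunitt{l}}(\synspectra)$ is separated, so a connective object with vanishing homotopy groups must be zero. Combined with the previous paragraph, this yields conservativity and completes the verification of the Barr-Beck-Lurie hypotheses.

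I do not anticipate any genuine obstacle here; all the hard work is encoded in the preparatory lemmas, especially \textbf{Lemma \ref{lemma:bousfield_adjunction_induces_an_equivalence_on_hearts}} (equivalence on hearts) and \textbf{Lemma \ref{lemma:right_adjoint_in_bousfield_adjunction_preserves_homotopy_groups}} (commutation with $\pi_{*}$). The only mild point requiring care is the appeal to separatedness, where one should be explicit that $\Mod_{\monunitt{l}}(\synspectra)$ inherits this property from the ambient $\infty$-category of hypercomplete synthetic spectra, so that the vanishing of all $\pi_{k}$ for $k \geq 0$ really does force $X \simeq 0$ rather than merely that $X$ is infinitely connected.
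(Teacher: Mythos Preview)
Your proof is correct and follows essentially the same route as the paper: apply Barr--Beck--Lurie, use \textbf{Lemma \ref{lemma:bousfield_right_adjoint_is_cocontinuous}} for the colimit-preservation hypothesis, and deduce conservativity from \textbf{Lemma \ref{lemma:right_adjoint_in_bousfield_adjunction_preserves_homotopy_groups}} and \textbf{Lemma \ref{lemma:bousfield_adjunction_induces_an_equivalence_on_hearts}} together with the fact that homotopy groups detect equivalences. The paper's version is terser and does not pass through the cofibre, but the content is identical.
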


\begin{proof}
By Barr-Beck-Lurie \cite{higher_algebra}[4.7.4.5], it is enough to verify that the right adjoint $\beta_{*}$ is conservative and preserves certain geometric realizations. The latter is clear, since it is even cocontinuous by \textbf{Lemma \ref{lemma:bousfield_right_adjoint_is_cocontinuous}}. 

To see that $\beta_{*}$ is conservative, notice that it commutes with taking homotopy groups by \textbf{Lemma \ref{lemma:right_adjoint_in_bousfield_adjunction_preserves_homotopy_groups}} and \textbf{Lemma \ref{lemma:bousfield_adjunction_induces_an_equivalence_on_hearts}}, and that homotopy groups detect equivalences in both $\infty$-categories. 
\end{proof}

\section{An algebraic model for the $E$-local homotopy category}

In this section we finish the proof of the main result of this note, showing that at large primes we have an equivalence of homotopy categories $h \spectra_{E} \simeq h\dcat(E_{*}E)$ between the $\infty$-category of $E$-local spectra and the derived $\infty$-category of differential $E_{*}E$-comodules.

Recall that in $\S\ref{section:goerss_hopkins_theory}$ we have used Goerss-Hopkins theory to associate to each of $\spectra_{E}$ and $\dcat(E_{*}E)$ a tower of $\infty$-categories of what we called topological, respectively algebraic, potential $l$-stages. Then, we've shown that if $p > n+1$, so that the abelian category of $E_{*}E$-comodules has finite homological dimension, the functors $\spectra_{E} \rightarrow \mathcal{M}^{top}_{l}$ and $\dcat(E_{*}E) \rightarrow \mathcal{M}^{alg}_{l}$ induce equivalences on homotopy $k$-categories, where $k = (l +1 - n^{2} - n)$. 

Thus, we see that the proof of the main result can be reduced to the task of comparing the homotopy categories of the $\infty$-categories $\mathcal{M}_{l}^{top}$ and $\mathcal{M}_{l}^{alg}$ of potential $l$-stages. This will be done in this section, in fact, we will show something stronger, namely that the two are, for $l$ in a certain explicit range, equivalent as $\infty$-categories. To do so, we will employ the Bousfield adjunction studied in \S\ref{section:the_bousfield_adjunction}.

\begin{assumption}
\label{assumption:l_low_enough_so_that_bousfield_adjunction_exists}
In this section, we assume that $2p - 2 > n^2+n$ and $l \leq  2p-2-n^{2}-n-1$.
\end{assumption}
Under the above assumption we have constructed the Bousfield adjunction 

\begin{center}
$\beta^{*} \dashv \beta_{*}: \dcat(\ComodE)_{\geq 0} \rightleftarrows Mod_{\monunitt{l}}(\synspectra)$, 
\end{center}
see \textbf{Definition \ref{defin:bousfield_adjunction}}. By \textbf{Proposition \ref{prop:composite_of_bousfield_splitting_and_free_module_extends_to_a_cocontinuous_functor}}, $\beta^{*}$ is symmetric monoidal, and it follows formally that the right adjoint $\beta_{*}$ is lax symmetric monoidal. We deduce that $\beta_{*} \monunitt{l}$ is canonically a commutative algebra in the derived $\infty$-category. 

\begin{lemma}
\label{lemma:homotopy_of_beta_monunittl}
Let $\tau: \monunitt{l}[-1] \rightarrow \monunitt{l}$ be induced from the shift map of the monoidal unit of synthetic spectra, so that $\pi_{*} \monunitt{l} \simeq (\pi_{0} \monunit)[\tau] / (\tau)^{l+1}$ in the sense that $\pi_{k} \monunitt{l} \simeq (\pi_{0} \monunit)[-k]$ for $k \leq l$, zero otherwise, and $\tau$ acts isomorphically on $\pi_{*} \monunitt{l}$ through degrees up to $l$. Then, $\pi_{*} \beta_{*} \monunitt{l} \simeq E_{*}[\beta_{*} \tau] / (\beta_{*} \tau)^{l+1}$ in the same sense.
\end{lemma}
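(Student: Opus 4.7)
The plan is to reduce the computation of $\pi_{*}\beta_{*}\monunitt{l}$ to the heart, where the Bousfield adjunction restricts to an equivalence, and then to bootstrap the multiplicative information from the lax monoidal structure of $\beta_{*}$.

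First I would exploit \textbf{Lemma \ref{lemma:right_adjoint_in_bousfield_adjunction_preserves_homotopy_groups}} to obtain, for every $k$, a natural isomorphism $\pi_{k}\beta_{*}\monunitt{l} \simeq \beta_{*}\pi_{k}\monunitt{l}$ as objects of $\dcat(\ComodE)^{\heartsuit}_{\geq 0} \simeq \ComodE$. By hypothesis, $\pi_{k}\monunitt{l}$ is $(\pi_{0}\monunit)[-k]$ for $0 \leq k \leq l$ and zero otherwise, and $\pi_{0}\monunit \simeq E_{*}$ under $\synspectra^{\heartsuit} \simeq \ComodE$. The heart-level equivalence of \textbf{Lemma \ref{lemma:bousfield_adjunction_induces_an_equivalence_on_hearts}}, combined with the identification of $\beta^{*}$ at $l = 0$ from \textbf{Lemma \ref{lemma:bousfield_functor_extended_to_zero_modules_equivalent_to_the_inclusion}}, shows that $\beta_{*}$ sends $E_{*}$ in $\ComodE \simeq \Mod_{\monunitt{l}}(\synspectra)^{\heartsuit}$ to $E_{*}$ in $\ComodE \simeq \dcat(\ComodE)^{\heartsuit}_{\geq 0}$. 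This settles the additive part: $\pi_{k}\beta_{*}\monunitt{l} \simeq E_{*}[-k]$ in the claimed range and zero otherwise.

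Next I would define the class $\beta_{*}\tau \in \pi_{1}\beta_{*}\monunitt{l}$. Since $\beta_{*}$ is continuous it commutes with desuspension, so $\tau \colon \monunitt{l}[-1] \to \monunitt{l}$ is sent to a map $\beta_{*}\monunitt{l}[-1] \to \beta_{*}\monunitt{l}$, which is an element of $\pi_{1}\beta_{*}\monunitt{l}$ since $\beta_{*}\monunitt{l}$ is connective. Because $\beta_{*}$ is lax symmetric monoidal (as the right adjoint of the symmetric monoidal $\beta^{*}$), the object $\beta_{*}\monunitt{l}$ is canonically an algebra in $\dcat(\ComodE)_{\geq 0}$, and naturality of the lax monoidal structure maps ensures that the isomorphism $\pi_{*}\beta_{*}\monunitt{l} \simeq \beta_{*}\pi_{*}\monunitt{l}$ is one of graded algebras, where the right-hand side inherits its algebra structure from the lax monoidal image of $\pi_{*}\monunitt{l}$.

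The main remaining point, which I regard as the technical crux, is identifying the action of the class $\beta_{*}\tau$ with the $\beta_{*}$-image of the $\tau$-action. To do this I would write multiplication by $\tau$ on $\monunitt{l}$ as the composite
\[
\monunitt{l}[-1] \xrightarrow{\sim} \monunitt{l}[-1] \otimes_{\monunitt{l}} \monunitt{l} \xrightarrow{\tau \otimes 1} \monunitt{l} \otimes_{\monunitt{l}} \monunitt{l} \xrightarrow{\mu} \monunitt{l},
\]
apply $\beta_{*}$, and then use the lax monoidal coherence map $\beta_{*}(-) \otimes \beta_{*}(-) \to \beta_{*}((-) \otimes (-))$ to recognize the resulting map as multiplication by $\beta_{*}\tau$ in the algebra $\beta_{*}\monunitt{l}$. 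Since $\tau$-multiplication is an isomorphism $\pi_{k}\monunitt{l} \xrightarrow{\sim} \pi_{k+1}\monunitt{l}$ for $0 \leq k < l$, and $\beta_{*}$ on hearts is an equivalence, $\beta_{*}\tau$ acts as an isomorphism on $\pi_{*}\beta_{*}\monunitt{l}$ in the same range, yielding the claimed presentation $\pi_{*}\beta_{*}\monunitt{l} \simeq E_{*}[\beta_{*}\tau]/(\beta_{*}\tau)^{l+1}$.
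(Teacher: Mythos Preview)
Your proposal is correct and follows essentially the same approach as the paper: both invoke \textbf{Lemma \ref{lemma:right_adjoint_in_bousfield_adjunction_preserves_homotopy_groups}} to pass $\beta_{*}$ through $\pi_{*}$, then use \textbf{Lemma \ref{lemma:bousfield_adjunction_induces_an_equivalence_on_hearts}} to identify $\beta_{*}\pi_{0}\monunitt{l} \simeq E_{*}$. The paper is considerably more terse, simply asserting that commuting $\beta_{*}$ past $\pi_{*}$ immediately yields the truncated polynomial structure in $\beta_{*}\tau$; your more careful unpacking of the lax monoidal structure to justify the $\tau$-action identification is a legitimate elaboration of what the paper leaves implicit.
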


\begin{proof}
Let us recall that the shift map $\tau$ in synthetic spectra is defined in \cite{pstrkagowski2018synthetic}[4.27] and has the above property by \cite{pstrkagowski2018synthetic}[4.61]. 
Since $\beta_{*}$ commutes with taking homotopy groups by \textbf{Lemma \ref{lemma:right_adjoint_in_bousfield_adjunction_preserves_homotopy_groups}}, we deduce that $\pi_{*} \beta_{*} \monunitt{l} \simeq (\beta_{*} \pi_{0} \monunitt{l})[\beta_{*} \tau](\beta_{*} \tau)^{l+1}$. Then, \textbf{Lemma \ref{lemma:bousfield_adjunction_induces_an_equivalence_on_hearts}} implies that $\beta_{*} \pi_{0} \monunitt{l} \simeq E_{*}$, in fact we have a canonical such isomorphism induced by the unit of the algebra $\beta_{*} \monunitt{l}$. 
\end{proof}

\begin{cor}
\label{cor:beta_beta_estar_isomorphic_to_truncation_of_free_associative_algebra}
Consider the composite $\Sigma E_{*} [-1] \rightarrow \Sigma \beta_{*} \monunitt{l} [-1] \rightarrow \beta_{*} \monunitt{l}$, where the first map is the unit and the second is $\beta^{*} \tau$. Then, the induced map $F(\Sigma E_{*}[-1])\rightarrow \beta_{*} \monunitt{l}$ from the free associative algebra on $\Sigma E_{*}[-1]$ induces an equivalence $F(\Sigma E_{*}[-1])_{\leq l} \simeq \beta_{*} \monunitt{l}$.
\end{cor}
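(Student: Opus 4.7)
The strategy is to compute the homotopy groups of both sides and show that the induced map is a degreewise isomorphism through degree $l$. Since $\beta^{*}$ is symmetric monoidal, its right adjoint $\beta_{*}$ is lax symmetric monoidal, so $\beta_{*} \monunitt{l}$ carries a canonical commutative algebra structure in $\dcat(\ComodE)_{\geq 0}$. The specified map $\Sigma E_{*}[-1] \to \beta_{*} \monunitt{l}$ then extends uniquely by the universal property of the free associative algebra to an algebra map $\phi: F(\Sigma E_{*}[-1]) \to \beta_{*} \monunitt{l}$. By \textbf{Lemma \ref{lemma:homotopy_of_beta_monunittl}}, the target has homotopy concentrated in degrees $\leq l$, so the underlying map in $\dcat(\ComodE)_{\geq 0}$ factors through the $l$-truncation, giving $\phi_{\leq l}: F(\Sigma E_{*}[-1])_{\leq l} \to \beta_{*} \monunitt{l}$, which is what we wish to show is an equivalence.

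To prove this it suffices to check that $\phi_{\leq l}$ induces isomorphisms on $\pi_{k}$ for $0 \leq k \leq l$, since both sides are $l$-truncated connective objects of $\dcat(\ComodE)_{\geq 0}$. Using that $(\Sigma E_{*}[-1])^{\otimes n} \simeq \Sigma^{n} E_{*}[-n]$ because $E_{*}$ is the monoidal unit, one has $F(\Sigma E_{*}[-1]) \simeq \bigoplus_{n \geq 0} \Sigma^{n} E_{*}[-n]$ as an underlying object, giving $\pi_{*} F(\Sigma E_{*}[-1]) \simeq E_{*}[\tau]$ with $|\tau| = (1,-1)$; this matches the calculation of \textbf{Remark \ref{algebra_p_equivalent_to_the_free_algebra}}, which identifies $P$ with the same free algebra as an associative algebra. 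On the other hand, the target has $\pi_{*} \beta_{*} \monunitt{l} \simeq E_{*}[\beta_{*} \tau]/(\beta_{*} \tau)^{l+1}$ by \textbf{Lemma \ref{lemma:homotopy_of_beta_monunittl}}. By construction, $\phi$ sends the generator $\tau \in \pi_{1} F(\Sigma E_{*}[-1])$ to $\beta_{*} \tau \in \pi_{1} \beta_{*} \monunitt{l}$, since the generating map $\Sigma E_{*}[-1] \to \beta_{*} \monunitt{l}$ is exactly the composite displayed in the statement. Multiplicativity of $\phi$ then forces $\tau^{k} \mapsto (\beta_{*} \tau)^{k}$ for every $k$, and in the range $0 \leq k \leq l$ both sides are freely generated copies of $E_{*}[-k]$, so $\phi_{\leq l}$ is a degreewise isomorphism.

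The main point requiring care is the identification on $\pi_{1}$, which must match the definition of $\beta_{*} \tau$ used in \textbf{Lemma \ref{lemma:homotopy_of_beta_monunittl}}; this is however immediate from the explicit prescription of the generating map as the composite $\Sigma E_{*}[-1] \to \Sigma \beta_{*} \monunitt{l}[-1] \to \beta_{*} \monunitt{l}$ with second factor $\beta_{*} \tau$. Everything else is formal: the free--forgetful adjunction provides $\phi$, lax monoidality of $\beta_{*}$ provides the algebra structure on the target, and no further input about Goerss--Hopkins theory or the fine structure of $\synspectra$ is needed beyond the homotopy computation already supplied by \textbf{Lemma \ref{lemma:homotopy_of_beta_monunittl}}.
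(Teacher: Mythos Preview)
Your proof is correct and follows exactly the same approach as the paper's own proof, which is a one-liner citing \textbf{Lemma \ref{lemma:homotopy_of_beta_monunittl}} to conclude that the constructed map is an isomorphism on homotopy in degrees up to $l$. You have simply unpacked the details of that claim (identifying $\pi_{*}F(\Sigma E_{*}[-1]) \simeq E_{*}[\tau]$ and checking that $\phi$ sends $\tau$ to $\beta_{*}\tau$), which the paper leaves implicit.
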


\begin{proof}
It is immediate from \textbf{Lemma \ref{lemma:homotopy_of_beta_monunittl}} that the constructed map $F(\Sigma E_{*}[-1]) \rightarrow \beta_{*} \monunitt{l}$ is an isomorphism on homotopy in degrees up to $l$, proving the claim.
\end{proof}

Notie that even though \textbf{Lemma \ref{lemma:homotopy_of_beta_monunittl}} roughly says that the homotopy of $\monunitt{l}$ and $\beta_{*} \monunitt{l}$ is the same, \textbf{Corollary \ref{cor:beta_beta_estar_isomorphic_to_truncation_of_free_associative_algebra}} shows that this has stronger consequence for the latter than for the former, the difference being that the unit $E_{*}$ of $\dcat(\ComodE)_{\geq 0}$ is discrete.

\begin{thm}
\label{thm:right_adjoint_to_betastar_lifts_to_a_symmon_equiv}
The functor $\beta_{*}: \Mod_{\monunitt{l}}(\synspectra) \rightarrow \dcat(\ComodE)_{\geq 0}$ lifts to a symmetric monoidal adjoint equivalence $\gamma^{*} \dashv \gamma_{*}: \Mod_{\beta^{*} \monunitt{l}}(\dcat(\ComodE)_{\geq 0}) \rightleftarrows  \Mod_{\monunitt{l}}(\synspectra)$.
\end{thm}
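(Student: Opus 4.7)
The plan is to identify the adjunction $\beta^{*} \dashv \beta_{*}$ of Section 4 with extension/restriction of scalars along the unit map $E_{*} \to \beta_{*}\monunitt{l}$ in $\dcat(\ComodE)_{\geq 0}$. By \textbf{Theorem \ref{thm:bousfield_adjunction_is_monadic}} this adjunction is monadic, so $\Mod_{\monunitt{l}}(\synspectra)$ is equivalent to modules over the monad $T := \beta_{*}\beta^{*}$ on $\dcat(\ComodE)_{\geq 0}$. The main task is therefore to upgrade the equivalence of endofunctors $T(-) \simeq (-) \otimes \beta_{*}\monunitt{l}$ to an equivalence of commutative algebras in monads, and then handle the symmetric monoidal enhancement.

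First I would verify the projection formula: for $M \in \dcat(\ComodE)_{\geq 0}$, the lax symmetric monoidal structure on $\beta_{*}$ together with the unit of the adjunction produces a canonical natural map
\[
M \otimes \beta_{*}\monunitt{l} \;\longrightarrow\; \beta_{*}\beta^{*}M.
\]
Both sides are cocontinuous in $M$: the right-hand side because $\beta^{*}$ is a left adjoint and $\beta_{*}$ is cocontinuous by \textbf{Lemma \ref{lemma:bousfield_right_adjoint_is_cocontinuous}}, and the left-hand side by construction. Since the map is tautologically an equivalence for $M = E_{*}$, and since $E_{*}$ and its shifts generate $\dcat(\ComodE)_{\geq 0}$ under colimits, the map is an equivalence in general. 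This identifies the monad $T$ with the algebra monad $(-) \otimes \beta_{*}\monunitt{l}$.

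Second, I would promote this identification to the symmetric monoidal level. The functor $\beta^{*}$ is symmetric monoidal, so by the universal property of modules over a commutative algebra it factors as
\[
\dcat(\ComodE)_{\geq 0} \longrightarrow \Mod_{\beta_{*}\monunitt{l}}(\dcat(\ComodE)_{\geq 0}) \xrightarrow{\;\gamma^{*}\;} \Mod_{\monunitt{l}}(\synspectra),
\]
where the first arrow is free extension of scalars and $\gamma^{*}$ is the symmetric monoidal functor obtained from the counit $\beta^{*}\beta_{*}\monunitt{l} \to \monunitt{l}$. The right adjoint $\gamma_{*}$ exists by presentability and is, concretely, the lift of $\beta_{*}$ to $\beta_{*}\monunitt{l}$-modules provided by the lax symmetric monoidal structure on $\beta_{*}$. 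By construction, precomposing $\gamma^{*}$ with free extension recovers $\beta^{*}$, and postcomposing $\gamma_{*}$ with the forgetful recovers $\beta_{*}$; so the composite of the two adjunctions
\[
\dcat(\ComodE)_{\geq 0} \rightleftarrows \Mod_{\beta_{*}\monunitt{l}}(\dcat(\ComodE)_{\geq 0}) \rightleftarrows \Mod_{\monunitt{l}}(\synspectra)
\]
is the Bousfield adjunction.

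Finally, the projection formula identifies the monads associated to the left adjunction (free extension) and to the composite (which is $T$), so the canonical comparison functor $\Mod_{\beta_{*}\monunitt{l}}(\dcat(\ComodE)_{\geq 0}) \to \Mod_{T}(\dcat(\ComodE)_{\geq 0})$ is an equivalence. Combined with monadicity of the composite supplied by \textbf{Theorem \ref{thm:bousfield_adjunction_is_monadic}}, this forces $\gamma^{*} \dashv \gamma_{*}$ to be an adjoint equivalence. I expect the main obstacle to be purely bookkeeping: organising the symmetric monoidal compatibility so that $\gamma^{*}$ is constructed as a symmetric monoidal functor in the sense of $\infty$-operads, for which one invokes the universal property of $\Mod_{A}(\ccat)$ in $\mathrm{CAlg}$ (e.g.\ \cite{higher_algebra}[4.8.3.4]). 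Once this is in place, the fact that $\gamma^{*}$ is also an equivalence of underlying $\infty$-categories automatically promotes it to a symmetric monoidal equivalence.
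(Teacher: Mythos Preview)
Your proposal is correct and follows the same core strategy as the paper: both arguments hinge on monadicity of $\beta^{*}\dashv\beta_{*}$ together with the projection formula $M\otimes\beta_{*}\monunitt{l}\simeq\beta_{*}\beta^{*}M$, and then invoke a monad comparison over $\dcat(\ComodE)_{\geq 0}$ (the paper cites \cite{higher_algebra}[4.7.4.16] for this step).

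The differences are organisational. The paper first constructs $\gamma_{*}$ as a lax symmetric monoidal functor via the span $\Mod_{\monunitt{l}}(\synspectra)\leftarrow\Mod_{\monunitt{l}}(\Mod_{\monunitt{l}}(\synspectra))\to\Mod_{\beta_{*}\monunitt{l}}(\dcat(\ComodE)_{\geq 0})$, proves it is an equivalence, and only then checks by a separate colimit argument that the induced oplax structure on $\gamma^{*}$ is strong. You instead build $\gamma^{*}$ directly as a symmetric monoidal functor from the universal property of $\Mod_{A}$ in $\mathrm{CAlg}(\mathrm{Pr}^{L})$, which saves that last step. Your verification of the projection formula is also cleaner: you reduce to $M=E_{*}$ by cocontinuity and generation, whereas the paper reduces to $M\in\ComodE^{fp}$ and then computes homotopy groups explicitly using \cite{pstrkagowski2018synthetic}[4.61] and \textbf{Lemmas \ref{lemma:bousfield_adjunction_induces_an_equivalence_on_hearts}, \ref{lemma:right_adjoint_in_bousfield_adjunction_preserves_homotopy_groups}}. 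Both routes work; yours is shorter, while the paper's makes the identification of $\gamma_{*}$ with the functor induced by $\beta_{*}$ on module categories more explicit, which is convenient for the subsequent \textbf{Corollary \ref{cor:modules_over_monunitt_and_truncation_of_connective_periodic_unit_equivalent_as_infty_cats}}.
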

 
\begin{proof}
We have a span of symmetric monoidal $\infty$-categories and lax symmetric monoidal functors 

\begin{center}
$\Mod_{\monunitt{l}}(\synspectra) \leftarrow \Mod_{\monunitt{l}}(\Mod_{\monunitt{l}}(\synspectra)) \rightarrow \Mod_{\beta_{*} \monunitt{l}}(\dcat(\ComodE)_{\geq 0})$, 
\end{center}
where the right one is induced by $\beta_{*}$ and the left one is the forgetful functor. Since $\monunitt{l}$ is the monoidal unit of $\Mod_{\monunitt{l}}(\synspectra)$, the left one is an equivalence. Choosing an inverse and composing we obtain a lax symmetric monoidal functor $\gamma_{*}: \Mod_{\monunitt{l}}(\synspectra) \rightarrow \Mod_{\beta_{*} \monunitt{l}}(\dcat(\ComodE)_{\geq 0})$. 

We claim that $\gamma_{*}$ is an equivalence. By construction, it fits into a commutative diagram

\begin{center}
	\begin{tikzpicture}
		\node (TL) at (0, 1) {$ \Mod_{\monunitt{l}}(\synspectra) $};
		\node (TR) at (4, 1) {$ \Mod_{\beta_{*} \monunitt{l}}(\dcat(\ComodE)_{\geq 0}) $};
		\node (BM) at (2, 0) {$ \dcat(\ComodE)_{\geq 0}$};
		
		\draw [->] (TL) to (TR);
		\draw [->] (TL) to (BM);
		\draw [->] (TR) to (BM);
	\end{tikzpicture},
\end{center}
where the right arrow is the forgetful functor. Both vertical arrows are cocontinuous and present their source as monadic over $\dcat(\ComodE)_{\geq 0}$, the left one by \textbf{Lemma \ref{lemma:bousfield_right_adjoint_is_cocontinuous}} and \textbf{Theorem \ref{thm:bousfield_adjunction_is_monadic}}. 

Thus, by \cite[4.7.4.16]{higher_algebra} it is enough to verify that that for any $M \in \dcat(\ComodE)_{\geq 0}$, the map $\beta_{*} \monunitt{l} \otimes_{E_{*}} M \rightarrow \beta_{*} \beta^{*} M$ induced by the unit is an equivalence. Since both the source and the target of this map are clearly cocontinuous in $M$, we can assume that $M$ is a comodule which is finitely generated and projective over $E_{*}$. 

We want to show that the map $\pi_{*} (\beta_{*} \monunitt{l} \otimes_{E_{*}} M) \rightarrow \pi_{*} \beta_{*} \beta^{*} M$ is an isomorphism. Since $M$ is finitely generated projective, by construction we have $\beta^{*}(M) = \monunitt{l} \otimes \nu \beta (M)$ and we deduce that $\pi_{*} \beta^{*} M \simeq \pi_{*} \monunitt{l} \otimes _{\pi_{0} \monunitt{l}} \pi_{0} \beta^{*}M$ by the calculation of the homotopy of synthetic analogues \cite{pstrkagowski2018synthetic}[4.61]. Then, \textbf{Lemma \ref{lemma:right_adjoint_in_bousfield_adjunction_preserves_homotopy_groups}} implies that 

\begin{center}
$\pi_{*} \beta_{*} \beta^{*} M \simeq \beta_{*} \pi_{*} \beta^{*} M \simeq \beta_{*} \pi_{*} \monunitt{l} \otimes _{\beta_{*} \pi_{0} \monunitt{l}} \beta_{*} \pi_{0} \beta^{*} M \simeq \pi_{*} \beta_{*} \monunitt{l} \otimes _{E_{*}} M$,
\end{center}
where the isomorphisms $\beta_{*} \pi_{0} \monunitt{l} \simeq E_{*}$ and $\beta_{*} \pi_{0} \beta^{*} M \simeq M$ follow from \textbf{Lemma \ref{lemma:bousfield_adjunction_induces_an_equivalence_on_hearts}}, which says that $\beta^{*} \dashv \beta_{*}$ induces an adjoint equivalence on the hearts. 

We deduce that the homotopy groups of $\beta_{*} \monunitt{l} \otimes _{E_{*}} M$ and $\beta_{*} \beta^{*} M$ are abstractly isomorphic, in fact coincide with a free $\pi_{*} \beta_{*} \monunitt{l}$-module. Thus, to prove that $\pi_{*} (\beta_{*} \monunitt{l} \otimes_{E_{*}} M) \rightarrow \pi_{*} \beta_{*} \beta^{*} M$ is an isomorphism it is enough to show that it is an isomorphism in degree zero, which is again a consequence of \textbf{Lemma \ref{lemma:bousfield_adjunction_induces_an_equivalence_on_hearts}}. This ends the proof that $\gamma_{*}$ is an equivalence.

It follows that we can choose a left adjoint $\gamma^{*}: \Mod_{\beta_{*} \monunitt{l}} (\dcat(\ComodE)_{\geq 0}) \rightarrow \Mod_{\monunitt{l}}(\synspectra) $, and since $\gamma_{*}$ is lax symmetric monoidal we deduce that $\gamma^{*}$ has a canonical structure of an oplax symmetric monoidal functor. We claim that it is in fact symmetric monoidal, which shows that the same is true for its right adjoint, ending the proof of the theorem.

We have to verify that for any $M, N \in \Mod_{\beta_{*} \monunitt{l}}(\dcat(\ComodE)_{\geq 0})$, the structure map 
$\gamma^{*} (M \otimes_{\beta_{*} \monunitt{l}} N) \rightarrow \gamma^{*} M \otimes _{\monunitt{l}} \gamma^{*} N$ is an equivalence. Since by construction $\gamma_{*}$ lifts $\beta_{*}$, by passing to left adjoints we see that $\gamma^{*} (\beta_{*} \monunitt{l} \otimes _{E_{*}} M^{\prime}) \simeq \beta^{*} M^{\prime}$ for any $M^\prime \in \dcat(\ComodE)_{\geq 0}$. Then, because $\beta^{*}$ is symmetric monoidal, we deduce that the structure map of $\gamma^{*}$ is an equivalence when $M \simeq \beta_{*} \monunitt{l} \otimes _{E_{*}} M^\prime$ and $N \simeq \beta_{*} \monunitt{l} \otimes _{E_{*}} N^\prime$ for $M^\prime, N^\prime \in \dcat(\ComodE)_{\geq 0}$.

We now consider the collection of those $M \in \Mod_{\beta_{*} \monunitt{l}}(\dcat(\ComodE)_{\geq 0})$ such that 

\begin{center}
$\gamma^{*} (M \otimes _{\beta^{*} \monunitt{l}} \beta_{*} \monunitt{l} \otimes _{E_{*}} N^\prime) \rightarrow \gamma^{*} M \otimes_{\monunitt{l}} \gamma^{*} (\beta_{*} \monunitt{l} \otimes _{E_{*}} N^\prime)$
\end{center}
is an equivalence for all $N^\prime \in \dcat(\ComodE)_{\geq 0}$. By the previous observation, this holds when $M$ itself is of the form $\beta_{*} \monunitt{l} \otimes _{E_{*}} M^\prime$, and since the collection of $M$ satisfying the above property is clearly closed under colimits, we deduce that all $M \in \Mod_{\beta_{*} \monunitt{l}}(\dcat(\ComodE)_{\geq 0})$ belong to it. 

Similarly, by considering colimits in the other variable we see that if $M$ has the above property, then $\gamma^{*} (M \otimes _{\beta_{*} \monunitt{l}} N) \rightarrow \gamma^{*} M \otimes_{\monunitt{l}} \gamma^{*} N$ is an equivalence for all $N \in \Mod_{\beta_{*} \monunitt{l}}(\dcat(\ComodE)_{\geq 0})$, which is what we wanted to show.
 \end{proof}
 
Recall that in \textbf{Definition \ref{defin:periodicity_algebra}} we have introduced an algebra $P$ in $\dcat(\ComodE)_{\geq 0}$ with homotopy groups $\pi_{*} P \simeq E_{*}[\tau]$, which has the property that the derived $\infty$-category $\dcat(E_{*}E)$ of differential comodules can be identified with a certain subcategory of connective $P$-modules. This observation was then used to construct the Goerss-Hopkins tower of $\dcat(E_{*}E)$, which was informally induced from the Postnikov tower of $P$. 

\begin{cor}
\label{cor:modules_over_monunitt_and_truncation_of_connective_periodic_unit_equivalent_as_infty_cats}
There exists an equivalence $\delta^{*}: \Mod_{\monunitt{l}}(\synspectra) \rightarrow \Mod_{P_{\leq l}}(\dcat(\ComodE)_{\geq 0})$ which fits into a commutative diagram

\begin{center}
	\begin{tikzpicture}
		\node (TL) at (0, 1.4) {$ \Mod_{\monunitt{l}}(\synspectra) $};
		\node (TR) at (4, 1.4) {$ \Mod_{P_{\leq l}}(\dcat(\ComodE)_{\geq 0}) $};
		\node (BL) at (0, 0) {$ \Mod_{\monunitt{0}}(\synspectra)  $};
		\node (BR) at (4, 0) {$ \Mod_{P_{\leq 0}}(\dcat(\ComodE)_{\geq 0}) $};
		
		\draw [->] (TL) to node[auto] {$ \delta^{*} $} (TR);
		\draw [->] (TL) to node[auto] {$ \monunitt{0} \otimes _{\monunitt{l}} - $} (BL);
		\draw [->] (TR) to node[right] {$ P_{\leq 0} \otimes _{P_{\leq l}} - $} (BR);
		\draw [->] (BL) to (BR);
	\end{tikzpicture}
\end{center}
in which both horizontal arrows are equivalences.
\end{cor}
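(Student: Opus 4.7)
The plan is to deduce the equivalence from \textbf{Theorem \ref{thm:right_adjoint_to_betastar_lifts_to_a_symmon_equiv}} by identifying the algebra $\beta_{*}\monunitt{l}$ with $P_{\leq l}$ as associative algebras in $\dcat(\ComodE)_{\geq 0}$. Specifically, \textbf{Corollary \ref{cor:beta_beta_estar_isomorphic_to_truncation_of_free_associative_algebra}} provides an equivalence $F(\Sigma E_{*}[-1])_{\leq l} \simeq \beta_{*}\monunitt{l}$ of associative algebras, while \textbf{Remark \ref{algebra_p_equivalent_to_the_free_algebra}} yields $P \simeq F(\Sigma E_{*}[-1])$ as associative algebras. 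Truncating the latter at $l$ and composing, I obtain an equivalence $\beta_{*}\monunitt{l} \simeq P_{\leq l}$ of associative algebras. The induced restriction-of-scalars equivalence $\Mod_{\beta_{*}\monunitt{l}}(\dcat(\ComodE)_{\geq 0}) \simeq \Mod_{P_{\leq l}}(\dcat(\ComodE)_{\geq 0})$, composed with the inverse of the equivalence $\gamma^{*}$ of \textbf{Theorem \ref{thm:right_adjoint_to_betastar_lifts_to_a_symmon_equiv}}, then produces the desired functor $\delta^{*}$.

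To verify that the square commutes, one checks that these algebra identifications are compatible with the truncation maps to $l=0$. Both families $\{\beta_{*}\monunitt{l}\}_{l}$ and $\{P_{\leq l}\}_{l}$ should be viewed as the Postnikov tower of the free associative algebra $F(\Sigma E_{*}[-1])$: on the topological side, the map constructed in \textbf{Corollary \ref{cor:beta_beta_estar_isomorphic_to_truncation_of_free_associative_algebra}} is natural in the truncation, since both the unit $E_{*} \to \beta_{*}\monunitt{l}$ and the class $\beta^{*}\tau$ used to build it are compatible with the morphism $\monunitt{l} \to \monunitt{0}$. Hence the pointwise equivalences assemble into a morphism of towers of associative algebras, and the functoriality of extension of scalars then gives commutativity of the outer square of module $\infty$-categories. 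The bottom horizontal arrow is an equivalence as the $l=0$ case of the same construction: $P_{\leq 0} \simeq E_{*}$, so $\Mod_{P_{\leq 0}}(\dcat(\ComodE)_{\geq 0}) \simeq \dcat(\ComodE)_{\geq 0}$, while the analogous identification $\Mod_{\monunitt{0}}(\synspectra) \simeq \dcat(\ComodE)_{\geq 0}$ is precisely \textbf{Lemma \ref{lemma:bousfield_functor_extended_to_zero_modules_equivalent_to_the_inclusion}}.

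The main obstacle I anticipate is the coherence of the algebra identifications as $l$ varies. Producing a pointwise equivalence $\beta_{*}\monunitt{l} \simeq P_{\leq l}$ for each $l$ is straightforward from \textbf{Corollary \ref{cor:beta_beta_estar_isomorphic_to_truncation_of_free_associative_algebra}}, but forcing the diagram to commute genuinely requires a morphism of Postnikov towers of associative algebras, not merely a family of equivalences. The crux is therefore tracking that the construction of the free-algebra map $F(\Sigma E_{*}[-1]) \to \beta_{*}\monunitt{l}$ is natural with respect to the Postnikov truncations $\monunitt{l} \to \monunitt{l-1}$, so that the equivalences on successive truncations glue. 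Once this naturality is in hand, the rest of the argument is purely formal.
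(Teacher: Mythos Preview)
Your proposal is correct and follows essentially the same route as the paper's proof: use \textbf{Theorem \ref{thm:right_adjoint_to_betastar_lifts_to_a_symmon_equiv}} to identify $\Mod_{\monunitt{l}}(\synspectra)$ with $\Mod_{\beta_{*}\monunitt{l}}(\dcat(\ComodE)_{\geq 0})$, then combine \textbf{Remark \ref{algebra_p_equivalent_to_the_free_algebra}} and \textbf{Corollary \ref{cor:beta_beta_estar_isomorphic_to_truncation_of_free_associative_algebra}} to obtain a preferred equivalence $\beta_{*}\monunitt{l}\simeq P_{\leq l}$ of associative algebras, and pass to module $\infty$-categories. The paper handles the commutativity you worry about in exactly the way you suggest, by noting that the preferred equivalences through the free associative algebra on $\Sigma E_{*}[-1]$ fit into a commutative square of associative algebras under truncation, so your caution about coherence is well placed but no additional idea is needed.
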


\begin{proof}
In \textbf{Theorem \ref{thm:right_adjoint_to_betastar_lifts_to_a_symmon_equiv}} we have constructed $\gamma^{*}: \Mod_{\monunitt{l}}(\synspectra) \rightarrow Mod_{\beta_{*} \monunitt{l}}(\dcat(\ComodE)_{\geq 0})$, which is a symmetric monoidal equivalence of $\infty$-categories. This yields a commutative diagram 

\begin{center}
	\begin{tikzpicture}
		\node (TL) at (0, 1.4) {$ \Mod_{\monunitt{l}}(\synspectra) $};
		\node (TR) at (4.3, 1.4) {$ \Mod_{\beta_{*} \monunitt{l}}(\dcat(\ComodE)_{\geq 0}) $};
		\node (BL) at (0, 0) {$ \Mod_{\monunitt{0}}(\synspectra)  $};
		\node (BR) at (4.3, 0) {$ \Mod_{(\beta_{*} \monunitt{l}) _{\leq 0}}(\dcat(\ComodE)_{\geq 0}) $};
		
		\draw [->] (TL) to node[auto] {$ \gamma^{*} $} (TR);
		\draw [->] (TL) to node[auto] {$ \monunitt{0} \otimes _{\monunitt{l}} - $} (BL);
		\draw [->] (TR) to node[right] {$ (\beta_{*} \monunitt{l})_{\leq 0} \otimes _{\beta_{*} \monunitt{l}} - $} (BR);
		\draw [->] (BL) to (BR);
	\end{tikzpicture},
\end{center}
where the vertical arrows are obtained by tensoring with the discretization of the unit; clearly the bottom horizontal one is also an equivalence.

Then, as a combination of \textbf{Remark \ref{algebra_p_equivalent_to_the_free_algebra}} and \textbf{Corollary \ref{cor:beta_beta_estar_isomorphic_to_truncation_of_free_associative_algebra}}, we see that the two algebras $\beta_{*} \monunitt{l}$ and $P_{\leq l}$ are equivalent as associative algebras, both being equivalent to the truncation of the free associative algebra on $\Sigma E_{*}[-1]$. In fact, in both cases we have constructed preferred such equivalences, which in turn determines an equivalence $\beta^{*} \monunitt{l} \rightarrow P_{\leq l}$. We obtain a commutative diagram

\begin{center}
	\begin{tikzpicture}
		\node (TL) at (0, 1) {$ \beta_{*} \monunitt{l} $};
		\node (TR) at (2, 1) {$ P_{\leq l} $};
		\node (BL) at (0, 0) {$ (\beta_{*} \monunitt{l})_{\leq 0} $}; 
		\node (BR) at (2, 0) {$ P_{\leq 0} $};
		
		\draw [->] (TL) to (TR);
		\draw [->] (TL) to  (BL);
		\draw [->] (TR) to (BR);
		\draw [->] (BL) to (BR);
	\end{tikzpicture},
\end{center}
of associative algebras in $\dcat(\ComodE)_{\geq 0}$. Then, by taking the associated diagram of module $\infty$-categories and extension of scalars functors and pasting it with the one constructed above yields a square as in the statement of the theorem.
\end{proof}

\begin{rem}
The two $\infty$-categories appearing in the statement of \textbf{Corollary \ref{cor:modules_over_monunitt_and_truncation_of_connective_periodic_unit_equivalent_as_infty_cats}} both carry symmetric monoidal structures, but the equivalence $\delta^{*}$ we have constructed is not necessarily symmetric monoidal. 

By delving into the proof, one sees that the issue is that we know that $\beta_{*} \monunitt{l}$ and $P_{\leq l}$ are equivalent as associative algebras, which is enough to identify their $\infty$-categories of modules, but we don't know whether they're equivalent as commutative algebras.
\end{rem}

\begin{cor}
\label{cor:moduli_of_topological_and_algebraic_potential_l_stages_equivalent_for_low_l}
The equivalence $\delta^{*}: \Mod_{\monunitt{l}}(\synspectra) \rightarrow \Mod_{P_{\leq l}}(\dcat(\ComodE)_{\geq 0})$ restricts to an equivalence $\mathcal{M}_{l}^{top} \simeq \mathcal{M}_{l}^{alg}$ between the $\infty$-categories of topological and algebraic potential $l$-stages. 
\end{cor}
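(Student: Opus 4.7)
The plan is to simply unwind the two definitions of potential $l$-stage and transport them across the commutative square of \textbf{Corollary \ref{cor:modules_over_monunitt_and_truncation_of_connective_periodic_unit_equivalent_as_infty_cats}}. By \textbf{Definition \ref{defin:topological_potential_l_stage}}, an object $X \in \Mod_{\monunitt{l}}(\synspectra)$ lies in $\mathcal{M}_l^{top}$ if and only if $\monunitt{0} \otimes_{\monunitt{l}} X$ is discrete; by \textbf{Definition \ref{defin:algebraic_potential_l_stage}}, an object $M \in \Mod_{P_{\leq l}}(\dcat(\ComodE)_{\geq 0})$ lies in $\mathcal{M}_l^{alg}$ if and only if $P_{\leq 0} \otimes_{P_{\leq l}} M$ is discrete. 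Since $\delta^*$ is already known to be an equivalence on the ambient $\infty$-categories, it suffices to verify that these two discreteness conditions correspond to each other across $\delta^*$.

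By commutativity of the square, for any $X \in \Mod_{\monunitt{l}}(\synspectra)$ the object $P_{\leq 0} \otimes_{P_{\leq l}} \delta^*(X)$ is the image of $\monunitt{0} \otimes_{\monunitt{l}} X$ under the bottom horizontal equivalence of the square. The only nontrivial point is therefore that this bottom equivalence preserves discreteness. This bottom equivalence factors as the composite of the equivalence $\Mod_{\monunitt{0}}(\synspectra) \simeq \dcat(\ComodE)_{\geq 0}$ of \textbf{Lemma \ref{lemma:bousfield_functor_extended_to_zero_modules_equivalent_to_the_inclusion}}, which is $t$-exact and identifies both hearts with $\ComodE$, with the tautological identification $\dcat(\ComodE)_{\geq 0} \simeq \Mod_{P_{\leq 0}}(\dcat(\ComodE)_{\geq 0})$ coming from the fact that $P_{\leq 0} \simeq E_*$ is the monoidal unit. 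Both factors are compatible with the $t$-structures on either side, so the composite sends discrete objects to discrete objects and reflects discreteness.

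Putting these observations together, $X \in \mathcal{M}_l^{top}$ if and only if $\delta^*(X) \in \mathcal{M}_l^{alg}$, and $\delta^*$ therefore restricts to a fully faithful and essentially surjective functor between these subcategories, giving the claimed equivalence. There is no real obstacle here: all the substantive work has been absorbed into \textbf{Theorem \ref{thm:right_adjoint_to_betastar_lifts_to_a_symmon_equiv}} and \textbf{Corollary \ref{cor:modules_over_monunitt_and_truncation_of_connective_periodic_unit_equivalent_as_infty_cats}}, and what remains here is pure bookkeeping with the square and the $t$-structures.
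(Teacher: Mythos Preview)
Your proof is correct and follows the same approach as the paper, which simply says the statement is immediate from \textbf{Corollary \ref{cor:modules_over_monunitt_and_truncation_of_connective_periodic_unit_equivalent_as_infty_cats}} after recalling the two definitions. You have expanded on the one point the paper leaves implicit, namely that the bottom horizontal equivalence preserves and reflects discreteness; your justification via \textbf{Lemma \ref{lemma:bousfield_functor_extended_to_zero_modules_equivalent_to_the_inclusion}} is fine, though one could also note more abstractly that in a Grothendieck prestable $\infty$-category the discrete objects are exactly the $0$-truncated ones, so any equivalence of such $\infty$-categories automatically preserves them.
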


\begin{proof}
Recall that a topological potential $l$-stage is an $\monunitt{l}$-module $X$ in synthetic spectra such that $\monunitt{0} \otimes _{\monunitt{l}} X$ is discrete, and an algebraic potential $l$-stage $M$ is a $P_{\leq l}$-module in $\dcat(\ComodE)_{\geq 0}$ such that $P_{\leq 0} \otimes _{P_{\leq l}} M$ is discrete. Thus, the statement is immediate from \textbf{Corollary \ref{cor:modules_over_monunitt_and_truncation_of_connective_periodic_unit_equivalent_as_infty_cats}}.
\end{proof}
The following is the main result of this note. 

\begin{thm}
\label{thm:at_large_primes_homotopy_category_of_e_local_spectra_same_as_periodic_chain_complexes}
Let $E$ be a $p$-local Landweber exact homology theory of height $n$ and assume that $p > n^{2} + n + 1 + \frac{k}{2}$. Then, there exists an equivalence $h_{k} \spectra_{E} \simeq h_{k}\dcat(E_{*}E)$ between the homotopy $k$-categories of $\spectra_{E}$ and the derived $\infty$-category of $E_{*}E$.
\end{thm}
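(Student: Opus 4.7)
The proof will be an assembly of the pieces already established: the two Goerss--Hopkins towers (topological and algebraic), the stabilization result Theorem~\ref{thm:both_elocal_cat_and_periodic_derived_cat_have_equivalent_homotopy_categories_to_moduli_of_high_potential_stages} saying that at large primes the layers of these towers compute the homotopy $k$-categories of $\spectra_E$ and $\dcat(E_*E)$ respectively, and Corollary~\ref{cor:moduli_of_topological_and_algebraic_potential_l_stages_equivalent_for_low_l} identifying the layers $\mathcal{M}_l^{top}$ and $\mathcal{M}_l^{alg}$ for small $l$ via the Bousfield adjunction. The plan is simply to find a single value of $l$ that is simultaneously \emph{large enough} to be in the stable range of both towers and \emph{small enough} to lie in the range where the Bousfield-based comparison applies, and verify that the hypothesis $p > n^2+n+1+\tfrac{k}{2}$ is exactly what ensures such an $l$ exists.

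Concretely, I will set $l := k + n^2 + n - 1$ so that $l + 1 - n^2 - n = k$ on the nose. I then need to check two things. First, that the hypotheses of Theorem~\ref{thm:both_elocal_cat_and_periodic_derived_cat_have_equivalent_homotopy_categories_to_moduli_of_high_potential_stages} are satisfied: the bound $p > n^2+n+1+k/2$ certainly implies $p > n+1$, so the theorem yields equivalences
\[
h_k \spectra_E \;\xrightarrow{\;\simeq\;}\; h_k \mathcal{M}_l^{top}, \qquad h_k \dcat(E_*E) \;\xrightarrow{\;\simeq\;}\; h_k \mathcal{M}_l^{alg}.
\]
Second, I must verify that this $l$ falls within Assumption~\ref{assumption:l_low_enough_so_that_bousfield_adjunction_exists}, namely that $2p-2 > n^2+n$ and $l \leq 2p-2-n^2-n-1$. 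The first is immediate from $p > n^2+n+1+k/2$ (for $k \geq 1$). For the second, note $l = k + n^2+n-1 \leq 2p-3-n^2-n$ rearranges to $k \leq 2p - 2 - 2(n^2+n)$, which is strictly implied by the hypothesis $2p > 2(n^2+n)+2+k$. Applying Corollary~\ref{cor:moduli_of_topological_and_algebraic_potential_l_stages_equivalent_for_low_l} then gives an equivalence $\mathcal{M}_l^{top} \simeq \mathcal{M}_l^{alg}$ of $\infty$-categories, which descends to an equivalence of homotopy $k$-categories.

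Composing the three equivalences
\[
h_k \spectra_E \;\simeq\; h_k \mathcal{M}_l^{top} \;\simeq\; h_k \mathcal{M}_l^{alg} \;\simeq\; h_k \dcat(E_*E)
\]
produces the desired equivalence. No step in this final assembly is computationally difficult; the whole content of the theorem has been moved into the preceding sections. The only real substance here is the bookkeeping of the inequalities, and the conceptual point worth emphasizing is that the exponent $n^2+n+1+k/2$ arises precisely as the average of the two constraints: Theorem~\ref{thm:both_elocal_cat_and_periodic_derived_cat_have_equivalent_homotopy_categories_to_moduli_of_high_potential_stages} demands $l$ be at least $k+n^2+n-1$ (a \emph{lower} bound growing with $k$), while the Bousfield-splitting construction demands $l$ be at most $2p-3-n^2-n$ (an \emph{upper} bound growing with $p$), and these two constraints are compatible exactly in the stated range of primes. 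The main obstacle, if there is one, is only notational --- keeping straight which $k$ (or $k'$) is meant in each invocation of the stabilization theorem --- and the proof itself should fit comfortably in a few lines.
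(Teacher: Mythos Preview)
Your proposal is correct and follows essentially the same approach as the paper's own proof: the paper also sets $l = n^{2}+n+k-1$, invokes \textbf{Theorem \ref{thm:both_elocal_cat_and_periodic_derived_cat_have_equivalent_homotopy_categories_to_moduli_of_high_potential_stages}} to identify $h_{k}\spectra_{E} \simeq h_{k}\mathcal{M}^{top}_{l}$ and $h_{k}\dcat(E_{*}E) \simeq h_{k}\mathcal{M}^{alg}_{l}$, checks the inequality so that \textbf{Assumption \ref{assumption:l_low_enough_so_that_bousfield_adjunction_exists}} holds, and then applies \textbf{Corollary \ref{cor:moduli_of_topological_and_algebraic_potential_l_stages_equivalent_for_low_l}} to conclude. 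Your additional commentary about the origin of the bound as the meeting point of the lower and upper constraints on $l$ is a helpful gloss, but the mathematical content is identical.
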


\begin{proof}
Let $l = n^{2}+n+k-1$, so that by \textbf{Theorem \ref{thm:both_elocal_cat_and_periodic_derived_cat_have_equivalent_homotopy_categories_to_moduli_of_high_potential_stages}}, the functors $\spectra_{E} \rightarrow \mathcal{M}^{top}_{l}$ and $\dcat(E_{*}E) \rightarrow \mathcal{M}^{alg}_{l}$ induce equivalences $h_{k}\spectra_{E} \simeq h_{k}\mathcal{M}_{l}^{top}$ and $h_{k}\dcat(E_{*}E) \simeq h_{k}\mathcal{M}_{l}^{alg}$ between homotopy $k$-categories.

Now suppose that $p > n^{2}+n+1+\frac{k}{2}$. This is the same as $2p \geq 2n^{2}+2n+2+k$, which implies $2p-2-n^{2}-n-1 \geq n^{2}+n+k-1$. Since $l = n^{2}+n+k-1$, we deduce that \textbf{Assumption \ref{assumption:l_low_enough_so_that_bousfield_adjunction_exists}} holds. Then, by \textbf{Corollary \ref{cor:moduli_of_topological_and_algebraic_potential_l_stages_equivalent_for_low_l}} there exists an equivalence $\mathcal{M}_{l}^{top} \simeq \mathcal{M}_{l}^{alg}$, which in particular implies $h_{k} \mathcal{M}_{l}^{top} \simeq h_{k} \mathcal{M}_{l}^{alg}$. This ends the proof.
\end{proof}

\begin{cor}
Let $p > n^{2}+n+2$. Then, there exists an equivalence $h \spectra_{E} \simeq h \dcat(E_{*}E)$ of triangulated categories. 
\end{cor}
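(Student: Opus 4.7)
The plan is to deduce this corollary directly from \textbf{Theorem \ref{thm:at_large_primes_homotopy_category_of_e_local_spectra_same_as_periodic_chain_complexes}} by specialising to $k = 2$ and then invoking the appendix result \textbf{Theorem \ref{thm:triangulated_structure_on_the_homotopy_cat_of_stable_cat_only_depends_on_homotopy_2_cat}}, which asserts that the triangulated structure on the homotopy category of a stable $\infty$-category is determined by the homotopy $2$-category. There is essentially no new content beyond combining these two inputs, so the proof should be very short.

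First I would observe that the hypothesis $p > n^{2}+n+2$ is equivalent to $p > n^{2}+n+1+\tfrac{2}{2}$, so setting $k = 2$ in \textbf{Theorem \ref{thm:at_large_primes_homotopy_category_of_e_local_spectra_same_as_periodic_chain_complexes}} applies. This produces an equivalence of homotopy $2$-categories $h_{2}\spectra_{E} \simeq h_{2}\dcat(E_{*}E)$. Since the ordinary homotopy category $h$ is obtained from $h_{2}$ by further $0$-truncating mapping spaces, this equivalence descends to an equivalence $h\spectra_{E} \simeq h\dcat(E_{*}E)$ of ordinary categories.

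The remaining point is to promote this equivalence to one of triangulated categories. Here I would apply the appendix, which shows that for a stable $\infty$-category $\ccat$ the suspension functor and the class of distinguished triangles on $h\ccat$ can be reconstructed functorially from $h_{2}\ccat$. Both $\spectra_{E}$ and $\dcat(E_{*}E)$ are stable, so applying this reconstruction to each side and using that our equivalence is already defined at the level of $h_{2}$ shows that it intertwines the triangulated structures. I don't expect any real obstacle; the only subtlety is making sure the appendix theorem is formulated so that any equivalence of homotopy $2$-categories of stable $\infty$-categories automatically respects the reconstructed triangulation, which is how \textbf{Theorem \ref{thm:triangulated_structure_on_the_homotopy_cat_of_stable_cat_only_depends_on_homotopy_2_cat}} is phrased in the introduction.
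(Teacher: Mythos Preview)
Your proposal is correct and follows essentially the same route as the paper: apply \textbf{Theorem \ref{thm:at_large_primes_homotopy_category_of_e_local_spectra_same_as_periodic_chain_complexes}} with $k=2$ to obtain $h_{2}\spectra_{E}\simeq h_{2}\dcat(E_{*}E)$, then invoke \textbf{Theorem \ref{thm:triangulated_structure_on_the_homotopy_cat_of_stable_cat_only_depends_on_homotopy_2_cat}} to conclude that the induced equivalence on homotopy categories is triangulated. The paper's proof is slightly terser but uses exactly these two ingredients in the same order.
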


\begin{proof}
Recall that $h \spectra_{E}$ and $\dcat(E_{*}E)$ are both canonically triangulated since they are homotopy categories of a stable $\infty$-category \cite[1.1.2.15]{higher_algebra}. By \textbf{Theorem \ref{thm:triangulated_structure_on_the_homotopy_cat_of_stable_cat_only_depends_on_homotopy_2_cat}}, which we prove in the appendix, this triangulated structure is in fact already determined by the homotopy $2$-category. Then, since $p > n^{2}+n+2$, \textbf{Theorem \ref{thm:at_large_primes_homotopy_category_of_e_local_spectra_same_as_periodic_chain_complexes}} implies that $h_{2} \spectra_{E} \simeq h_{2} \dcat(E_{*}E)$, ending the argument. 
\end{proof}

\FloatBarrier
\appendix

\section{Connectivity results for undercategories}

In this appendix we prove a connectivity result for mapping spaces in undercategories. This will be used in the next section to prove that the triangulated structure on the homotopy category of a stable $\infty$-category is already determined by the homotopy $2$-category. 

\begin{defin}
\label{defin:locally_n_connected_functor}
We say a functor $f: \ccat \rightarrow \dcat$ of $\infty$-categories is \emph{locally $n$-connected} if for any $c, c^\prime \in \ccat$, the induced morphism $\map_{\ccat}(c, c^\prime) \rightarrow \map_{\dcat}(f(c), f(c^\prime))$ is an $n$-connected map of spaces. 
\end{defin}
In more detail, we will prove that functors which are highly locally connected in the sense of \textbf{Definition \ref{defin:locally_n_connected_functor}} induce highly locally connected functors between undercategories, with effective bounds on the loss of connectivity, see \textbf{Proposition \ref{prop:connectivity_of_functors_between_undercategories}}.

\begin{lemma}
\label{lemma:connectivity_of_maps_between_pullbacks_of_spaces}
Suppose we have a natural transformation between spans of spaces 

\begin{center}
	\begin{tikzpicture}
		\node (TL) at (0, 1) {$ A $};
		\node (TM) at (1, 1) {$ C $};
		\node (TR) at (2, 1) {$ B $};
		\node (BL) at (0, 0) {$ A^\prime $};
		\node (BM) at (1, 0) {$ C^\prime $};
		\node (BR) at (2, 0) {$ B^\prime $};
		
		\draw [->] (TL) to (TM);
		\draw [->] (TR) to (TM);
		\draw [->] (BL) to (BM);
		\draw [->] (BR) to (BM);		
		
		\draw [->] (TL) to (BL);
		\draw [->] (TM) to (BM);
		\draw [->] (TR) to (BR);
	\end{tikzpicture},
\end{center}
where the outer vertical maps are $n$-connected and the middle one is $(n+1)$-connected. Then, $A \times _{C} B \rightarrow A^\prime \times _{C^\prime} B^\prime$ is $n$-connected. 
\end{lemma}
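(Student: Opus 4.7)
The plan is to factor the comparison map $A \times_C B \to A' \times_{C'} B'$ as a composite
\begin{center}
$A \times_C B \xrightarrow{\alpha} A \times_{C'} B \xrightarrow{\beta} A' \times_{C'} B \xrightarrow{\gamma} A' \times_{C'} B'$,
\end{center}
where the two middle pullbacks are formed using the composite maps $A \to C \to C'$ and $B \to C \to C'$. Since $n$-connected maps of spaces are closed under composition, it suffices to bound each factor. My strategy is to exhibit each of $\alpha, \beta, \gamma$ as a homotopy pullback of a simpler $n$-connected map, and then invoke the fact that the pullback of an $n$-connected map of spaces is $n$-connected.

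The two rightmost factors are straightforward. I will check that $\gamma$ is a base change of the $n$-connected map $B \to B'$ along the projection $A' \times_{C'} B' \to B'$, using the rewriting $A' \times_{C'} B \simeq (A' \times_{C'} B') \times_{B'} B$. Similarly $\beta$ is the base change of $A \to A'$ along the projection $A' \times_{C'} B \to A'$, hence $n$-connected.

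The main step is $\alpha$. I plan to identify it as the base change of the relative diagonal $\Delta \colon C \to C \times_{C'} C$ along the natural comparison map $A \times_{C'} B \to C \times_{C'} C$ sending $(a, b, \gamma')$ to $(f(a), g(b), \gamma')$; unwinding the universal property of iterated pullbacks shows that the resulting fiber product is indeed $A \times_C B$. It remains to observe that $\Delta$ is $n$-connected: its homotopy fiber over a point $(c_1, c_2, \gamma')$ is the space of paths $c_1 \to c_2$ in $C$ whose image in $C'$ is homotopic to $\gamma'$, which is a torsor over $\Omega \mathrm{fib}(C \to C')$. Since the hypothesis that $C \to C'$ is $(n+1)$-connected forces $\Omega \mathrm{fib}(C \to C')$ to be $n$-connected, so are $\Delta$ and $\alpha$. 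The main obstacle is verifying the identification of $\alpha$ as the claimed base change; after that, the connectivity bookkeeping is routine.
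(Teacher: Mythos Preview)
Your argument is correct. The factorisation into $\alpha$, $\beta$, $\gamma$ is valid, the identifications of $\beta$ and $\gamma$ as base changes of $A \to A'$ and $B \to B'$ are immediate, and the identification of $\alpha$ as a base change of the relative diagonal $C \to C \times_{C'} C$ is the standard iterated-pullback computation. The connectivity of the relative diagonal follows exactly as you say: its fibres are loop spaces of the fibres of $C \to C'$, hence $(n-1)$-connected as spaces when $C \to C'$ is $(n+1)$-connected, which makes the diagonal $n$-connected as a map.

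The paper, by contrast, dispatches this in one line by appealing to the long exact sequence of homotopy groups: one compares the Mayer--Vietoris sequences
\[
\cdots \to \pi_{i+1}(C) \to \pi_i(A \times_C B) \to \pi_i(A) \times \pi_i(B) \to \pi_i(C) \to \cdots
\]
for the two spans and reads off the result from a five-lemma argument. Your route is more geometric and arguably cleaner in that it never touches homotopy groups directly; it also makes transparent that the statement holds in any $\infty$-topos, since it only uses that $n$-connected maps are stable under base change and composition and that looping drops connectivity by one. The paper's approach is shorter but leaves the basepoint and five-lemma bookkeeping implicit.
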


\begin{proof}
This is immediate from the long exact sequence of homotopy groups. 
\end{proof}

\begin{lemma}
\label{lemma:connectivity_of_finite_limits_of_spaces}
Let $p, q: K \rightarrow \spaces$ be diagrams of spaces indexed by a finite simplicial set $K$, and suppose that $p \rightarrow q$ is a natural transformation such that $p(k) \rightarrow q(k)$ is an $n$-connected map of spaces for each $k \in K$. Then $\varprojlim p \rightarrow \varprojlim q$ is $(n - dim(K))$-connected. 
\end{lemma}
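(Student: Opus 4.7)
The approach I would take is induction on $d = \dim K$, using the preceding \textbf{Lemma \ref{lemma:connectivity_of_maps_between_pullbacks_of_spaces}} to pass through the cells of $K$ in the skeletal filtration. For the base case $d = 0$, the simplicial set $K$ is a finite discrete set and $\varprojlim p \simeq \prod_{k \in K} p(k)$, so the map in question is a finite product of $n$-connected maps of spaces, hence $n$-connected.

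For the inductive step, I would write $K' = \mathrm{sk}_{d-1}(K)$ and let $S$ be the finite set of non-degenerate $d$-simplices of $K$, so that there is a standard pushout square of simplicial sets
\begin{center}
\begin{tikzpicture}
\node (TL) at (0, 1) {$\coprod_{s \in S} \partial \Delta^d$};
\node (TR) at (3, 1) {$K'$};
\node (BL) at (0, 0) {$\coprod_{s \in S} \Delta^d$};
\node (BR) at (3, 0) {$K$.};
\draw [->] (TL) to (TR);
\draw [->] (TL) to (BL);
\draw [->] (TR) to (BR);
\draw [->] (BL) to (BR);
\end{tikzpicture}
\end{center}
Since the left vertical map is a Joyal cofibration, this is also a pushout of $\infty$-categories, so applying $\varprojlim_{(-)} p$ and $\varprojlim_{(-)} q$ converts it into two pullback squares in $\spaces$ linked by a comparison transformation induced by $p \to q$. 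Because $0$ is initial in $\Delta^d$, the limit over $\Delta^d$ is simply evaluation at $0$, so the comparison map between the lower-left corners is a finite product of the $n$-connected maps $p(0_s) \to q(0_s)$, hence itself $n$-connected. Applying the inductive hypothesis to both $K'$ and $\coprod_{s \in S} \partial \Delta^d$, which have dimension at most $d - 1$, the two remaining corner maps are $(n - d + 1)$-connected.

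I would then conclude by feeding these three inputs into \textbf{Lemma \ref{lemma:connectivity_of_maps_between_pullbacks_of_spaces}} with target connectivity $m = n - d$: the two outer maps are at least $m$-connected (one being $(n - d + 1)$-connected, the other $n$-connected) and the middle map is $(m + 1)$-connected, so the lemma gives that the induced map on pullbacks $\varprojlim_K p \to \varprojlim_K q$ is $(n - d)$-connected, completing the induction. The main point requiring care, rather than a genuine obstacle, is verifying that the above pushout of simplicial sets remains a pushout in $\inftycatcat$ so that limits over it are indeed computed as the claimed pullback; this follows from the fact that inclusions of simplicial subsets are Joyal cofibrations. Beyond this, the argument amounts to routine connectivity arithmetic.
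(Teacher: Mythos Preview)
Your proof is correct and follows essentially the same approach as the paper: induction on $\dim K$, the skeletal pushout, the resulting pullback of limits, and an appeal to \textbf{Lemma \ref{lemma:connectivity_of_maps_between_pullbacks_of_spaces}}. The only cosmetic differences are that the paper cites \cite{lurie_higher_topos_theory}[4.4.2.2] for the pullback of limits (where you argue via the Joyal pushout) and uses finality of the last-vertex inclusion $\Delta^0 \hookrightarrow \Delta^d$ rather than initiality of $0$ to identify $\varprojlim_{\Delta^d} p$.
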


\begin{proof}
Let us prove this by induction on the dimension of $K$, the discrete case being clear, since $n$-connected maps are stable under products. If $dim(K) > 0$, then we have a pushout diagram 

\begin{center}
	\begin{tikzpicture}
		\node (TL) at (0, 1.5) {$ \sqcup \partial \Delta^{d} $};
		\node (TR) at (1.5, 1.5) {$ \sqcup \Delta^{d} $};
		\node (BL) at (0, 0) {$ sk_{d-1}K$};
		\node (BR) at (1.5, 0) {$ K $};
		
		\draw [->] (TL) to (TR);
		\draw [->] (TL) to (BL);
		\draw [->] (TR) to (BR);
		\draw [->] (BL) to (BR);		
	\end{tikzpicture},
\end{center}
where $d = dim(K)$. By \cite{lurie_higher_topos_theory}[4.4.2.2], this induces a pullback diagram 

\begin{center}
	\begin{tikzpicture}
		\node (TL) at (0, 1.5) {$ \varprojlim p |_{\sqcup \partial \Delta^{d}} $};
		\node (TR) at (2.3, 1.5) {$ \varprojlim p |_{\sqcup \Delta^{d}}$};
		\node (BL) at (0, 0) {$ \varprojlim p |_{sk_{d-1}K} $};
		\node (BR) at (2.3, 0) {$ \varprojlim p  $};
		
		\draw [->] (TR) to (TL);
		\draw [->] (BL) to (TL);
		\draw [->] (BR) to (TR);
		\draw [->] (BR) to (BL);		
	\end{tikzpicture},
\end{center}
and analogously for $q$. We know that $\varprojlim p |_{sk_{d-1}K} \rightarrow \varprojlim q |_{sk_{d-1}K}$ and $\varprojlim p |_{\sqcup \partial \Delta^{d}} \rightarrow  \varprojlim q |_{\sqcup \partial \Delta^{d}}$ are $(n-d+1)$-connected by the inductive assumption. Moreover, since the inclusion of last vertices $\bigsqcup \Delta^{0} \hookrightarrow \bigsqcup \Delta^{d}$ is a final functor, we see that $\varprojlim p |_{\sqcup \Delta^{d}} \rightarrow \varprojlim q |_{\sqcup \Delta^{d}}$ is $n$-connected. Then, the statement follows from \textbf{Lemma \ref{lemma:connectivity_of_maps_between_pullbacks_of_spaces}}. 
\end{proof}

\begin{lemma}
\label{lemma:connectivity_on_fibres_of_undercategories}
Let $f: \ccat \rightarrow \dcat$ be a locally $n$-connected functor of $\infty$-categories and let $p: K \rightarrow \ccat$ be a diagram. Then, $\ccat _{p/} \times _{\ccat} \{ c \} \rightarrow \dcat_{fp/} \times _{\dcat} \{ f(c) \}$ is an $(n-dim(K))$-connected map of spaces for any $c \in \ccat$. 
\end{lemma}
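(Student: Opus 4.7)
The plan is to reduce the statement to \textbf{Lemma \ref{lemma:connectivity_of_finite_limits_of_spaces}} by identifying the two fibres with finite limits of mapping spaces, under which the natural map becomes the evident pointwise comparison.

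First I would use the standard description of undercategories in $\infty$-categories via the join construction. Recall that an object of $\ccat_{p/}$ is an extension of $p \colon K \rightarrow \ccat$ to $\bar{p} \colon K^{\triangleright} \rightarrow \ccat$, and the projection $\ccat_{p/} \rightarrow \ccat$ is evaluation at the cone point. By the universal property of the join (see \cite{lurie_higher_topos_theory}[4.2.4.1]), the fibre $\ccat_{p/} \times_{\ccat} \{c\}$ is canonically equivalent to the space of cocones under $p$ with tip $c$, which in turn is equivalent to the limit $\varprojlim_{k \in K} \map_{\ccat}(p(k), c)$. Exactly the same identification applies on the $\dcat$-side, giving $\dcat_{fp/} \times_{\dcat} \{f(c)\} \simeq \varprojlim_{k \in K} \map_{\dcat}(fp(k), f(c))$.

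Next, the functor $f$ induces $f_{p/} \colon \ccat_{p/} \rightarrow \dcat_{fp/}$ by postcomposition, and this intertwines the two projections to $\ccat$ and $\dcat$. Under the identifications above, the induced map on fibres is precisely the map of limits arising from the natural transformation of $K$-indexed diagrams of spaces whose component at $k \in K$ is the map
\begin{equation*}
\map_{\ccat}(p(k), c) \longrightarrow \map_{\dcat}(fp(k), f(c))
\end{equation*}
given by $f$. By the hypothesis that $f$ is locally $n$-connected, each of these component maps is $n$-connected.

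Finally, applying \textbf{Lemma \ref{lemma:connectivity_of_finite_limits_of_spaces}} to this pointwise $n$-connected natural transformation of diagrams indexed by the finite simplicial set $K$ yields that the induced map of limits is $(n - \dim(K))$-connected, which is exactly the desired conclusion. The main subtlety I expect is the careful identification of the undercategory fibre with the limit of mapping spaces; everything else is an appeal to the previously established lemma.
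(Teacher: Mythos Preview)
Your proposal is correct and follows essentially the same approach as the paper: identify the fibres $\ccat_{p/}\times_{\ccat}\{c\}$ and $\dcat_{fp/}\times_{\dcat}\{f(c)\}$ with the limits $\varprojlim_{k\in K}\map_{\ccat}(p(k),c)$ and $\varprojlim_{k\in K}\map_{\dcat}(fp(k),f(c))$, observe that the induced map is the limit of a pointwise $n$-connected natural transformation, and apply \textbf{Lemma~\ref{lemma:connectivity_of_finite_limits_of_spaces}}. The only cosmetic difference is in justifying the identification of the fibre with the limit of mapping spaces: the paper enlarges $\ccat$ to have finite colimits and passes through $\map_{\Fun(K,\ccat)}(p,\mathrm{const}(c))\simeq\map_{\ccat}(\varinjlim p, c)$, whereas you invoke the universal property of the join directly.
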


\begin{proof}
We have an equivalence $\ccat _{p/} \times _{\ccat} \{ c \} \simeq \varprojlim _{k \in K} \map_{\ccat}(p(k), c)$ and analogously for the space $\dcat_{fp/} \times _{\dcat} \{ f(c) \}$, so that the statement is immediate from \textbf{Lemma \ref{lemma:connectivity_of_finite_limits_of_spaces}}. To see that such an identification exists, observe that by making $\ccat$ larger if necessary we can assume it has finite colimits. Then, we have 

\begin{center}
$\ccat_{p/} \times \{ c \} \simeq \map_{\Fun(K, \ccat)}(p, const(c)) \simeq \map_{\ccat}(\varinjlim _{k \in K} p(k), c) \simeq \varprojlim _{k \in K} \map_{\ccat}(p(k), c)$,
\end{center}
where we have used that representable functors take colimits to limits. 
\end{proof}

\begin{lemma}
\label{lemma:fibre_sequence_computing_maps_between_cones}
Let $p: K \rightarrow \ccat$ be a diagram indexed by a simplicial set and let $q, q^\prime: K \star \Delta^{0} \rightarrow \ccat$ be two cones under $p$ with cone points $q(\Delta^{0}) = c$ and $q^\prime(\Delta^{0}) = c^\prime$. Then, there's a fibre sequence

\begin{center}
$\map_{\ccat_{p/}}(q, q^\prime) \rightarrow \map_{\ccat}(c, c^\prime) \rightarrow \ccat_{p/} \times _{\ccat} \{ c^\prime \}$
\end{center}
of spaces, where the second map is informally given by sending $f: c \rightarrow c^\prime$ to the cone obtained by composing $q$ with $f$, and the fibre is taken over $\{ q^{\prime} \}$. 
\end{lemma}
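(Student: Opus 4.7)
The plan is to recognise the cone-point evaluation $\pi: \ccat_{p/} \to \ccat$ as a left fibration, by \cite{lurie_higher_topos_theory}[2.1.2.2] applied to the inclusion $\emptyset \hookrightarrow K$, and then to invoke a general fact about left fibrations.

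Namely, for any left fibration $\pi: \dcat \to \ecat$ and objects $d, d' \in \dcat$ with images $c, c' \in \ecat$, there is a natural fibre sequence
$$\map_{\dcat}(d, d') \to \map_{\ecat}(c, c') \to \dcat \times_{\ecat} \{c'\}$$
with fibre over $d'$, whose second map records covariant transport $f \mapsto f_{*} d$, sending a morphism $f: c \to c'$ to the target of a cocartesian lift starting at $d$. I would establish this as a standard consequence of the fact that the induced slice map $\dcat_{d/} \to \ecat_{c/}$ is a trivial Kan fibration (being a left fibration with contractible fibres, the latter property itself an unwinding of the cocartesian lifting property). Pulling back along $\{c'\} \hookrightarrow \ecat$ gives an equivalence $\dcat_{d/} \times_{\ecat} \{c'\} \simeq \map_{\ecat}(c, c')$; using that $\map_{\dcat}(d, d')$ is the fibre of $\dcat_{d/} \to \dcat$ over $d'$, and that the forgetful $\dcat_{d/} \to \dcat$ projects $\dcat_{d/} \times_{\ecat} \{c'\}$ to $\dcat \times_{\ecat} \{c'\}$, one assembles the desired fibre sequence.

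Specialising to $\pi: \ccat_{p/} \to \ccat$ with $d = q$ and $d' = q'$ gives the lemma, provided one identifies the covariant-transport description of the second arrow with the one in the statement. This last identification, which is the main non-routine step, follows from the universal property of the join $K \star \Delta^{1}$: a cocartesian lift of $f: c \to c'$ starting at $q$ corresponds to a map $K \star \Delta^{1} \to \ccat$ extending $p$ on $K$, restricting to $q$ on $K \star \{0\}$ and to $f$ on $\Delta^{1}$, whose restriction to $K \star \{1\}$ is necessarily $f \circ q$. Hence covariant transport of $q$ along $f$ lands at the cone $f \circ q$ with cone point $c'$, as required.
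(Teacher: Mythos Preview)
Your argument is correct. The paper's own proof is simply a citation of \cite{lurie_higher_topos_theory}[5.5.5.12], noting that the argument given there for $K=\Delta^{0}$ works verbatim for general $K$; your approach unpacks and slightly generalises that same argument by isolating the relevant property of the cone-point projection (that it is a left fibration) and deriving the fibre sequence from the trivial-fibration statement $\dcat_{d/} \to \ecat_{c/}$, which is exactly the mechanism behind Lurie's proof. The abstraction to arbitrary left fibrations is a clean repackaging, and your identification of the second arrow via cocartesian transport through $K \star \Delta^{1}$ makes precise what the paper only describes informally.
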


\begin{proof}
This is done in \cite{lurie_higher_topos_theory}[5.5.5.12] in the special case of $K = \Delta^{0}$, but the proof given works equally well in general. 
\end{proof}

\begin{prop}
\label{prop:connectivity_of_functors_between_undercategories}
Let $p: K \rightarrow \ccat$ be a diagram indexed by a finite simplicial set and let $f: \ccat \rightarrow \dcat$ be a locally $n$-connected functor of $\infty$-categories. Then $\ccat_{p/} \rightarrow \dcat_{fp/}$ is locally $(n-dim(K)-1)$-connected. 
\end{prop}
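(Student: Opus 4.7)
The plan is to combine the fibre sequence of Lemma \ref{lemma:fibre_sequence_computing_maps_between_cones} with the connectivity estimate of Lemma \ref{lemma:connectivity_on_fibres_of_undercategories}, and then invoke Lemma \ref{lemma:connectivity_of_maps_between_pullbacks_of_spaces} to control the induced map on fibres.

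More concretely, fix two objects $q, q' \in \ccat_{p/}$, with cone points $c = q(\Delta^{0})$ and $c' = q'(\Delta^{0})$. Applying Lemma \ref{lemma:fibre_sequence_computing_maps_between_cones} to $\ccat$ and $\dcat$ and naturality in $f$ yields a commutative diagram of pullback squares
\begin{center}
\begin{tikzpicture}
\node (A1) at (0,2) {$\map_{\ccat_{p/}}(q,q')$};
\node (B1) at (3,2) {$\map_{\ccat}(c,c')$};
\node (A2) at (0,0.5) {$\{q'\}$};
\node (B2) at (3,0.5) {$\ccat_{p/} \times_{\ccat} \{c'\}$};

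\node (A1p) at (5.2,2) {$\map_{\dcat_{fp/}}(fq, fq')$};
\node (B1p) at (8.8,2) {$\map_{\dcat}(f(c), f(c'))$};
\node (A2p) at (5.2,0.5) {$\{fq'\}$};
\node (B2p) at (8.8,0.5) {$\dcat_{fp/}\times_{\dcat}\{f(c')\}$,};

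\draw [->] (A1) to (B1);
\draw [->] (A1) to (A2);
\draw [->] (B1) to (B2);
\draw [->] (A2) to (B2);

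\draw [->] (A1p) to (B1p);
\draw [->] (A1p) to (A2p);
\draw [->] (B1p) to (B2p);
\draw [->] (A2p) to (B2p);

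\draw [->] (A1) to (A1p);
\draw [->] (B1) to (B1p);
\draw [->] (A2) to (A2p);
\draw [->] (B2) to (B2p);
\end{tikzpicture}
\end{center}
in which the left-hand faces of both squares are fibre sequences, and the whole three-dimensional diagram commutes.

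Thus $\map_{\ccat_{p/}}(q, q')$ and $\map_{\dcat_{fp/}}(fq, fq')$ are computed as pullbacks of cospans $\{q'\} \to \ccat_{p/}\times_{\ccat}\{c'\} \leftarrow \map_{\ccat}(c,c')$ and $\{fq'\} \to \dcat_{fp/}\times_{\dcat}\{f(c')\} \leftarrow \map_{\dcat}(f(c),f(c'))$, and I would apply Lemma \ref{lemma:connectivity_of_maps_between_pullbacks_of_spaces} to the induced natural transformation. The required inputs are: the map on one-point spaces is $\infty$-connected; the map $\map_{\ccat}(c,c') \to \map_{\dcat}(f(c),f(c'))$ is $n$-connected by the hypothesis that $f$ is locally $n$-connected; and the map $\ccat_{p/}\times_{\ccat}\{c'\} \to \dcat_{fp/}\times_{\dcat}\{f(c')\}$ is $(n-\dim(K))$-connected by Lemma \ref{lemma:connectivity_on_fibres_of_undercategories}. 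Setting $m = n - \dim(K) - 1$, the two outer vertical maps are $m$-connected and the middle one is $(m+1)$-connected, so Lemma \ref{lemma:connectivity_of_maps_between_pullbacks_of_spaces} gives that $\map_{\ccat_{p/}}(q,q') \to \map_{\dcat_{fp/}}(fq,fq')$ is $(n - \dim(K) - 1)$-connected, as desired.

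There is no real obstacle: every ingredient has been prepared in the preceding lemmas, and the only bookkeeping is to identify the comparison map on undercategory mapping spaces with a pullback comparison via Lemma \ref{lemma:fibre_sequence_computing_maps_between_cones}, then match the connectivity indexing in Lemma \ref{lemma:connectivity_of_maps_between_pullbacks_of_spaces} (outer $m$-connected plus middle $(m+1)$-connected yields pullback $m$-connected) so as to extract the advertised bound $n-\dim(K)-1$.
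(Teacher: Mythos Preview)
Your proof is correct and follows essentially the same approach as the paper: both use the fibre sequence of Lemma \ref{lemma:fibre_sequence_computing_maps_between_cones} together with the connectivity estimate of Lemma \ref{lemma:connectivity_on_fibres_of_undercategories}. The only cosmetic difference is that the paper invokes the long exact sequence of homotopy groups directly on the resulting map of fibre sequences, whereas you package that step as an application of Lemma \ref{lemma:connectivity_of_maps_between_pullbacks_of_spaces}, whose proof is precisely that long exact sequence argument.
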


\begin{proof}
Let $q, q^\prime: K \star \Delta^{0} \rightarrow \ccat$ be two cones over $p$ with cone points $c$ and $c^\prime$. Then, by \textbf{Lemma \ref{lemma:fibre_sequence_computing_maps_between_cones}} we have a commutative diagram

\begin{center}
	\begin{tikzpicture}
		\node (TL) at (0, 1) {$ \map_{\ccat_{p/}}(q, q^\prime) $};
		\node (TM) at (3, 1) {$ \map(c, c^\prime) $};
		\node (TR) at (6, 1) {$ \ccat_{/p} \times _{\ccat} \{ c^\prime \}$};
		\node (BL) at (0, 0) {$ \map_{\dcat_{fp/}}(fq, fq^\prime)$};
		\node (BM) at (3, 0) {$ \map(f(c), f(c^\prime))$};
		\node (BR) at (6, 0) {$ \dcat_{fp/} \times _{\ccat} \{ f(c^\prime) \} $};
		
		\draw [->] (TL) to (TM);
		\draw [->] (TM) to (TR);
		\draw [->] (BL) to (BM);
		\draw [->] (BM) to (BR);	

		\draw [->] (TL) to (BL);
		\draw [->] (TM) to (BM);
		\draw [->] (TR) to (BR);		
	\end{tikzpicture},
\end{center}
where the rows are fibre sequences. Since the middle map is $n$-connected by assumption, and the right one is $(n-dim(K))$-connected by \textbf{Lemma \ref{lemma:connectivity_on_fibres_of_undercategories}}, we deduce from the long exact sequence of homotopy that the left one is $(n-dim(K)-1)$-connected, which is what we wanted to show. 
\end{proof}

\section{The homotopy category of a stable $\infty$-category}

It is a result of Lurie that if $\ccat$ is a stable $\infty$-category, then its homotopy category $h \ccat$ has a canonical triangulated structure \cite[1.1.2.15]{higher_algebra}. This triangulated structure crucially depends on the stable $\infty$-category, as it is possible to find pairs $\ccat, \dcat$ of stable $\infty$-categories such that $h \ccat \simeq h\dcat$ are equivalent as categories, but not as triangulated categories.

In this appendix, we show that this triangulated structure is in fact already determined by the homotopy $2$-category. In particular, if $h_{2} \ccat \simeq h_{2}\dcat$ is an equivalence between the homotopy $2$-categories of stable $\infty$-categories $\ccat$, $\dcat$, then the induced equivalence $h \ccat \simeq h\dcat$ is triangulated. 

\begin{defin}
Let $\ccat$ be an $\infty$-category. We say that a square $\Delta^{1} \times \Delta^{1} \rightarrow \ccat$ of the form  

\begin{center}
	\begin{tikzpicture}
		\node (TL) at (0, 1.2) {$ A $};
		\node (TR) at (1.2, 1.2) {$ B $};
		\node (BL) at (0, 0) {$ C $};
		\node (BR) at (1.2, 0) {$ D $};
		
		\draw [->] (TL) to (TR);
		\draw [->] (BL) to (BR);
		\draw [->] (TL) to (BL);
		\draw [->] (TR) to (BR);		
	\end{tikzpicture},
\end{center}
is \emph{$n$-cocartesian} if for any object $Z \in \ccat$ the induced diagram of mapping spaces 

\begin{center}
	\begin{tikzpicture}
		\node (TL) at (0, 1.2) {$ \map_{\ccat}(A, Z) $};
		\node (TR) at (3, 1.2) {$ \map_{\ccat}(B, Z) $};
		\node (BL) at (0, 0) {$ \map_{\ccat}(C, Z) $};
		\node (BR) at (3, 0) {$ \map_{\ccat}(D, Z) $};
		
		\draw [->] (TR) to (TL);
		\draw [->] (BR) to (BL);
		\draw [->] (BL) to (TL);
		\draw [->] (BR) to (TR);		
	\end{tikzpicture},
\end{center}
is $n$-cartesian; in other words, that $\map_{\ccat}(D, Z) \rightarrow \map_{\ccat}(B, Z) \times_{\map_{\ccat}(A, Z)} \map_{\ccat}(C, Z)$ is an $n$-connected map of spaces. 
\end{defin}

\begin{lemma}
\label{lemma:being_cocartesian_in_stable_cat_detected_in_homotopy_2_category}
Let $\ccat$ be an $\infty$-category, $f: \Delta^{1} \times \Delta^{1} \rightarrow \ccat$ be a square and let $\pi: \ccat \rightarrow h_{2} \ccat$ denote the projection onto the homotopy $2$-category. If $f$ is cocartesian, then $\pi \circ f$ is $1$-cocartesian. If $\ccat$ is stable, then the converse also holds. 
\end{lemma}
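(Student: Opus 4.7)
The plan is to handle the two implications separately. For the forward direction, recall that mapping spaces in $h_2\ccat$ are the $1$-truncations of mapping spaces in $\ccat$, and that $\tau_{\leq 1}\colon \spaces \to \spaces$ is a left exact localization, so it commutes with finite limits. Hence the map whose $1$-connectedness is tested by $\pi \circ f$ being $1$-cocartesian is exactly $\tau_{\leq 1}$ applied to
\begin{equation*}
\varphi_Z\colon \map_\ccat(D,Z) \to \map_\ccat(B,Z) \times_{\map_\ccat(A,Z)} \map_\ccat(C,Z).
\end{equation*}
If $f$ is cocartesian then $\varphi_Z$ is an equivalence for every $Z$, so its $1$-truncation is an equivalence as well, in particular $1$-connected; hence $\pi \circ f$ is $1$-cocartesian.

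For the converse, I would assume $\ccat$ is stable. Form the pushout $B \sqcup_A C$ in $\ccat$ and let $T$ denote the cofibre of the canonical comparison map $B \sqcup_A C \to D$; by definition $f$ is cocartesian if and only if $T \simeq 0$. Applying $\map_\ccat(-,Z)$ to the cofibre sequence $B \sqcup_A C \to D \to T$ produces, for each $Z$, a fibre sequence whose fibre is $\map_\ccat(T,Z)$ and whose third term is the pullback above; this identifies $\map_\ccat(T,Z) \simeq \mathrm{fib}(\varphi_Z)$. The hypothesis provides that $\tau_{\leq 1}\varphi_Z$ is $1$-connected, i.e.\ has $0$-connected fibre. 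By left exactness of $\tau_{\leq 1}$ this fibre coincides with $\tau_{\leq 1}\mathrm{fib}(\varphi_Z) \simeq \tau_{\leq 1}\map_\ccat(T,Z)$, and since $\tau_{\leq 1}$ preserves $\pi_0$ we conclude $[T,Z] = \pi_0\map_\ccat(T,Z) = 0$ for every $Z \in \ccat$.

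Specialising to $Z = T$ then gives $\mathrm{id}_T = 0$, from which stability forces $T \simeq 0$, so $f$ is cocartesian. The main technical point is the identification $\mathrm{fib}(\tau_{\leq 1}\varphi_Z) \simeq \tau_{\leq 1}\mathrm{fib}(\varphi_Z)$, which is just the left exactness of $1$-truncation on spaces; the rest is bookkeeping with the $\pi_0$ functor. Stability enters essentially, both through the construction of the cofibre $T$ and through the reduction of $T \simeq 0$ to the vanishing of $[T,T]$, and I expect the converse to fail outright without stability.
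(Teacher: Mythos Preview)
Your proof is correct. The forward direction matches the paper's argument essentially verbatim: both use that mapping spaces in $h_{2}\ccat$ are $1$-truncations and that truncation is left exact.

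For the converse the paper takes a different, more direct route. Rather than forming the cofibre $T$ of the comparison map, the paper observes that $1$-cocartesianness of $\pi\circ f$ gives, for every $Z$, an isomorphism on $\pi_{0}$ of the comparison map $\varphi_{Z}$; then it simply replaces $Z$ by its desuspensions $\Sigma^{-k}Z$ to upgrade this to an isomorphism on $\pi_{k}$ for all $k\geq 0$, so $\varphi_{Z}$ is an equivalence and $f$ is cocartesian. Your argument instead packages the failure of cocartesianness into the single object $T$, identifies $\map_{\ccat}(T,Z)$ with the fibre of $\varphi_{Z}$, and kills $T$ by Yoneda. Both approaches exploit stability in an essential way---yours through the existence of the cofibre $T$ and the vanishing criterion $[T,T]=0\Rightarrow T\simeq 0$, the paper's through the availability of arbitrary desuspensions. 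The paper's version is a touch shorter and avoids introducing the auxiliary object $T$; your version has the merit of isolating exactly where stability enters and would adapt more readily to settings where one wants to measure \emph{how far} a square is from being cocartesian.
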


\begin{proof}
First assume that $f$ is cocartesian. Then, for any $Z \in \ccat$, the induced diagram of mapping spaces in $h_{2} \ccat$, which by the definition of the latter is of the form 

\begin{center}
	\begin{tikzpicture}
		\node (TL) at (0, 1.2) {$ \map_{\ccat}(A, Z)_{\leq 1} $};
		\node (TR) at (3, 1.2) {$ \map_{\ccat}(B, Z)_{\leq 1} $};
		\node (BL) at (0, 0) {$ \map_{\ccat}(C, Z)_{\leq 1}$};
		\node (BR) at (3, 0) {$ \map_{\ccat}(D, Z)_{\leq 1} $};
		
		\draw [->] (TR) to (TL);
		\draw [->] (BR) to (BL);
		\draw [->] (BL) to (TL);
		\draw [->] (BR) to (TR);		
	\end{tikzpicture},
\end{center}
is $1$-cartesian, since it is obtained by $1$-truncation from a cartesian diagram. This shows that $\pi \circ f$ is $1$-cocartesian. 

To prove the converse, assume that $\ccat$ is stable and that $\pi \circ f$ is $1$-cocartesian. Then, for any $Z \in \ccat$, the induced $\pi_{0} \map_{\ccat}(D, Z) \rightarrow \pi_{0} (\map_{\ccat}(B, Z) \times _{\map_{\ccat}(A, Z)} \map_{\ccat}(C, Z))$ is an isomorphism. Replacing $Z$ by its desuspensions $\Sigma^{-k} Z$ for $k \geq 1$, we see that the same is true for the higher homotopy groups, proving that $f$ is cocartesian.
\end{proof}

\begin{lemma}
\label{lemma:1cocart_cone_exists_andunique_homotopy_class_of_maps_into_it_from_any_other_cone}
Let $\ccat$ be stable. Then, any span $p$ in $h_{2}\ccat$ can be completed to a $1$-cocartesian square. Moreover, if $q, q_{c}$ are any two completing squares and $q_{c}$ is $1$-cocartesian, then there is a unique homotopy class of maps $q_{c} \rightarrow q$ in $(h_{2}\ccat)_{p/}$. 
\end{lemma}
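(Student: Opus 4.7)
The plan is to handle existence by lifting the given span to $\ccat$ and taking a pushout, then handle uniqueness by combining the fibre sequence from Lemma \ref{lemma:fibre_sequence_computing_maps_between_cones} with the fact that mapping spaces in $h_{2}\ccat$ are $1$-truncated.

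For existence, I would lift the span $p\colon \Lambda_{0}^{2} \to h_{2}\ccat$ to a span $\tilde{p}\colon \Lambda_{0}^{2} \to \ccat$: this is possible because the projection $\pi\colon \ccat \to h_{2}\ccat$ is the identity on objects and surjective on $\pi_{0}$ of each mapping space, so every morphism in $h_{2}\ccat$ admits a representative in $\ccat$. Since $\ccat$ is stable, the span $\tilde{p}$ extends to a pushout square $\tilde{q}_{c}\colon \Delta^{1}\times\Delta^{1} \to \ccat$. By Lemma \ref{lemma:being_cocartesian_in_stable_cat_detected_in_homotopy_2_category}, the image $q_{c} := \pi \circ \tilde{q}_{c}$ is $1$-cocartesian in $h_{2}\ccat$, and by construction it completes $p$.

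For uniqueness, let $c_{c}$ and $c$ denote the cone points of $q_{c}$ and $q$, and let $A, B, C$ denote the three objects of $p$, with $A$ the apex. Applying Lemma \ref{lemma:fibre_sequence_computing_maps_between_cones} inside $h_{2}\ccat$ yields a fibre sequence
\[
\map_{(h_{2}\ccat)_{p/}}(q_{c}, q) \to \map_{h_{2}\ccat}(c_{c}, c) \xrightarrow{\phi} (h_{2}\ccat)_{p/} \times_{h_{2}\ccat} \{c\},
\]
with the fibre taken over the point corresponding to $q$. Unpacking the proof of Lemma \ref{lemma:connectivity_on_fibres_of_undercategories}, the target identifies with the pullback $\map_{h_{2}\ccat}(B, c) \times_{\map_{h_{2}\ccat}(A, c)} \map_{h_{2}\ccat}(C, c)$, and under this identification $\phi$ coincides with the canonical comparison map appearing in the definition of $1$-cocartesianness of $q_{c}$ with $Z = c$. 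Hence by hypothesis $\phi$ is a $1$-connected map of spaces.

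The finishing observation is that since $h_{2}\ccat$ has $1$-truncated mapping spaces, both source and target of $\phi$ are $1$-truncated (the target being a finite limit of $1$-truncated spaces), whence each homotopy fibre of $\phi$ is also $1$-truncated. A map which is simultaneously $1$-connected and has $1$-truncated fibre has contractible fibre, since $\pi_{0}$ and $\pi_{1}$ vanish by $1$-connectedness while $\pi_{k}$ for $k \geq 2$ vanish by $1$-truncatedness. Therefore $\map_{(h_{2}\ccat)_{p/}}(q_{c}, q)$ is contractible, and in particular the set of homotopy classes of maps $q_{c} \to q$ is a single point, as required. The main point is not so much an obstacle as an interplay: the weakened "$1$-cocartesian" condition (only $1$-connectedness rather than an equivalence of comparison maps) is exactly the right amount to trivialize mapping spaces in a $2$-category, precisely because the ambient $1$-truncation already kills everything above level $1$.
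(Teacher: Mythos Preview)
Your argument is correct, and for the uniqueness half it follows a genuinely different route from the paper. The paper lifts both completing squares $q,q_{c}$ to cones $\widetilde{q},\widetilde{q_{c}}\in\ccat_{\widetilde{p}/}$, invokes \textbf{Proposition \ref{prop:connectivity_of_functors_between_undercategories}} (which in turn rests on the chain of connectivity lemmas in the appendix) to show that $\ccat_{\widetilde{p}/}\to(h_{2}\ccat)_{p/}$ is locally $1$-connected, and then uses that $\widetilde{q_{c}}$ is an honest initial object in $\ccat_{\widetilde{p}/}$. You instead stay entirely inside $h_{2}\ccat$: you apply the fibre sequence of \textbf{Lemma \ref{lemma:fibre_sequence_computing_maps_between_cones}} directly there, identify the base with the comparison map appearing in the definition of $1$-cocartesianness, and then observe that a $1$-connected map between $1$-truncated spaces has contractible fibres. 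This is more economical---it bypasses \textbf{Proposition \ref{prop:connectivity_of_functors_between_undercategories}} and the delicate step of lifting cones relative to a fixed $\widetilde{p}$---and in fact yields the stronger conclusion that $\map_{(h_{2}\ccat)_{p/}}(q_{c},q)$ is contractible, not merely connected. The paper's approach, on the other hand, fits into the general framework of comparing undercategories along locally highly connected functors, which is the theme of that appendix.
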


\begin{proof}
From the explicit construction of the homotopy $2$-category given in \cite{lurie_higher_topos_theory}[2.3.4.12], we see that if $dim(K) \leq 2$, then any diagram $K \rightarrow h_{2} \ccat$ lifts to a diagram in $\ccat$. In particular, we can lift $p$ to a span $\widetilde{p}$ in $\ccat$, which since the latter is stable can be completed to a cocartesian diagram. The image of this diagram in $h_{2} \ccat$ is then $1$-cartesian by \textbf{Lemma \ref{lemma:being_cocartesian_in_stable_cat_detected_in_homotopy_2_category}}, ending the proof of the first part. 

Now, suppose we have two cones $q, q_{c} \in (h_{2}\ccat)_{p/}$, with the latter one $1$-cocartesian, and choose lifts $\widetilde{q}, \widetilde{q_{c}} \in \ccat_{\widetilde{p}/}$. Then, since $\pi: \ccat \rightarrow h_{2}\ccat$ is locally $3$-connected, \textbf{Proposition \ref{prop:connectivity_of_functors_between_undercategories}} implies that the induced functor $\ccat_{\widetilde{p}/} \rightarrow (h_{2}\ccat)_{p/}$ is locally $1$-connected, so that 

\begin{center}
$\pi_{0} \map_{\ccat_{\widetilde{p}/}}(\widetilde{q_{c}}, \widetilde{q}) \rightarrow \pi_{0} \map_{(h_{2}\ccat)_{p/}}(q_{c}, q)$
\end{center}
is an isomorphism. It follows that it is enough to show that there is a unique homotopy class of maps $\widetilde{q_{c}} \rightarrow \widetilde{q}$, which is clear from $\textbf{Lemma \ref{lemma:being_cocartesian_in_stable_cat_detected_in_homotopy_2_category}}$, which implies that the cone $\widetilde{q}_{c}$ is cocartesian.
\end{proof}

\begin{cor}
\label{cor:1_cocartesian_cone_in_homotopy_2_cat_unique_up_to_equivalence_well_defined_up_to_htpy}
If $\ccat$ is stable, then the $1$-cocartesian cone of any span in $h_{2} \ccat$ exists and is unique up to an equivalence well-defined up to homotopy. In particular, the vertex of any such cone is unique up to an equivalence well-defined up to homotopy. 
\end{cor}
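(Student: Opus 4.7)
The plan is to deduce the corollary as an essentially formal consequence of \textbf{Lemma \ref{lemma:1cocart_cone_exists_andunique_homotopy_class_of_maps_into_it_from_any_other_cone}}, which already contains the substantive content. Existence of a $1$-cocartesian cone over any given span $p$ in $h_{2}\ccat$ is the first assertion of that lemma, so the only content remaining is the uniqueness clause together with the qualifier "up to homotopy" on the equivalence.

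For uniqueness, I would take two $1$-cocartesian completions $q_{c}, q_{c}'$ of $p$, viewed as objects of $(h_{2}\ccat)_{p/}$. Applying the second clause of the lemma twice --- once with $q_{c}$ playing the role of the $1$-cocartesian cone and $q = q_{c}'$, once with the roles swapped --- produces unique homotopy classes of maps $\alpha \colon q_{c} \to q_{c}'$ and $\beta \colon q_{c}' \to q_{c}$ in $(h_{2}\ccat)_{p/}$. To see that these are mutually inverse, I would apply the same uniqueness assertion once more with $q = q_{c}$: the set $\pi_{0}\map_{(h_{2}\ccat)_{p/}}(q_{c}, q_{c})$ is a singleton, so since $\mathrm{id}_{q_{c}}$ is also a self-map, the composite $\beta \circ \alpha$ must be homotopic to $\mathrm{id}_{q_{c}}$; symmetrically $\alpha \circ \beta$ is homotopic to $\mathrm{id}_{q_{c}'}$. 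This exhibits $q_{c}$ and $q_{c}'$ as equivalent in $(h_{2}\ccat)_{p/}$ via an equivalence which, being a representative of a unique homotopy class, is itself well-defined up to homotopy.

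For the ``in particular'' statement about vertices, I would invoke the evaluation-at-cone-point functor $(h_{2}\ccat)_{p/} \to h_{2}\ccat$. Since this is a functor of $\infty$-categories it preserves equivalences, so the equivalence $\alpha$ constructed above restricts to an equivalence between the vertices of $q_{c}$ and $q_{c}'$ in $h_{2}\ccat$, still well-defined up to homotopy by functoriality. I do not anticipate any real obstacle: the only mild subtlety is to track the uniqueness-up-to-homotopy qualifier carefully through each composition, which is immediate once one observes that all the maps under consideration live in $\pi_{0}$ of the relevant mapping spaces in $(h_{2}\ccat)_{p/}$, a set which the previous lemma has already identified as a singleton in every case relevant here.
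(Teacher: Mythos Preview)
Your proposal is correct and takes essentially the same approach as the paper: the paper's proof simply reads ``This is immediate from \textbf{Lemma \ref{lemma:1cocart_cone_exists_andunique_homotopy_class_of_maps_into_it_from_any_other_cone}}'', and you have spelled out exactly the standard uniqueness-from-universal-property argument that makes this immediacy explicit.
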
 

\begin{proof}
This is immediate from \textbf{Lemma \ref{lemma:1cocart_cone_exists_andunique_homotopy_class_of_maps_into_it_from_any_other_cone}}. 
\end{proof}

\begin{rem}
By \textbf{Corollary \ref{cor:1_cocartesian_cone_in_homotopy_2_cat_unique_up_to_equivalence_well_defined_up_to_htpy}}, the $1$-cocartesian cone is weakly unique, resembling an honest colimit. However, the uniqueness of the latter is stronger - a colimit is unique up to equivalence well-defined up to a contractible space of choices, rather then just up to homotopy. 
\end{rem}

\begin{thm}
\label{thm:triangulated_structure_on_the_homotopy_cat_of_stable_cat_only_depends_on_homotopy_2_cat}
Let $\ccat$ be a stable $\infty$-category. Then, the triangulated structure on $h\ccat$ depends only on the homotopy $2$-category of $\ccat$. 
\end{thm}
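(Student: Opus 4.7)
The plan is to show that both pieces of data comprising the triangulated structure on $h\ccat$ --- namely, the suspension functor $\Sigma$ and the class of distinguished triangles --- can be recovered from $h_2\ccat$ alone. Concretely, I would argue that any equivalence $F: h_2\ccat \simeq h_2\dcat$ of homotopy $2$-categories of stable $\infty$-categories induces an equivalence $h\ccat \simeq h\dcat$ of $1$-categories which carries the suspension functors into one another up to natural isomorphism and sends distinguished triangles to distinguished triangles.

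First I would handle the suspension. The zero object of $\ccat$ is characterized intrinsically in $h_2\ccat$, since an object is zero precisely when every $h\ccat$-morphism into and out of it is unique, a condition visible already in $h\ccat$. Given this, $\Sigma X$ is by construction the pushout of $0 \leftarrow X \rightarrow 0$, and by \textbf{Lemma \ref{lemma:being_cocartesian_in_stable_cat_detected_in_homotopy_2_category}} this pushout corresponds exactly to a $1$-cocartesian cone in $h_2\ccat$. \textbf{Corollary \ref{cor:1_cocartesian_cone_in_homotopy_2_cat_unique_up_to_equivalence_well_defined_up_to_htpy}} produces such a cone, with vertex well-defined up to equivalence unique up to homotopy; choosing one for each $X$ gives an assignment $X \mapsto \Sigma X$ on objects. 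Given $f: X \rightarrow Y$ in $h\ccat$, I would lift to a $1$-morphism in $h_2\ccat$ (possible since $h(h_2\ccat) = h\ccat$) and apply the weak universal property supplied by \textbf{Lemma \ref{lemma:1cocart_cone_exists_andunique_homotopy_class_of_maps_into_it_from_any_other_cone}} to obtain a map $\Sigma f$, unique up to $2$-homotopy, giving a well-defined morphism in $h\ccat$. Functoriality and independence from the chosen lift follow from the same uniqueness statement applied to composites.

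Next I would extract distinguished triangles. A distinguished triangle $X \rightarrow Y \rightarrow Z \rightarrow \Sigma X$ in $h\ccat$ is, up to isomorphism, the outer data of a pasting of two cocartesian squares in $\ccat$: the left one a cofiber square for $X \rightarrow Y$ with corner $Z$, the right one a cofiber square for $Y \rightarrow 0$ with corner $\Sigma X$. Applying \textbf{Lemma \ref{lemma:being_cocartesian_in_stable_cat_detected_in_homotopy_2_category}} to each small square, and using that pasting of $1$-cocartesian squares is again $1$-cocartesian (which follows by composing their weak universal properties), I see that the entire pasting diagram lifts from $h_2\ccat$ to $\ccat$ exactly when it exists in $h_2\ccat$. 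Combined with the identification of $\Sigma$ from the first step, this shows that the class of distinguished triangles in $h\ccat$ is intrinsic to $h_2\ccat$, and in particular is preserved by $F$.

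The main obstacle I anticipate is verifying that the weak universal property of $1$-cocartesian cones, which only supplies morphisms to other cones up to homotopy rather than up to a contractible space of choices, is enough both to promote $X \mapsto \Sigma X$ to an honest endofunctor of $h\ccat$ and to establish naturality of the comparison $F \Sigma \simeq \Sigma F$. The key technical ingredient is \textbf{Proposition \ref{prop:connectivity_of_functors_between_undercategories}}: its use inside \textbf{Lemma \ref{lemma:1cocart_cone_exists_andunique_homotopy_class_of_maps_into_it_from_any_other_cone}} shows that the relevant space of morphisms between cones, while not contractible, does have $\pi_0$ a singleton. This is precisely the amount of coherence needed to descend the construction to the $1$-categorical quotient $h\ccat$ and to make the assignment $\Sigma$ functorial on the nose.
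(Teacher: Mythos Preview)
Your strategy is in the same spirit as the paper's, relying on \textbf{Lemma \ref{lemma:being_cocartesian_in_stable_cat_detected_in_homotopy_2_category}} and \textbf{Lemma \ref{lemma:1cocart_cone_exists_andunique_homotopy_class_of_maps_into_it_from_any_other_cone}} as the engine, but the paper takes a more economical route: rather than reconstructing the suspension functor and then the class of distinguished triangles separately, it simply writes down an alternative criterion (``$1$-distinguished'') for a triangle in $h\ccat$, phrased entirely in terms of diagrams $\Delta^{1}\times\Delta^{2}\to h_{2}\ccat$ with $1$-cocartesian squares, and then proves in one stroke that ``$1$-distinguished'' coincides with ``distinguished''. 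This sidesteps the work you set yourself of promoting $X\mapsto\Sigma X$ to an honest endofunctor of $h\ccat$ and checking naturality of $F\Sigma\simeq\Sigma F$; the suspension and the identification $W\simeq X[1]$ are absorbed into the single diagrammatic criterion.

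There is one genuine gap in your sketch. You assert that a pasting of two $1$-cocartesian squares is again $1$-cocartesian ``by composing their weak universal properties'', and that ``the entire pasting diagram lifts from $h_{2}\ccat$ to $\ccat$ exactly when it exists in $h_{2}\ccat$''. Neither of these follows formally: $1$-connectedness is not stable under the pullbacks involved in composing such squares, and $\Delta^{1}\times\Delta^{2}$ is $3$-dimensional, so an arbitrary diagram of that shape in $h_{2}\ccat$ has no a priori reason to lift. The paper handles exactly this point by observing that $\Delta^{1}\times\Lambda^{2}_{1}\hookrightarrow\Delta^{1}\times\Delta^{2}$ is a categorical equivalence with $2$-dimensional source, so any such diagram in $h_{2}\ccat$ does lift to $\ccat$, where the honest pasting law for pushouts applies and \textbf{Lemma \ref{lemma:being_cocartesian_in_stable_cat_detected_in_homotopy_2_category}} transfers the conclusion back down. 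You would need this same observation (or an equivalent one) to close the argument.
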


\begin{proof}
We first recall Lurie's construction of the triangulated structure on the homotopy category from \cite{higher_algebra}[1.2.2.12, 1.2.2.15]. Namely, one declares a triangle 

\begin{center}
	\begin{tikzpicture}
		\node (L) at (0, 0) {$ X $};
		\node (ML) at (1.5, 0) {$ Y $};
		\node (MR) at (3, 0) {$ Z $};
		\node (R) at (4.5, 0) {$ X[1] $};
		
		\draw [->] (L) to node[auto] {$ f $} (ML);
		\draw [->] (ML) to node[auto] {$ g $} (MR);
		\draw [->] (MR) to node[auto] {$ h $} (R); 	
	\end{tikzpicture}
\end{center}
to be distinguished if there exists a diagram $\Delta^{1} \times \Delta^{2} \rightarrow \ccat$ of the form 

\begin{center}
	\begin{tikzpicture}
		\node (TL) at (0, 0) {$ X $};
		\node (TM) at (1.5, 0) {$ Y $};
		\node (TR) at (3, 0) {$ 0 $};
		\node (BL) at (0, -1) {$ 0 $};
		\node (BM) at (1.5, -1) {$ Z $};
		\node (BR) at (3, -1) {$ W $};
		
		\draw [->] (TL) to node[auto] {$ \widetilde{f} $} (TM);
		\draw [->] (TM) to (TR);
		\draw [->] (BL) to (BM);
		\draw [->] (BM) to node[auto] {$ \widetilde{h} $} (BR);
		
		\draw [->] (TL) to (BL);
		\draw [->] (TM) to node[auto] {$ \widetilde{g} $} (BM);
		\draw [->] (TR) to (BR);
	\end{tikzpicture},
\end{center}
subject to the properties that 

\begin{enumerate}
\item all three squares are cocartesian,
\item $\widetilde{f}$ represents $f$ and $\widetilde{g}$ represents $g$,
\item $h$ is the composite of the homotopy class of $\widetilde{h}$ together with the equivalence $W  \simeq X[1]$ determined by the outer square. 
\end{enumerate}

Let us consider a modified condition, which clearly depends only on the homotopy $2$-category, we will then show that it is equivalent to the one given above. Namely, let us say that a triangle 

\begin{center}
	\begin{tikzpicture}
		\node (L) at (0, 0) {$ X $};
		\node (ML) at (1.5, 0) {$ Y $};
		\node (MR) at (3, 0) {$ Z $};
		\node (R) at (4.5, 0) {$ X[1] $};
		
		\draw [->] (L) to node[auto] {$ f $} (ML);
		\draw [->] (ML) to node[auto] {$ g $} (MR);
		\draw [->] (MR) to node[auto] {$ h $} (R); 	
	\end{tikzpicture}
\end{center}
is $1$-distinguished if and only if there exists a diagram $\Delta^{1} \times \Delta^{2} \rightarrow h_{2}\ccat$ of the form given above, subject to the conditions that 

\begin{enumerate}
[label=($\arabic*^\prime$)]
\item all three squares are $1$-cocartesian,
\item $\widetilde{f}$ represents $f$ and $\widetilde{g}$ represents $g$,
\item $h$ is the composite of the homotopy class of $\widetilde{h}$ together with the equivalence $W  \simeq X[1]$ determined by the outer square,
\end{enumerate}
where we use \textbf{Corollary \ref{cor:1_cocartesian_cone_in_homotopy_2_cat_unique_up_to_equivalence_well_defined_up_to_htpy}} to make sense of the last condition. We claim that a triangle is distinguished if and only if it is $1$-distinguished, proving the theorem. 

Since $\Delta^{1} \times \Lambda^{2}_{1} \hookrightarrow \Delta^{1} \times \Delta^{2}$ is a categorical equivalence and the source is $2$-dimensional, we deduce from  \cite{lurie_higher_topos_theory}[2.3.4.12] that any diagram $\Delta^{1} \times \Delta^{2} \rightarrow h_{2}\ccat$ lifts, up to equivalence, to a diagram in $\ccat$. 

By \textbf{Lemma \ref{lemma:being_cocartesian_in_stable_cat_detected_in_homotopy_2_category}}, a square in $\ccat$ is cocartesian if and only if its image in $h_{2}\ccat$ is $1$-cocartesian, so that a diagram in $\ccat$ satisfies the condition $(1)$ if and only if its image satisfies $(1^\prime)$. Since the same is clearly true for the other two properties, we see that a diagram in the homotopy $2$-category satisfies $(1^\prime) - (3^\prime)$ if and only if all, equivalently any, of its lifts satisfy $(1) - (3)$. This shows that the property of being $1$-distinguished is equivalent to being distinguished, ending the argument.
\end{proof}

\begin{cor}
Let $\ccat, \dcat$ be stable $\infty$-categories and $h_{2}\ccat \simeq h_{2}\dcat$ be an equivalence of their homotopy $2$-categories. Then, the induced equivalence between homotopy categories is triangulated. 
\end{cor}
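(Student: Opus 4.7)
The claim follows almost formally from Theorem~\ref{thm:triangulated_structure_on_the_homotopy_cat_of_stable_cat_only_depends_on_homotopy_2_cat}; the plan is to observe that the characterisation of $1$-distinguished triangles given in the proof of that theorem transports across any equivalence of homotopy $2$-categories. Let me denote the given equivalence by $F: h_{2}\ccat \rightarrow h_{2}\dcat$; further $1$-truncation on mapping spaces induces an equivalence of ordinary homotopy categories $hF: h\ccat \rightarrow h\dcat$, and the goal is to show that $hF$ intertwines the shift functors and preserves distinguished triangles.

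First I would note that the shift functor on each side is determined, up to an equivalence well-defined up to homotopy, by the homotopy $2$-category: by \textbf{Corollary~\ref{cor:1_cocartesian_cone_in_homotopy_2_cat_unique_up_to_equivalence_well_defined_up_to_htpy}}, $X[1]$ is the vertex of the (essentially unique) $1$-cocartesian completion in $h_{2}\ccat$ of the span $0 \leftarrow X \rightarrow 0$. Since the property of a square being $1$-cocartesian is formulated purely in terms of mapping spaces of the homotopy $2$-category being $1$-cartesian diagrams of $1$-truncated spaces, it is preserved and reflected by any equivalence of $2$-categories. Consequently $F$ sends $1$-cocartesian squares to $1$-cocartesian squares, and in particular the canonical natural equivalence $hF \circ [1] \simeq [1] \circ hF$ is induced.

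Now apply the theorem on both sides. A triangle in $h\ccat$ is distinguished if and only if it is $1$-distinguished, meaning it arises from a $\Delta^{1} \times \Delta^{2}$-diagram in $h_{2}\ccat$ satisfying the conditions $(1')$--$(3')$ in the proof of \textbf{Theorem~\ref{thm:triangulated_structure_on_the_homotopy_cat_of_stable_cat_only_depends_on_homotopy_2_cat}}; the same characterisation holds for $h\dcat$. Because $F$ is an equivalence of $h_{2}$-categories, it takes any such witnessing diagram in $h_{2}\ccat$ to a witnessing diagram in $h_{2}\dcat$: it preserves $1$-cocartesianness (condition $(1')$), it preserves the representatives of $f, g$ (condition $(2')$), and it intertwines the vertex-of-outer-square equivalence with $X[1]$ used in condition $(3')$ by the previous paragraph. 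Symmetrically, $F^{-1}$ does the same, so the two classes of $1$-distinguished triangles correspond under $hF$. This exhibits $hF$ as a triangulated equivalence, which is the desired conclusion.

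The argument is essentially bookkeeping; no step should present a real obstacle, provided one is careful that the equivalence chosen on shifts (from \textbf{Corollary~\ref{cor:1_cocartesian_cone_in_homotopy_2_cat_unique_up_to_equivalence_well_defined_up_to_htpy}}) is the \emph{same} canonical one used to formulate condition $(3')$ on both sides. The only mild subtlety is therefore coherence of the chosen shift equivalence across $F$, but since conditions $(1')$--$(3')$ depend on the cocartesian vertex only up to homotopy, this coherence is automatic.
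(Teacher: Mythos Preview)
Your proposal is correct and follows the same approach as the paper: both deduce the corollary directly from \textbf{Theorem~\ref{thm:triangulated_structure_on_the_homotopy_cat_of_stable_cat_only_depends_on_homotopy_2_cat}}. The paper's proof is simply the one-line ``This is immediate from \textbf{Theorem~\ref{thm:triangulated_structure_on_the_homotopy_cat_of_stable_cat_only_depends_on_homotopy_2_cat}}'', whereas you have unpacked what ``immediate'' means by explicitly tracking how an equivalence of homotopy $2$-categories transports the $1$-distinguished triangle characterisation; your version is more detailed but not materially different.
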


\begin{proof}
This is immediate from \textbf{Theorem \ref{thm:triangulated_structure_on_the_homotopy_cat_of_stable_cat_only_depends_on_homotopy_2_cat}}. 
\end{proof}

\bibliographystyle{amsalpha}
\bibliography{chromatic_homotopy_is_algebraic_bibliography}

\end{document}